\numberwithin{equation}{section}
\newcolumntype{L}[1]{>{\raggedright\let\newline\\\arraybackslash\hspace{0pt}}m{#1}}
\newcolumntype{C}[1]{>{\centering\let\newline\\\arraybackslash\hspace{0pt}}m{#1}}
\newcolumntype{R}[1]{>{\raggedleft\let\newline\\\arraybackslash\hspace{0pt}}m{#1}}
\newcommand{\modfour}[1]{\mathrm{(}#1 \mathrm{\ mod\  4)}}
\newtheorem{theorem}{Theorem}[section]
\newtheorem{lemma}[theorem]{Lemma}
\newtheorem{claim}[theorem]{Claim}
\newtheorem{proposition}[theorem]{Proposition}
\title{On $2$-connected graphs avoiding cycles of length $0$ modulo $4$}
\author{Hojin Chu, Boram Park, Homoon Ryu}
\author{Hojin Chu\thanks{Korea Institute for Advanced Study, Seoul 02455, Republic of Korea. Email: hojinchu@kias.re.kr}, \and Boram Park\thanks{Department of Mathematics, Ajou University, Suwon 16499, Republic of Korea. Email:borampark@ajou.ac.kr},
\and Homoon Ryu\thanks{Department of Mathematics, Ajou University, Suwon 16499, Republic of Korea. Email: ryuhomoon@ajou.ac.kr}
}
\date{\today}
\begin{document}

\maketitle 

\begin{abstract} 
For two integers $k$ and $\ell$, an $(\ell \text{ mod }k)$-cycle means a cycle of length $m$ such that $m\equiv \ell\pmod{k}$. In 1977, Bollob\'{a}s proved a conjecture of Burr and Erd\H{o}s by showing that if $\ell$ is even or $k$ is odd, then every $n$-vertex graph containing no $(\ell \text{ mod }k)$-cycles has at most a linear number of edges in terms of $n$. Since then, determining the exact extremal bounds for graphs without $(\ell \text{ mod }k)$-cycles has emerged as an interesting question in extremal graph theory, though the exact values are known only for a few integers $\ell$ and $k$. 
Recently, Gy\H{o}ri, Li, Salia,  Tompkins, Varga and Zhu proved that every $n$-vertex graph containing no $(0 \text{ mod }4)$-cycles has at most $\left\lfloor \frac{19}{12}(n -1) \right\rfloor$ edges, and they provided extremal examples that reach the bound, all of which are not $2$-connected. In this paper, we show that  a $2$-connected graph without $\modfour{0}$-cycles has at most $\left\lfloor \frac{3n-1}{2} \right\rfloor$ edges, and this bound is tight by presenting a method to construct infinitely many extremal examples. 
\end{abstract}

\section{Introduction}
All graphs in the paper are simple. We denote by $e(G)$ the number of edges in a graph $G$.  
For two integers $k$ and $\ell$, an $(\ell \text{ mod }k)$-cycle or $(\ell \text{ mod }k)$-path means a cycle or path of length $m$ such that $m\equiv \ell\pmod{k}$.

Burr and Erd\H{o}s~\cite{BurrErdos} conjectured that if an $n$-vertex graph does not contain an $(\ell \text{ mod }k)$-cycle, where $k\mathbb{Z} + \ell$ contains an even number, then it has at most a linear number of edges in terms of $n$. 
This conjecture was later confirmed by Bollob\'{a}s~\cite{BBollobas}, whose result naturally led to the following question: What is the smallest constant $c_{\ell,k}$, where $k\mathbb{Z} + \ell$ contains an even number, such that every $n$-vertex graph with $c_{\ell,k}n$ edges contains an ($\ell$ mod $k$)-cycle? This problem has received significant attention, and some improvements to the general bounds on $c_{\ell,k}$ have been made. In particular, Sudakov and Verstra\"{e}te~\cite{Sudakov2017} showed that for $3\le \ell <k$, the value of $c_{\ell,k}$ is proportional to the maximum average degree of a $k$-vertex graph without cycles of length $\ell$. This determines $c_{\ell,k}$ up to an absolute constant. Moreover, for even $\ell \ge 4$, determining $c_{\ell,k}$ is as hard as determining the Tur\'{a}n number for even cycles. As a consequence, the exact asymptotics of $c_{\ell,k}$ are known up to constants only for $ \ell \in \{4,6,10\}$ or $\ell=\Omega(\log k)$.

The exact value of $c_{\ell,k}$ is known only for a few specific values of $\ell$ and $k$. It is well known that $c_{0,2} = \frac{3}{2}$. Chen and Saito~\cite{CS94} proved that $c_{0,3} = 2$ and the extremal graphs are $K_{2,n-2}$. Bai, Li, Pan and Zhang~\cite{BAI2025} proved that $c_{1,3} = \frac{5}{3}$, and one vertex identifications of Petersen graphs are extremal graphs. Dean, Kaneko, Ota and Toft~\cite{DKOT}, as well as Saito~\cite{Saito92}, showed that $c_{2,3} = 3$, with extremal graphs being $K_{3,n-3}$.

In the case of $(\ell \bmod 4)$-cycles, there has been substantial recent progress, and several noteworthy results have been announced. In~\cite{GLMX2024}, Gao, Li, Ma and Xie proved that an $n$-vertex graph with at least $\frac{5(n-1)}{2}$ edges contains two consecutive even cycles unless $4 \mid (n-1)$ and every block is isomorphic to $K_5$. This result not only shows that $c_{2,4} = \frac{5}{2}$, but also resolves a special case of a conjecture by Verstra\"{e}te~\cite{V2016} on extremal graphs avoiding $k$ cycles of consecutive lengths. Most recently, the result was extended by Li, Pan and Shi~\cite{LPS2025}, considering Sudakov-Verstra\"{e}te’s conjecture in \cite{Sudakov2017}.

In~\cite{0mod4cycle2025}, Gy\H{o}ri, Li, Salia, Tompkins, Varga and Zhu determined that $c_{0,4} = \frac{19}{12}$, as stated below, and the paper discusses extremal graphs achieving the bound, the examples given in~\cite{0mod4cycle2025} are not $2$-connected.

\begin{theorem}[\cite{0mod4cycle2025}]\label{thm:original}
Let $G$ be an $n$-vertex graph. If $e(G) > \left\lfloor \frac{19}{12}(n -1) \right\rfloor$, then $G$ contains a $\modfour{0}$-cycle.
\end{theorem}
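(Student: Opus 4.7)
The natural approach is to reduce to the 2-connected case via block decomposition. Since every $(0 \bmod 4)$-cycle of $G$ lies inside a single block and the block decomposition satisfies $n - 1 = \sum_i (|V(B_i)| - 1)$ and $e(G) = \sum_i e(B_i)$, it suffices to establish the per-block inequality
\[
e(B) \le \tfrac{19}{12}(|V(B)| - 1)
\]
for every 2-connected graph $B$ containing no $(0 \bmod 4)$-cycle. Summing over blocks and invoking the integrality of $e(G)$ then yields $e(G) \le \lfloor \tfrac{19}{12}(n-1)\rfloor$.

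For the per-block inequality, I would use strong induction on $n_B := |V(B)|$. The base cases $n_B \le 13$ are handled by direct case analysis, noting that $n_B = 4$ already forces a $C_4$, so the first genuinely nontrivial case is $n_B = 5$. For the inductive step with $n_B \ge 14$, I would locate a short ear in an open-ear decomposition of $B$ whose removal leaves a smaller 2-connected (or 2-edge-connected) subgraph to which the hypothesis applies; the forbidden-parity constraint restricts both the possible lengths of ears and how they attach relative to the existing structure. A complementary tool is a BFS layering modulo $4$: after rooting a BFS tree and partitioning vertices by level $\bmod 4$, each non-tree edge closes a short walk whose length modulo $4$ is controlled by the level difference of its endpoints, which should bound the number of non-tree edges.

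The main obstacle is matching the exact constant $19/12$. Because the bound is tight precisely when $n - 1$ is a multiple of $12$, there should be a canonical $13$-vertex, $19$-edge, 2-connected block $H$ avoiding $(0 \bmod 4)$-cycles, with the extremal graphs $G$ being trees of copies of $H$ glued at cut vertices (consistent with the paper's observation that the extremal examples are not 2-connected). Identifying $H$ explicitly (likely assembled from triangles, pentagons, and hexagons arranged so that no cycle has length divisible by $4$) and showing that no larger 2-connected graph beats the $19/12$ ratio is the technical heart of the argument; it requires a delicate case analysis of how short cycles can be combined without producing a cycle of length divisible by $4$.
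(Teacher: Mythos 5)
This statement is Theorem~\ref{thm:original}, which the present paper only quotes from~\cite{0mod4cycle2025} and does not prove, so your outline has to stand on its own. The reduction to blocks is correct but carries essentially no content: the per-block inequality $e(B)\le \frac{19}{12}(|V(B)|-1)$ for a $2$-connected block $B$ without $\modfour{0}$-cycles \emph{is} the theorem (applied to $2$-connected graphs), so the whole burden falls on your inductive proof of it, and that is where the gap lies. Your ear-decomposition step points in the wrong direction: deleting the interior of a final ear of length $\ell$ changes $(e,n)$ to $(e-\ell,\,n-(\ell-1))$, and this preserves the ratio $\frac{19}{12}$ only when $\ell\le \frac{19}{12}(\ell-1)$, i.e.\ $\ell\ge 3$. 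Ears of length $1$ (chords) and $2$ are precisely the operations that raise the density, and the dense extremal blocks --- e.g.\ the $13$-vertex, $19$-edge graph $L_{13}=(F_7;a,b)\uplus(F_8;a,b)$ mentioned in Section~4, whose existence you correctly guessed --- are assembled from such short ears. So the only hard case is a $2$-connected graph whose ear decompositions terminate in short ears, and your sketch offers no mechanism for exploiting the absence of $\modfour{0}$-cycles there; that mechanism is the entire theorem.

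The auxiliary ideas do not close this gap. In a BFS layering, a non-tree edge closes a cycle through two tree paths that may merge anywhere below the root, so its length modulo $4$ is not controlled by the level difference of its endpoints; Bollob\'as-type arguments of this kind give linear bounds with weak constants, not the sharp $\frac{19}{12}$. Similarly, dismissing $n_B\le 13$ as ``direct case analysis'' underestimates the work: the present paper needs computer verification merely for the structured graphs on at most $9$ vertices in Proposition~\ref{prop:gadget}, and your own final paragraph concedes that showing no $2$-connected graph beats the ratio is ``the technical heart'' still to be done. In short, you have a standard and correct reduction plus a good guess at the extremal block, but the proof of the per-block inequality --- the actual content of Theorem~\ref{thm:original} --- is missing, and the induction as proposed fails exactly at chords and $2$-ears.
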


When the known extremal graphs have cut-vertices, it naturally motivates the study of the extremal problem within the class of $2$-connected graphs. This line of research  is considered an interesting and meaningful study. For example, in~\cite{2conn2004}, Fan, Lv and Wang considered the extremal problem about 2-connected graphs avoiding a cycle of length at least $c$ for some given integer $c$, since the extremal graphs avoiding a cycle of length at least $c$ on general graphs have cut-vertices. 

In this paper, we study the extremal problem of avoiding $(0 \text{ mod } 4)$-cycles in the class of $2$-connected graphs. Dean, Lesniak and Saito~\cite{DLS93} showed that every $3$-connected graph always contains a $(0 \text{ mod } 4)$-cycle. As it was noted above, the extremal examples given in~\cite{0mod4cycle2025} are not $2$-connected, which immediately leads to the following question: What is the maximum number of edges in a $2$-connected graph that avoids $(0 \text{ mod } 4)$-cycles? Our main result gives a complete answer to this question as follows. 

\begin{theorem}\label{thm:main}
Let $G$ be a $2$-connected $n$-vertex graph. If $e(G) > \left\lfloor \frac{3n - 1}{2} \right\rfloor$, then $G$ contains a $\modfour{0}$-cycle.
\end{theorem}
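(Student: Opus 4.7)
The plan is to argue by strong induction on $n$, with the small base cases (say $n \le 6$) handled by direct inspection of all 2-connected simple graphs of each small order. For the inductive step, assume the bound for every 2-connected graph on fewer vertices and take a 2-connected $n$-vertex graph $G$ with $e(G) > \lfloor (3n-1)/2 \rfloor$ containing no $\modfour{0}$-cycle; the goal is to derive a contradiction. Since every 3-connected graph contains a $\modfour{0}$-cycle by Dean--Lesniak--Saito~\cite{DLS93}, $G$ must admit a 2-vertex cut $\{u,v\}$, which is where the analysis begins.

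Write $V(G) = V_1 \cup V_2$ with $V_1 \cap V_2 = \{u,v\}$ and neither side lacking internal vertices, set $G_i := G[V_i]$ and $n_i := |V_i|$, so that $n_1 + n_2 = n + 2$ and $e(G) = e(G_1) + e(G_2)$. Attach to each $G_i$ the $(u,v)$-spectrum
\[
\Pi_i \;:=\; \bigl\{\, p \bmod 4 : G_i \text{ has a } u\text{--}v \text{ path of length } p\,\bigr\} \;\subseteq\; \mathbb{Z}/4\mathbb{Z}.
\]
Since every cycle of $G$ is either internal to one $G_i$ or is the concatenation of a $u$--$v$ path in $G_1$ with one in $G_2$, the fact that $G$ is $\modfour{0}$-cycle-free translates into: each $G_i$ is $\modfour{0}$-cycle-free, and $\Pi_1 \cap (-\Pi_2) = \emptyset$. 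In particular $|\Pi_i| \le 3$ for each $i$.

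For each $i$, let $\ell_i$ be the smallest positive integer with $-\ell_i \notin \Pi_i \pmod 4$, and form $G_i^+$ by adjoining to $G_i$ a fresh $u$--$v$ path of length $\ell_i$ through $\ell_i - 1$ new internal vertices. Each $G_i^+$ is a 2-connected graph on $n_i + \ell_i - 1$ vertices with $e(G_i) + \ell_i$ edges, and by construction is still $\modfour{0}$-cycle-free. Applying the inductive hypothesis to $G_i^+$ (strictly smaller than $G$ in all but a few small cases, which are handled by hand) yields
\[
e(G_i) \;\le\; \Bigl\lfloor \tfrac{3n_i + \ell_i - 4}{2} \Bigr\rfloor.
\]
A short case analysis on $\Pi_1, \Pi_2$ subject to $\Pi_1 \cap (-\Pi_2) = \emptyset$ pins down the possible values of $(\ell_1, \ell_2)$; then, summing the two bounds and exploiting the parity savings of the floor function, one checks that $e(G) \le \lfloor (3n-1)/2 \rfloor$ in every parity configuration except the one where $n_1$ and $n_2$ are both odd.

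The hard part will be this residual \emph{both-odd} case, where even the smallest admissible choice of $(\ell_1,\ell_2)$ leaves the naive summation short by a constant, so edge-counting alone cannot close the argument. To beat it, the plan is to combine two refinements. First, choose the 2-cut $\{u,v\}$ so that $|V_1 \setminus \{u,v\}|$ is minimum; when this minimum is very small (one or two internal vertices), $G_1$ is forced into a short explicit list (a single $u$--$v$ path of length $2$ or $3$, possibly with edge $uv$), and in each subcase one can suppress $G_1$, replace it by an ear of a different length tailored to $\Pi_2$, and invoke induction on a properly smaller 2-connected graph. Second, when the minimum is larger, strengthen the inductive hypothesis to track $\Pi(H;x,y)$ for the distinguished pair $(x,y)$ used in the next splitting, so that the defect between $e(H)$ and $\lfloor (3|V(H)|-1)/2 \rfloor$ is bounded below by a penalty depending on $\Pi(H;x,y)$; the point is that two odd-order extremal pieces with spectra maximal under $\Pi_1 \cap (-\Pi_2) = \emptyset$ cannot both saturate the weighted bound, yielding the missing constant. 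Carrying out this residual case analysis rigorously — particularly classifying the minimal blocks at a 2-cut and checking all parity subcases — is where I expect the bulk of the paper's technical content to lie.
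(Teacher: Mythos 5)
Your opening moves match the paper's toolkit: a minimal/inductive setup, the observation via Dean--Lesniak--Saito that $G$ has a $2$-cut, the mod-$4$ path spectra at the cut with the compatibility condition $\Pi_1\cap(-\Pi_2)=\emptyset$, and the device of gluing an ear onto each side so that the induction hypothesis can be applied (the paper does exactly this kind of gluing, with its parallel-sum operation). But there are two genuine gaps. First, the claimed reduction to a single residual ``both $n_i$ odd'' case is not correct. The quality of your bound $e(G_i)\le\lfloor(3n_i+\ell_i-4)/2\rfloor$ depends on $\ell_i$, and $\ell_1=\ell_2=1$ is available only for favourable spectra. For instance, when $\Pi_1=\{0,3\}$ and $\Pi_2=\{2,3\}$ (a configuration compatible with $\Pi_1\cap(-\Pi_2)=\emptyset$, and precisely the hard case in the paper), the smallest admissible ear lengths are $\ell_1=2$ and $\ell_2=3$, and the summed bound exceeds $\lfloor(3n-1)/2\rfloor$ for every parity of $n_1,n_2$. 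So path-ear attachment alone is too weak: one needs to attach denser $\modfour{0}$-cycle-free gadgets of prescribed type (the paper's $F_4,F_6,F_7,F_8,F_9$) and, crucially, an extremal classification of small two-terminal pieces (the paper's Proposition~2.3, verified partly by computer) to squeeze out the missing constants. Second, even granting that, your treatment of the residual configurations is only a plan (choose a minimal $2$-cut; strengthen the induction with a spectrum-dependent penalty), not a proof; this is exactly where the bulk of the difficulty sits, and your two refinements are not shown to close it.

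It is also worth knowing that the paper does not, in the end, close the argument by summing edge counts at a $2$-cut. It proves structural lemmas about every $2$-cut of a minimal counterexample (one side must be $K_3$, $F_3$, or $F_4$), and then finishes by a completely different mechanism: non-planar graphs contain $\modfour{0}$-cycles, so $G$ is planar, and bounding the number of $3$-faces (at most two) and $5$-faces (at most five) lets Euler's formula give $e(G)\le\lfloor(3n-1)/2\rfloor$. If you want to pursue a purely inductive proof at the cut, you would have to supply substitutes for all of that machinery; as written, the proposal's endgame is missing.
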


The bound in Theorem~\ref{thm:main} is tight in the sense that we can construct infinitely many $2$-connected $n$-vertex graphs with exactly $\left\lfloor \frac{3n - 1}{2} \right\rfloor$ edges that contain no $\modfour{0}$-cycles, when $n\ge 12$ (see Section~4). 

The paper is organized as follows. Section 2 collects some definitions, observations, and states some known propositions. Section 3 gives a proof of Theorem~\ref{thm:main}, and Section 4 describes the extremal graphs.

\section{Preliminaries}

\subsection{Basic definitions and notations}

For a path $P: v_0v_1 \cdots v_m$, the subpath $v_iv_{i+1}\cdots v_j$ of $P$ is denoted by $P[v_i,v_j]$. 
When $C: v_0v_1\cdots v_mv_0$ is a cycle, the path $v_i v_{i+1}\cdots v_{i+j}$, where the subscripts are taken modulo $m+1$, is denoted by $C[v_{i},v_{i+j}]$. 
For a walk $W:v_0v_1 \cdots v_m$, $\overleftarrow{W}$ means a walk $v_m\cdots v_1v_0$ obtained by reversing $W$. The length of a walk $W$ is the number of edges in $W$, denoted by $\ell(W)$.
For two walks $P$ and $Q$, if the terminal vertex of $P$ and the initial vertex of $Q$ are the same, then $P+Q$ denotes a walk along $P$ and $Q$. For two vertex sets $X$ and $Y$ of a graph, a path $P$ is called an {\it $(X,Y)$-path} if one end of $P$ is contained in $X$, the other is contained in $Y$, and no interior vertex is contained in $X\cup Y$. When $X$ or $Y$ is a singleton, we drop the set notation from an $(X,Y)$-path for convenience. For example, if $X=\{x\}$ and $Y=\{y\}$, we call it an $(x,y)$-path. For nonempty subsets $X$ and $Y$ of $V(G)$,
we say that a set $S$ of vertices {\it separates} $X$ and $Y$ if every $(X,Y)$-path contains a vertex of $S$, and that $S$ is an {\it $(X,Y)$-separating set}. The following is a well-known Menger's Theorem. 

\begin{theorem}[\!\!{\cite[Theorem~2.2]{OBW13}}] \label{thm:menger}
Let $G$ be a graph, $X$ and $Y$ be subsets of $V(G)$. For every positive integer $k$, there are $k$ pairwise vertex-disjoint $(X,Y)$-paths in $G$ if and only if every $(X,Y)$-separating set contains at least $k$ vertices. 
\end{theorem}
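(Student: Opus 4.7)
The plan is to prove Menger's theorem by induction on $|E(G)|$. The forward implication is immediate: if $P_1,\dots,P_k$ are pairwise vertex-disjoint $(X,Y)$-paths and $S$ is any $(X,Y)$-separating set, then each $P_i$ meets $S$, and the vertex-disjointness forces $|S|\ge k$. So I focus on the converse: assuming every $(X,Y)$-separating set has size at least $k$, I exhibit $k$ pairwise vertex-disjoint $(X,Y)$-paths.

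For the base case $|E(G)|=0$, every $(X,Y)$-path is a single vertex of $X\cap Y$; since $X\cap Y$ itself is an $(X,Y)$-separator, $|X\cap Y|\ge k$ and these $k$ vertices serve as the required (trivial) paths. For the inductive step, pick any edge $e=uv$. If every $(X,Y)$-separating set of $G-e$ still has size at least $k$, the inductive hypothesis applied to $G-e$ immediately yields the required paths. Otherwise $G-e$ admits an $(X,Y)$-separating set $S$ with $|S|\le k-1$; in fact $|S|=k-1$ and $u,v\notin S$, for otherwise $S$ would already separate $X$ from $Y$ in $G$, contradicting the hypothesis.

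Using $S$, I split the problem. Let $V_X$ be the set of vertices reachable from $X$ in $(G-e)-S$, and set $V_Y=V(G)\setminus(S\cup V_X)$. Then $V_X,V_Y,S$ partition $V(G)$, and the edge $e$ must cross the partition (otherwise $S$ would already separate $X$ and $Y$ in $G$), so without loss of generality $u\in V_X$ and $v\in V_Y$; moreover $V_Y$ is nonempty because $Y$ is itself an $(X,Y)$-separator of size at least $k>|S|$, so $Y\not\subseteq S$. Form the induced subgraphs $G_1=G[V_X\cup S]$ and $G_2=G[V_Y\cup S]$; each has strictly fewer edges than $G$ since the crossing edge $e$ is missing from both. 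The key claim is that every $(X,S\cup\{u\})$-separating set of $G_1$ has size at least $k$: given $T\subseteq V(G_1)$ with $|T|<k$, one traces an arbitrary $(X,Y)$-path $P$ of $G$ up to its first entry into $S\cup\{u\}$ (splitting on whether or not $P$ uses the edge $e$), shows that this initial segment lies entirely in $G_1$, and concludes that $T$ meets $P$, so $T$ would be an $(X,Y)$-separating set of $G$ of size $<k$, contradicting the hypothesis. The analogous claim holds for $G_2$ with respect to $(S\cup\{v\},Y)$-separating sets.

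Applying the inductive hypothesis to $G_1$ and $G_2$ then produces $k$ pairwise vertex-disjoint $(X,S\cup\{u\})$-paths $P_1,\dots,P_k$ in $G_1$ and $k$ pairwise vertex-disjoint $(S\cup\{v\},Y)$-paths $Q_1,\dots,Q_k$ in $G_2$. Because $|S\cup\{u\}|=|S\cup\{v\}|=k$, the endpoints of the $P_i$ cover $S\cup\{u\}$ exactly once, and similarly for the $Q_j$. Concatenating each $P_i$ and $Q_j$ meeting at a common vertex $s\in S$, and joining the unique $P_i$ ending at $u$ with the unique $Q_j$ starting at $v$ via the edge $e$, produces $k$ pairwise vertex-disjoint $(X,Y)$-paths in $G$, completing the induction. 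I expect the main obstacle to be the careful verification of the separator-lifting claim for $G_1$ and $G_2$: one must check that an arbitrary $(X,Y)$-path of $G$ always has a suitable initial segment lying entirely within the chosen subgraph, which is where the case analysis on the use of $e$ together with the exact definitions of $V_X$ and $V_Y$ must be handled with care.
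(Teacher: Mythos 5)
Your proposal is a correct, self-contained proof of the set version of Menger's theorem, whereas the paper does not prove this statement at all: it is quoted as Theorem~2.2 of the cited survey chapter and used as a black box. What you give is essentially the classical induction on $|E(G)|$ (delete an edge $e=uv$, extract a small separator $S$ of $G-e$, split, and glue), in a mild variant: instead of working inside $G-e$ and finding $k$ disjoint $(X,S\cup\{u\})$-paths there, you pass to the induced pieces $G_1=G[V_X\cup S]$ and $G_2=G[V_Y\cup S]$ defined by reachability from $X$ in $(G-e)-S$. That variant is fine, because the only edge of $G$ joining $V_X$ to $V_Y$ is $e$ itself, so $X\subseteq V_X\cup S$, $Y\subseteq V_Y\cup S$, both pieces miss $e$, and your separator-lifting claim goes through exactly as you sketch (the delicate subcase being a path that traverses $e$ from the $v$-side, which is ruled out by the reachability definition of $V_X$). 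Two small points to tighten in a written-out version: (i) your stated reason for $|S|=k-1$ (``otherwise $S$ would already separate $X$ from $Y$ in $G$'') only justifies $u,v\notin S$; the size claim should instead come from observing that $S\cup\{u\}$ and $S\cup\{v\}$ are $(X,Y)$-separating sets of $G$, hence have at least $k$ vertices (equivalently, it also falls out of your lifting claim, since $S\cup\{u\}$ separates $X$ from $S\cup\{u\}$ in $G_1$); (ii) in the gluing step you should note that, because the $k$ disjoint paths must hit the $k$-element sets $S\cup\{u\}$ and $S\cup\{v\}$ in $k$ distinct vertices, each $P_i$ meets $S\cup\{u\}$ only in its terminal vertex and each $Q_j$ meets $S\cup\{v\}$ only in its initial vertex, which is what guarantees the concatenations are vertex-disjoint paths (trimming, if needed, to make them $(X,Y)$-paths in the paper's strict sense). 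With those remarks added, the argument is complete.
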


The following is a folklore.  

\begin{proposition}\label{prop:folklore}
Let $G$ be a $2$-connected graph, $X = \{x, y \}$ be a vertex cut of $G$, and  $S$ be 
a connected component of $G-X$. If every $(x,y)$-path in $G[V(S) \cup X]$ has the same parity,  then $G[V(S) \cup X]$ is bipartite. 
\end{proposition}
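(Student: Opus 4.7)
The plan is to argue by contrapositive: assume that $H := G[V(S) \cup X]$ contains an odd cycle $C$, and construct two $(x,y)$-paths in $H$ of different parity. Two structural observations will be used throughout, both following from the 2-connectedness of $G$. First, each of $x$ and $y$ has a neighbor in $V(S)$ (otherwise that vertex would be a cut vertex of $G$); together with the connectedness of $G[V(S)]$, this makes both $H - x$ and $H - y$ connected. Second, writing $H_2 := G[(V(G) \setminus V(S)) \cup X]$, I note the following crossing observation: any simple path in $G$ with one endpoint at $y$ and the other in $V(S)$ that uses a vertex of $V(H_2) \setminus X$ must contain $x$ in its interior, because $X = \{x,y\}$ is the sole interface between $V(S)$ and $V(H_2) \setminus X$ and $y$ is already an endpoint.

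The argument then splits according to $|V(C) \cap \{x, y\}|$. If both $x, y \in V(C)$, the cycle $C$ decomposes into two internally disjoint $(x,y)$-paths whose lengths sum to the odd number $|C|$, forcing different parities. If exactly one of $x,y$ lies on $C$, say $x \in V(C)$ and $y \notin V(C)$, I use the connectedness of $H - x$ to pick a shortest $(y, V(C) \setminus \{x\})$-path $R$ in $H - x$, which meets $C$ at a single vertex $v$; concatenating $R$ with each of the two arcs of $C$ joining $x$ to $v$ yields two $(x, y)$-paths in $H$ whose lengths differ by $|C|$ and therefore have opposite parity. By symmetry (swapping $x$ and $y$), the case $y \in V(C)$, $x \notin V(C)$ is handled identically.

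The main obstacle is the remaining case, where neither $x$ nor $y$ lies on $C$. Here I invoke the fan lemma (a standard corollary of Theorem~\ref{thm:menger}) in the 2-connected graph $G$: since $|V(C)| \ge 3$, there exist two internally disjoint paths $R_1, R_2$ in $G$ from $y$ to distinct vertices $w_1, w_2 \in V(C)$, each meeting $V(C)$ only at its endpoint. The delicate point is that these paths a priori live in $G$ and not in $H$. By the crossing observation, any $R_i$ that leaves $H$ must use $x$ internally, so at most one of them does. If both $R_1, R_2$ lie in $H$, I combine them with the two arcs of $C$ between $w_1$ and $w_2$ to produce two cycles through $y$ whose lengths differ by $|C|$; at least one of them is an odd cycle $D \subset H$ containing $y$, and the preceding case applied to $D$ finishes the argument. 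If instead $R_1$ leaves $H$, the crossing observation places $x$ on $R_1$, so the sub-path $R_1[x, w_1]$ lies in $H$; since $x \notin R_2$ (as $R_1, R_2$ are internally disjoint and $x$ is not an endpoint of $R_2$), $R_1[x, w_1]$ is disjoint from $R_2$, and concatenating $R_1[x, w_1]$, either arc of $C$ from $w_1$ to $w_2$, and $\overleftarrow{R_2}$ produces two $(x, y)$-paths in $H$ whose lengths differ by $|C|$, hence of opposite parity, giving the required contradiction.
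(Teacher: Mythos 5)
Your proof is correct, but it takes a genuinely different route from the paper's. The paper gets the conclusion from a single uniform observation: applying Theorem~\ref{thm:menger} to $(X,\{u,v\})$-paths shows that every edge of $H=G[V(S)\cup X]$ other than $xy$ lies on an $(x,y)$-path of $H$; then, given an odd cycle $C$ (necessarily with $xy\notin E(C)$), it takes one such path $P$ through an edge of $C$, lets $w_1,w_2$ be the first and last vertices of $P$ on $C$, and reroutes $P$ along the two arcs $C[w_1,w_2]$ and $\overleftarrow{C}[w_1,w_2]$ to get two $(x,y)$-paths of different parity --- no case distinction on where $x,y$ sit relative to $C$. You instead split into cases according to $|V(C)\cap\{x,y\}|$, using connectivity of $H-x$ and $H-y$ when exactly one of $x,y$ is on $C$, and a $y$-fan to $V(C)$ in $G$ together with your ``crossing observation'' when neither is, pulling the fan paths back into $H$. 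Your argument is sound: the only step stated a bit tersely is ``the crossing observation places $x$ on $R_1$, so the sub-path $R_1[x,w_1]$ lies in $H$'' --- the observation itself only yields $x\in V(R_1)$, and you should add the one-line remark that after $x$ the path cannot re-enter $V(H_2)\setminus X$ because there are no edges between $V(H_2)\setminus X$ and $V(S)$ and both vertices of $X$ are already used. What the comparison buys: the paper's lemma-first approach is shorter and reuses Menger only once, in a form (every edge of $H$ lies on an $(x,y)$-path of $H$) that is also implicitly convenient elsewhere in the paper when Proposition~\ref{prop:gadget} is applied to $G[V(S)\cup X]$; your fan-plus-separation argument is more hands-on and makes explicit why paths of $G$ can be rerouted to stay inside $H$, at the cost of a four-way case analysis.
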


\begin{proof}
Let $H = G[V(S) \cup X]$.
By Theorem~\ref{thm:menger}, for each edge $e=uv$ in $H$, when $e\neq xy$, there are two vertex-disjoint $(X,\{u,v\})$-paths $P$ and $Q$ in $G$ since $G$ is $2$-connected. 
Since $X=\{x,y\}$ is a vertex cut of $G$, $P$ and $Q$ must be paths in $H$. Hence, for an edge $e$ in $H$, if $e\neq xy$, then there is an $(x,y)$-path in $H$ containing the edge $e$.

Suppose to the contrary that $H$ has an odd cycle $C$ though every $(x,y)$-path in $H$ has the same parity. Then $xy\not\in E(C)$ even if $xy$ is an edge of $G$.
By the previous paragraph, there is an $(x,y)$-path  $P$ in $H$ containing an edge of $C$. 
Let $w_1$ and $w_2$ be the first and the last vertices of $P$ that are also in $V(P)\cap V(C)$. Then consider two $(x,y)$-paths $Q_1: P[x,w_1]+C[w_1,w_2]+P[w_2,y]$ and $Q_2: P[x,w_1]+\overleftarrow{C}[w_1,w_2]+P[w_2,y]$.
Clearly $\ell(Q_1)$ and $\ell(Q_2)$ have different parities, which is a contradiction.
\end{proof}

\subsection{Gadgets and small  graphs} 
For two vertices $u$ and $v$ of a graph $G$ and a nonempty set $L$ of nonnegative integers, we say a pair $(u,v)$ is an {\it $L$-type} in $G$ if for every $(u,v)$-path $P$ of $G$, $\ell(P) \equiv t \pmod{4}$ for some $t \in L$. We define six graphs $F_3$,  $F_4$, $F_6$, $F_7$, $F_8$, and $F_9$ in Figure~\ref{fig:gadget} with two specified vertices $a$ and $b$. 
It is clear that $(a,b)$ is a $\{2\}$-type in $F_3$. One can also observe that $(a,b)$ is a $\{0,3\}$-type in $F_6$, a $\{0,1\}$-type in $F_7$, a $\{1,2\}$-type in $F_8$, and a $\{2,3\}$-type in both $F_4$ and $F_9$.  Those six graphs $F_i$'s will play a key role in our proofs.

\begin{figure}[!ht]
\centering
\includegraphics[page = 1,height=4.2cm]{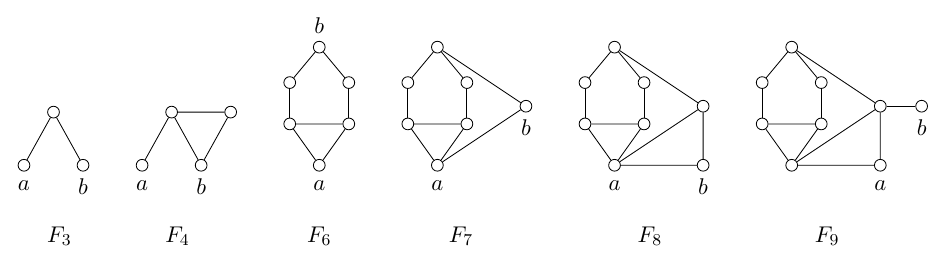}
\caption{Graphs $(F_i;a,b)$} \label{fig:gadget}
\end{figure}

We denote by $(G;x,y)$ a graph $G$ with two specified vertices $x$ and $y$.
Given two graphs $(G; x,y)$ and $(H; a,b)$, the {\it  parallel sum} of $(G;x,y)$ and $(H;a,b)$ is the graph obtained from $G\cup H$ by identifying $x$ and $a$, identifying $y$ and $b$, and then deleting a loop or multiple edges.
We denote it by $(G;x,y)\uplus (H;a,b)$. 
If $xy\in E(G)$ or $ab\in E(H)$, then the identified vertices are also adjacent in $(G;x,y)\uplus (H;a,b)$.  
If $x=y$ or $a=b$, then $(G;x,y)\uplus (H;a,b)$ is the graph obtained from $G\cup H$ by identifying all the vertices $x,y,a,b$ and deleting a loop or multiple edges.  
When $P$ is a path and the ends of $P$ are $a$ and $b$, we simply denote $(G;x,y)\uplus (P;a,b)$ by $(G;x,y)\uplus P$. 

For a vertex cut $\{x,y\}$ of a graph $G$, a connected component $S$ of $G-\{x,y\}$,
a \textit{reverse} of $G$ at $\{x,y\}$ with $S$ means a graph $H=(H_1;x,y)\uplus (H_2;y,x)$, where
$H_1=G[V(S)\cup \{x,y\}]$ and $H_2=G-V(S)$. 
We also say $H$ is obtained by \textit{reversing} $G$. 
Note that for a set $X$ of size two, $X$ is a vertex cut in $G$ if and only if $X$ is a vertex cut in $H$. If a graph $H$ can be obtained from $G$ by reversing a finite number of times, we say that $H$ is \textit{reserving-equivalent} to $G$. The two graphs in Figure~\ref{fig:reversing}  are reversing-equivalent to $F_9$ in Figure~\ref{fig:gadget}.
From the definition, 
when $G$ and $H$ are reversing-equivalent, it is clear that $e(G)=e(H)$, and $G$ has a cycle of length $\ell$ if and only if $H$ has a cycle of length $\ell$.
  
\begin{figure}[ht]
  \centering
\includegraphics[page=5,height=3cm]{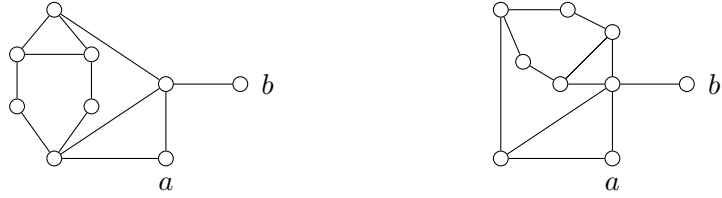}
\caption{Graphs that are reversing-equivalent to $F_9$ in Figure~\ref{fig:gadget}}\label{fig:reversing}
\end{figure}

We finish the section with observations on small graphs, whose proof is given in the appendix.
\begin{proposition}\label{prop:gadget}
Let $n\ge 3$ and let $(H;x,y)$ be an $n$-vertex graph without  $\modfour{0}$-cycles such that every edge in $H$ is contained in an $(x,y)$-path of $H$. 
If $(x,y)$ is an $L$-type in $H$, then \[
e(H) \le 
\begin{cases}
\frac{3n-4}{2} & \text{if } L=\{0,3\} \text{ and }  n \le 6;\\
\frac{3n-3}{2} & \text{if } L=\{0,1\} \text{ and }   n \le 7;\\
\frac{3n-2}{2} & \text{if } L=\{1,2\} \text{ and }   n \le 8;\\
\frac{3n-3}{2} & \text{if } L=\{2,3\} \text{ and }   n \le 9.
\end{cases}
\]
Moreover, if the equality holds, then $H$ is reversing-equivalent to $F_n$ in Figure~\ref{fig:gadget} for some $n\in\{ 6,7,8,9\}$.
\end{proposition}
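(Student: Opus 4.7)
The plan is to proceed by induction on $n$ within each of the four $L$-cases, with the base cases $n=3,4$ (and $n=5$ where needed) settled by direct enumeration of small $2$-connected graphs satisfying the hypotheses. The basic arithmetic engine is this: if $P_1$ and $P_2$ are internally vertex-disjoint $(x,y)$-paths of lengths $a$ and $b$, then their union is a cycle of length $a+b$, so $a+b\not\equiv 0\pmod 4$. Combined with $a,b\in L\pmod 4$, this rules out certain length-combinations and forces structural restrictions on $H$. For instance, when $L=\{0,3\}$, at most one internally vertex-disjoint $(x,y)$-path can have length $\equiv 0\pmod 4$, with analogous statements for the other $L$.

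For the inductive step I would split on the connectivity of $H$. If there is a small vertex cut $\{u,v\}$ of $H$, either separating $\{x,y\}$ from some component or separating $x$ from $y$, write $H$ as a parallel sum $(H_1;u,v)\uplus(H_2;u,v)$. Using the fact that every $(x,y)$-path of $H$ passes through $\{u,v\}$ and splits accordingly, determine the induced $L$-types of $(u,v)$ in $H_1$ and $H_2$ from the $L$-type of $(x,y)$ in $H$, apply the inductive hypothesis to each piece, and sum the edge-counts to verify the target bound $\tfrac{3n-c}{2}$. If no such cut exists, Menger's theorem produces several internally vertex-disjoint $(x,y)$-paths, and a length-parity analysis forces either a $(0\bmod 4)$-cycle or a very small $n$ amenable to direct check.

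For the equality characterization, I would track when each inequality in the inductive step is tight. Equality in the edge-count sum forces both parallel-sum pieces to be extremal for their induced $L$-types, so by induction each piece is reversing-equivalent to some smaller $F_m$ or to a degenerate base case. The admissible $L$-type pairings $(L_1,L_2)$ across the cut are few, and reversing one piece of the parallel sum corresponds exactly to a reversing operation on $H$; so every tight $H$ turns out to be reversing-equivalent to $F_n$.

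The main obstacle I expect is the case bookkeeping: each of the four values of $L$ admits several admissible decompositions $(L_1,L_2)$ at a small cut, and for each combination one must verify both the edge bound and that sharpness occurs only via $F_n$. Since $n\le 9$, the check is finite but delicate, and the hardest sub-cases are those where $L$ contains an element of $\{0,2\}$ (so two $(x,y)$-paths of that residue pair into a forbidden cycle), which severely limits internally disjoint paths and forces $H$ to be nearly tree-like around $\{x,y\}$.
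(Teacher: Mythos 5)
Your outline diverges from the paper's proof (which, after a pendant-edge reduction, analyzes a longest cycle through $x$ and $y$ and bounds its chords and ears, with the tedious small cases checked by computer), and as written it has two genuine gaps. First, the branch ``no small cut, so Menger gives several internally disjoint $(x,y)$-paths and parity forces a $\modfour{0}$-cycle or a very small $n$'' does not go through: three internally vertex-disjoint $(x,y)$-paths with lengths in $L$ need not create a $\modfour{0}$-cycle, since only the even residue of $L$ is self-forbidden. For instance, for $L=\{2,3\}$ residues $3,3,3$ or $2,3,3$ are perfectly consistent, and for $L=\{0,3\}$ residues $3,3,3$ are consistent; all you get is that at most one disjoint path carries the even residue. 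Moreover, since the proposition only concerns $n\le 9$, deferring to ``direct check for small $n$'' is not a reduction at all --- this highly connected branch is exactly where the structural work lives (it is where the near-extremal configurations and the equality characterization via $F_6$--$F_9$ have to be established), and your proposal contains no argument for it, whereas the paper spends most of its appendix on precisely this analysis (longest cycle of length $5$--$9$ through $x,y$, counting chords and attachments of outside vertices).

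Second, the decomposition branch does not preserve the inductive invariant when the $2$-cut $\{u,v\}$ is not $\{x,y\}$. The piece not containing $x,y$ does inherit the hypothesis with respect to $(u,v)$, but the piece containing $x$ and $y$ need not: an edge of that piece is guaranteed only to lie on an $(x,y)$-path of $H$, and that path may detour through the other side, so within the piece the edge may lie on no $(x,y)$-path; also the pair whose type you can control there is $(x,y)$, not $(u,v)$, so ``apply the inductive hypothesis to each piece'' is not licensed without a substitution step (replacing the far side by a short path or gadget realizing the same type, as the paper does in Section~3), and that substitution changes the vertex count and needs the half-integral bounds tracked carefully. Two further points need attention: pieces can have types $\{0,2\}$, $\{1,3\}$ or singletons, which are outside the four listed cases (these are bipartite situations and can be dispatched via Theorem~\ref{thm:planar}~(iii), but you must say so); and $H$ need not be $2$-connected at all --- pendant edges at $x$ or $y$ and cut vertices are allowed by the hypothesis, so base-case ``enumeration of small $2$-connected graphs'' misses admissible graphs, which the paper handles by an explicit pendant-edge reduction shifting the type from $\{i,i+1\}$ to $\{i-1,i\}$.
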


\subsection{Odd cycles in a  graph without $\modfour{0}$-cycles}
  
This subsection gathers some results of previous work in \cite{DLS93,0mod4cycle2025}.

\begin{theorem}[\!\!\cite{DLS93}]\label{thm:3con}   Every $3$-connected graph has a $\modfour{\rm{0}}$-cycle.
\end{theorem}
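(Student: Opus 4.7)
The plan is to argue by contradiction: suppose $G$ is a $3$-connected graph containing no $\modfour{0}$-cycle, chosen as a minimal counterexample. Since $G$ is $3$-connected, every vertex has degree at least $3$, and any two vertices of $G$ are joined by three internally vertex-disjoint paths, by Theorem~\ref{thm:menger}. I would first dispense with the small cases by direct inspection: every $3$-connected graph on at most six vertices contains $C_4$, since $K_4$, $K_5$, $K_{3,3}$, and the $3$-prism all contain $4$-cycles, and these cover the small $3$-connected graphs.

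Next, fix an edge $uv$ of $G$ and apply Menger's theorem to obtain three internally vertex-disjoint $(u,v)$-paths $P_1, P_2, P_3$, taking $P_1$ to be the edge $uv$ so that $\ell(P_1) = 1$. The resulting theta-subgraph contains three cycles of lengths $1+\ell(P_2)$, $1+\ell(P_3)$, and $\ell(P_2)+\ell(P_3)$. The hypothesis that none is divisible by $4$ pins the pair $(\ell(P_2),\ell(P_3))$ modulo $4$ down to a short list of admissible residues. An alternative (and often cleaner) starting configuration is a longest cycle $C$ in $G$ together with an ``ear-triple'' attached to $C$: by $3$-connectivity, either some vertex $v\notin V(C)$ admits three internally disjoint paths to $V(C)$, or $V(C)=V(G)$ and $C$ has three pairwise internally disjoint chords. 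In either case one obtains at least four cycles whose lengths mod~$4$ must simultaneously avoid $0$.

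To close the argument, a single theta-subgraph is never enough: it admits residue patterns (for instance $(\ell(P_2),\ell(P_3))\equiv(1,2)\pmod 4$) that avoid $\modfour{0}$-cycles. The key step is therefore to exploit $3$-connectivity a second time. I would pick an interior vertex $w$ on one of the longer paths, apply Theorem~\ref{thm:menger} again to obtain three internally disjoint paths from $w$ into the rest of the existing configuration, and then track all newly created cycles modulo~$4$ to exhibit one whose length is divisible by~$4$.

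The hardest part will be the case analysis that completes the argument: one must enumerate the admissible residue patterns of the initial theta (or cycle-plus-ears) configuration, identify which second attachment $3$-connectivity forces, and verify in every case that a new cycle of length divisible by~$4$ emerges. The subtle point is that some residue patterns break only under a rather particular choice of second attachment, so one has to invoke minimality of $G$ (or maximality of the chosen longest cycle $C$) to guarantee such an attachment must exist.
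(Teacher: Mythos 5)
The paper does not prove this statement at all: Theorem~\ref{thm:3con} is quoted from Dean, Lesniak and Saito~\cite{DLS93} and used as a black box, so there is no in-paper argument to compare yours against. Judged on its own, your proposal is a plan rather than a proof, and the gap is exactly at the step you yourself flag as ``the hardest part.'' After the first application of Theorem~\ref{thm:menger} you correctly observe that a single theta subgraph admits surviving residue patterns (e.g.\ $(\ell(P_2),\ell(P_3))\equiv(1,2)\pmod 4$), and you then assert that a second fan of three internally disjoint paths from some vertex $w$ into the configuration, followed by tracking residues, ``exhibits'' a $\modfour{0}$-cycle. That assertion is never verified, and it cannot be true as a purely local statement: the whole point of this paper is that there are $2$-connected graphs with no $\modfour{0}$-cycles and fairly dense local structure (the gadgets $F_7$, $F_8$, $F_9$ and the extremal graphs of Section~4 contain theta subgraphs with additional attachments), so a bounded configuration of the form ``theta plus one more fan'' can genuinely occur inside a $\modfour{0}$-cycle-free graph. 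Hence the contradiction cannot come from any fixed two-step local picture; $3$-connectivity has to be exploited globally (this is why the actual proof in~\cite{DLS93} is a substantial argument), and your appeal to ``minimality of $G$ or maximality of $C$'' to force the right second attachment is left entirely unspecified, which is precisely where the theorem lives.

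Two smaller points. First, your treatment of small cases is shaky as written: $K_4$, $K_5$, $K_{3,3}$ and the $3$-prism do not exhaust the $3$-connected graphs on at most six vertices. The conclusion you want is still true, but it should be argued differently, e.g.\ by counting paths of length two: a graph on $n\le 6$ vertices with minimum degree $3$ has at least $6\binom{3}{2}=18$ such paths while a $C_4$-free graph has at most $\binom{6}{2}=15$, so a $4$-cycle exists. Second, ``three internally disjoint paths from $w$ into the rest of the existing configuration'' needs the fan version of Menger's theorem with a specified target set, and the bookkeeping of which new cycles arise depends heavily on where the three feet land; without that enumeration the residue-tracking step has no content. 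As it stands, the proposal identifies a reasonable opening move but does not constitute a proof of Theorem~\ref{thm:3con}.
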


A \textit{theta graph} $\Theta$ is a graph consisting of three internally vertex-disjoint paths $Q_1$, $Q_2$, $Q_3$ from a vertex $x$ to a vertex $y$, denoted by $\Theta(Q_1,Q_2,Q_3)$. We sometimes call it an $(x,y)$-theta graph.
In addition, if each path $Q_i$ has even length, then we call it an \textit{even} theta graph.

\begin{theorem}[\!\!\cite{0mod4cycle2025}]\label{thm:planar} The following holds.
\begin{itemize} 
\item[\rm(i)] {\rm{(Lemma 1)}} 
An even theta graph   contains a $\modfour{0}$-cycle.
\item[\rm(ii)] {\rm{(Lemma 2)}} Every non-planar graph  contains a $\modfour{0}$-cycle.
\item[\rm(iii)] {\rm{(Lemma 9)}}  If $G$ is a bipartite $n$-vertex graph  without $\modfour{0}$-cycles for some $n\ge 4$, then $e(G) \le 
\frac{3(n-2)}{2}$. 
\end{itemize}
\end{theorem}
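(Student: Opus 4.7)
Since Theorem~\ref{thm:planar} bundles three statements of rather different flavour, I would treat them separately.

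For (i), the natural approach is a short parity count. Let $Q_1, Q_2, Q_3$ be the three internally vertex-disjoint $(x,y)$-paths of the even theta graph, and set $C_{ij} = Q_i \cup Q_j$ for $1 \le i < j \le 3$. Then
\[
\ell(C_{12}) + \ell(C_{13}) + \ell(C_{23}) \;=\; 2\bigl(\ell(Q_1) + \ell(Q_2) + \ell(Q_3)\bigr),
\]
which is divisible by $4$ since each $\ell(Q_i)$ is even. If no $C_{ij}$ were a $\modfour{0}$-cycle, each would have length $\equiv 2 \pmod{4}$, and the sum would be $\equiv 2 \pmod{4}$, a contradiction.

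For (ii), the plan is to reduce to (i) via Kuratowski's theorem. A non-planar graph $G$ contains a subdivision $K$ of $K_5$ or $K_{3,3}$; label each edge of the underlying minor by the parity of the length of the corresponding branch path. The task is then to exhibit two branch vertices $u,v$ and three internally vertex-disjoint $u$-$v$ trails in the minor whose label sums all vanish, because lifting to $K$ produces an even theta subgraph to which (i) applies. The main obstacle I expect is a short but delicate case analysis on the $\mathbb{F}_2$-edge-labellings of $K_5$ and $K_{3,3}$; since both minors are edge-rich with many internally disjoint trails between any two vertices, a pigeonhole or linear-algebra argument over $\mathbb{F}_2$ should always produce the required triple.

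For (iii), I would proceed by induction on $n \ge 4$, using (i) as the key structural input. A bipartite graph $G$ without a $\modfour{0}$-cycle has every cycle of length $\equiv 2 \pmod{4}$, and by (i) it contains no even theta subgraph. Consequently, for any two vertices in the same part there are at most two internally vertex-disjoint paths, so $G$ cannot be $3$-connected; hence $G$ has a vertex cut of size at most $2$, or a vertex of degree at most $1$. Splitting $G$ along such a small cut (or deleting the low-degree vertex) and invoking the inductive hypothesis on each side yields the $\frac{3(n-2)}{2}$ bound after summing and subtracting the at-most-one edge potentially double-counted. The most delicate point will be when a side of the split has only three vertices, where the inductive bound does not apply directly and a small direct computation is required to close the argument.
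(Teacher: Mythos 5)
First, note that the paper does not prove Theorem~\ref{thm:planar} at all: it is imported verbatim from \cite{0mod4cycle2025} (Lemmas 1, 2 and 9 there), so your argument has to stand entirely on its own. Your proof of (i) is correct and is the standard parity count. The genuine gap is in (ii). You propose to find, inside a Kuratowski subdivision $K$, two branch vertices joined by three internally vertex-disjoint paths of even length and then apply (i); you expect an $\mathbb{F}_2$ pigeonhole on the parity-labelled minor to always produce such a triple. It does not. Take $K_{3,3}$ with parts $\{a_1,a_2,a_3\}$ and $\{b_1,b_2,b_3\}$ and subdivide exactly the six edges $a_ib_j$ with $i\neq j$ once, so that the branch path $a_ib_i$ has length $1$ and every other branch path has length $2$. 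Every theta subgraph of a subdivision has its two degree-$3$ hubs at branch vertices. For two branch vertices in the same part, the only three internally disjoint connecting paths are the lifts of $a_ib_ka_j$, $k=1,2,3$, and the two passing through $b_i$ and $b_j$ have odd length $3$; for two branch vertices in opposite parts, any three internally disjoint connecting paths must use the direct branch path plus two disjoint lifts of $3$-edge paths of the minor, and checking the finitely many such triples shows one member is always odd. Hence this subdivision contains no even theta whatsoever, although it is non-planar; it does contain a $\modfour{0}$-cycle, namely the lift (of length $12$) of the $6$-cycle avoiding the diagonal matching, but your route cannot see it. This also exposes the deeper issue: parities of branch paths can only certify a $\modfour{0}$-cycle through the even-theta route, so any proof of (ii) must track lengths modulo $4$ and use further configurations (in the spirit of Lemma~\ref{lem:previous}), not just an $\mathbb{F}_2$ argument on $K_5$ and $K_{3,3}$.

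For (iii), the point you defer as ``a small direct computation'' is exactly where the count fails as written. If the $2$-cut $\{x,y\}$ has a side consisting of a single extra vertex (equivalently, $G$ has a vertex of degree $2$), splitting or deleting gives only $e(G)\le \frac{3(n-3)}{2}+2=\frac{3n-5}{2}$, which exceeds the target $\frac{3(n-2)}{2}$, and when $n$ is odd integrality does not absorb the excess. So degree-$2$ vertices need a genuinely different treatment in the induction (for instance suppressing them while controlling cycle lengths modulo $4$), and until that is supplied the inductive step is incomplete even granting (i).
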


\begin{lemma}[\!\!\cite{0mod4cycle2025}]\label{lem:previous}
Let $C_1$, $C_2$, and $C_3$ be odd cycles of a graph $G$ with $\ell(C_1) \equiv \ell(C_2) { \equiv \ell(C_3)}  \pmod 4$.
\begin{itemize}
\item[\rm(i)] {\rm{(Lemma 6(2))}} If $V(C_1) \cap V (C_2) = \{x\}$ and $P$ is an even  $(V(C_1),V(C_2))$-path with $x\not\in V (P)$, then $C_1 \cup C_2 \cup P$ contains a $\modfour{0}$-cycle.
\item[\rm(ii)] {\rm{(Lemma 6(3))}}
 If $C_1$, $C_2$ are vertex-disjoint  and $P$, $Q$, $R$ are vertex-disjoint $(V(C_1),V(C_2))$-paths, then $C_1 \cup C_2 \cup P \cup Q \cup R$ contains a $\modfour{0}$-cycle.
\item[\rm(iii)] {\rm{(Lemma 7)}}
Suppose that $C_1$, $C_2$, and $C_3$ pairwise intersect at a vertex $x$. 
Let $Q_i$ be a $(V(C_i),V(C_{i+1}))$-path in $G-V(C_{i+2})$ for each $i\in \{1,2,3\}$ (the subscripts are taken modulo $3$) such that $Q_1$, $Q_2$, and $Q_3$ are pairwise internally vertex-disjoint. 
Then $C_1 \cup C_2 \cup C_3 \cup Q_1 \cup Q_2 \cup Q_3$ contains a $\modfour{0}$-cycle. 
\end{itemize}
\end{lemma}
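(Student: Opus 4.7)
The lemma has three parts, and I will handle them in order, reusing part (i) as a tool inside part (iii). The common arithmetic ingredient is that any two odd integers $a, b$ with $a \equiv b \pmod 4$ satisfy $a + b \equiv 2 \pmod 4$; in particular $\ell(C_i) + \ell(C_j) \equiv 2 \pmod 4$ for any two cycles in the hypothesis.

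For (i), let $u \in V(C_1)$ and $v \in V(C_2)$ be the endpoints of $P$, so $u, v \ne x$ since $x \notin V(P)$. Write $a_1, b_1$ for the lengths of the two $(x, u)$-arcs of $C_1$; since $a_1 + b_1 = \ell(C_1)$ is odd, exactly one of them is even, and define $a_2, b_2$ analogously on $C_2$. Concatenating an $(x, u)$-arc, $P$, and a $(v, x)$-arc yields four cycles through $x$, and a parity check shows exactly two are even with lengths summing to $\ell(C_1) + \ell(C_2) + 2\ell(P) \equiv 2 \pmod 4$ (using that $\ell(P)$ is even). Two even numbers summing to $2 \pmod 4$ must consist of one $\equiv 0$ and one $\equiv 2 \pmod 4$, giving the required $\modfour{0}$-cycle. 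For (ii) I apply the same construction to each unordered pair from $\{P, Q, R\}$: the pair $\{A, B\}$ yields two even cycles whose lengths sum to $\ell(C_1) + \ell(C_2) + 2(\ell(A) + \ell(B))$. If no $\modfour{0}$-cycle exists, both of these even cycles are $\equiv 2 \pmod 4$, so their sum $\equiv 0 \pmod 4$, which forces $\ell(A) + \ell(B)$ to be odd. Applying this to all three pairs makes $\ell(P) + \ell(Q)$, $\ell(P) + \ell(R)$, $\ell(Q) + \ell(R)$ all odd, contradicting that their sum $2(\ell(P) + \ell(Q) + \ell(R))$ is even.

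For (iii), I first reduce to the case that every $\ell(Q_i)$ is odd: otherwise apply (i) to $C_i, C_{i+1}, Q_i$, noting that $Q_i$ avoids $x$ because $x \in V(C_{i+2})$ and $Q_i \subseteq G - V(C_{i+2})$. For each $i$ let $u_i, v_i \in V(C_i)$ be the endpoints of $Q_i$ and $Q_{i-1}$ respectively; then $\{x, u_i, v_i\}$ partitions $C_i$ into arcs of lengths $p_i$ (from $x$ to $u_i$ avoiding $v_i$), $q'_i$ (from $u_i$ to $v_i$ avoiding $x$), and $r_i$ (from $v_i$ to $x$ avoiding $u_i$), with $p_i + q'_i + r_i = \ell(C_i)$. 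If some $p_i$ is even, I build three internally vertex-disjoint even $(x, u_i)$-paths forming an even theta graph: the $p_i$-arc; an arc of $C_{i+1}$ from $x$ to $v_{i+1}$ followed by $\overleftarrow{Q_i}$; and an arc of $C_{i-1}$ from $x$ to $u_{i-1}$ followed by $Q_{i-1}$ followed by the $q'_i$-arc of $C_i$ from $v_i$ to $u_i$. Internal disjointness forces the last path's $C_i$-piece to be the $q'_i$-arc (not the one through $x$); since the two arc-options on each of $C_{i \pm 1}$ have opposite parity, each can be chosen to make the corresponding path even. Theorem~\ref{thm:planar}(i) then produces a $\modfour{0}$-cycle, and the symmetric construction at $\{x, v_j\}$ using the $r_j$-arc handles the case some $r_j$ is even. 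In the remaining case every $p_i$ and $r_i$ is odd, which forces every $q'_i = \ell(C_i) - p_i - r_i$ to be odd as well. For each $i$ define the cycle $\Gamma_i$ of length $p_i + \ell(Q_i) + p_{i+1} + q'_{i+1}$, formed by the $p_i$-arc, $Q_i$, and the arc of $C_{i+1}$ from $x$ to $v_{i+1}$ passing through $u_{i+1}$. Each $\Gamma_i$ is a sum of four odd numbers, hence even; if none is $\equiv 0 \pmod 4$, all three are $\equiv 2 \pmod 4$, so $\Gamma_1 + \Gamma_2 + \Gamma_3 \equiv 2 \pmod 4$. But direct computation gives $\Gamma_1 + \Gamma_2 + \Gamma_3 = 2(p_1 + p_2 + p_3) + \Omega$, where $\Omega := \ell(Q_1) + \ell(Q_2) + \ell(Q_3) + q'_1 + q'_2 + q'_3$ is the length of the simple cycle formed by the three $Q_i$'s together with the three $q'_i$-arcs. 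Since $p_1 + p_2 + p_3$ is odd, $2(p_1 + p_2 + p_3) \equiv 2 \pmod 4$, forcing $\Omega \equiv 0 \pmod 4$, so $\Omega$ is itself a $\modfour{0}$-cycle, contradiction.

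The main obstacle I anticipate is the bookkeeping in part (iii): verifying internal disjointness of the three theta-graph paths (which is what pins the long path's $C_i$-piece to the $q'_i$-arc rather than the arc through $x$), and checking that the degenerate case $u_i = v_i$ does not interfere. The latter is benign, because $u_i = v_i$ forces $p_i + r_i = \ell(C_i)$ to be odd, so one of $p_i, r_i$ is even and we are already in the theta-graph case rather than the final parity-counting case.
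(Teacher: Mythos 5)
Your proposal is correct, but note that this paper does not prove Lemma~\ref{lem:previous} at all: it is imported verbatim from the cited reference (Lemmas 6(2), 6(3) and 7 of \cite{0mod4cycle2025}), so there is no in-paper argument to compare against. Judged on its own, your proof is sound and self-contained. Parts (i) and (ii) are clean parity counts: in (i) the four cycles arc--$P$--arc through $x$ contain exactly two even ones whose lengths sum to $\ell(C_1)+\ell(C_2)+2\ell(P)\equiv 2\pmod 4$, and in (ii) the same device applied to each pair of connectors forces all three pairwise sums $\ell(A)+\ell(B)$ to be odd, which is impossible. In (iii), the reduction to odd $\ell(Q_i)$ via (i) is valid (each $Q_i$ avoids $x$ because $x\in V(C_{i+2})$), your disjointness checks for the even theta graph go through (the key facts being $V(C_i)\cap V(C_j)=\{x\}$, that each $Q_i$ avoids $V(C_{i+2})$ entirely, and that interiors of the $Q_i$ avoid all three cycles, so the only possible endpoint coincidence is $u_i=v_i$, which you correctly divert to the theta case), and the final count $\ell(\Gamma_1)+\ell(\Gamma_2)+\ell(\Gamma_3)=2(p_1+p_2+p_3)+\Omega$ with all of $p_i,r_i,q_i',\ell(Q_i)$ odd does yield the contradiction, since $\Omega$ is indeed the length of a genuine cycle in the union. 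One small remark worth making explicit if you write this up: after the reduction step, your argument for (iii) uses only that the three cycles are odd, not the mod-$4$ congruence --- the congruence hypothesis enters solely through the application of (i) to an even connector --- which is a mildly stronger statement than the quoted lemma.
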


The following properties are easily derived from the above lemma and will be frequently used.

\begin{lemma}\label{lem:odd:cycles} 
Let $G$ be a $2$-connected graph without $\modfour{0}$-cycles, and $C_1$, $C_2$, and $C_3$ be three edge-disjoint odd cycles of $G$.
\begin{itemize}
\item[\rm(i)] There is no cycle $C$ such that $C \cap C_i$ induces a path in $C_i$ for each $i\in \{ 1, 2, 3\}$ and all vertices in $V(C_1)-V(C)$, $V(C_2)-V(C)$, and $V(C_3)-V(C)$ are distinct.

\item[\rm(ii)]  There is no theta graph $\Theta:=\Theta(Q_1,Q_2,Q_3)$ such that 
$\Theta \cap C_i$ induces a subpath of $Q_i$ for each $i \in\{ 1, 2, 3\}$ and all vertices in $V(C_1)-V(\Theta)$, $V(C_2)-V(\Theta)$, and $V(C_3)-V(\Theta)$ are distinct.    

\item[\rm(iii)] 
If $C_1$ and $C_2$ are triangles, then there are no three vertex-disjoint $(V(C_1),V(C_2))$-paths and 
$\ell(P)+\ell(Q)\equiv 3\pmod{4}$  for every two vertex-disjoint $(V(C_1),V(C_2))$-paths $P$ and $Q$.

\item[\rm(iv)] Suppose that $|V(C_i)\cap V(C_j)|=1$ for every distinct $i,j\in\{1,2,3\}$. Then $V(C_1) \cap V(C_2) \cap V(C_3)=\{v\}$ for some vertex $v$.
Moreover, if $\ell(C_1)\equiv\ell(C_2)\equiv \ell(C_3)\pmod{4}$, then there is no connected subgraph $H$ of $G-v$ such that 
$V(H)\cap V(C_i)\neq \emptyset$ and $E(H)\cap E(C_i)=\emptyset$ for each $i\in \{1,2,3\}$. 
\end{itemize}
\end{lemma}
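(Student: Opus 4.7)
The plan is to derive a contradiction in each part by exhibiting a $\modfour{0}$-cycle via Lemma~\ref{lem:previous} or Theorem~\ref{thm:planar}. For (i), set $P_i := C \cap C_i$ (a subpath of $C_i$) and $P_i' := C_i - E(P_i)$, the complementary arc of $C_i$ sharing endpoints with $P_i$. The vertex-distinctness hypothesis forces the interiors of $P_1', P_2', P_3'$ to be pairwise disjoint and disjoint from $V(C)$, so for every $S \subseteq \{1,2,3\}$ the walk obtained from $C$ by replacing $P_i$ with $P_i'$ for each $i \in S$ is a genuine cycle $C_S$ of length
\[
\ell(C_S) = \ell(C) + \sum_{i \in S} d_i, \qquad d_i := \ell(C_i) - 2\ell(P_i).
\]
Each $d_i$ is odd, and a short case check across the four patterns of $(d_1, d_2, d_3) \pmod 4$ shows that the eight values $\sum_{i \in S} d_i \pmod 4$ cover every residue in $\{0,1,2,3\}$. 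Hence $\ell(C_S) \equiv 0 \pmod 4$ for some $S$, contradicting the hypothesis.

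Part (ii) applies the same replacement, independently, to the three paths $Q_1, Q_2, Q_3$ of $\Theta$: replacing $P_i := \Theta \cap C_i$ with its complementary arc in $C_i$ flips the parity of $\ell(Q_i)$, and the distinctness hypothesis again keeps the three modified paths internally disjoint. Performing the replacement exactly on the indices where $\ell(Q_i)$ is odd produces an even theta graph, and Theorem~\ref{thm:planar}(i) furnishes a $\modfour{0}$-cycle. For (iii), both triangles satisfy $\ell(C_1) \equiv \ell(C_2) \equiv 3 \pmod 4$, so three vertex-disjoint $(V(C_1), V(C_2))$-paths would immediately contradict Lemma~\ref{lem:previous}(ii); given two vertex-disjoint such paths $P, Q$, concatenating $P$, an arc of $C_2$ between their $V(C_2)$-endpoints, $\overleftarrow{Q}$, and an arc of $C_1$ between their $V(C_1)$-endpoints yields four cycles of lengths $\ell(P) + \ell(Q) + s$ for $s \in \{2, 3, 3, 4\}$, and forbidding any of these to be divisible by $4$ forces $\ell(P) + \ell(Q) \equiv 3 \pmod 4$.

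For the first half of (iv), let $v_{ij}$ denote the unique vertex in $V(C_i) \cap V(C_j)$. If $v_{12}, v_{13}, v_{23}$ were pairwise distinct, pigeonhole would give indices $i, j$ with $\ell(C_i) \equiv \ell(C_j) \pmod 4$, and then one of the two arcs of $C_k$ between $v_{ik}$ and $v_{jk}$ is an even $(V(C_i), V(C_j))$-path avoiding $v_{ij}$; Lemma~\ref{lem:previous}(i) then gives a $\modfour{0}$-cycle, so $v_{12} = v_{13} = v_{23} =: v$. For the second half, suppose $H$ exists and let $T \subseteq H$ be a subtree of minimum size meeting each $V(C_i)$; a minimality argument shows $T$ is a Steiner tree for some triple $(w_1, w_2, w_3)$ with $w_i \in V(C_i)$, and every non-terminal internal vertex of $T$ lies outside $V(C_1) \cup V(C_2) \cup V(C_3)$. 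Then $T$ is either a path through the three $w_i$ or a Y-shape meeting them at its three leaves, and a parity count on $\ell(T[w_1, w_2])$, $\ell(T[w_2, w_3])$, $\ell(T[w_1, w_3])$ yields at least one pair $(i, j)$ with $T[w_i, w_j]$ even; since $v \notin V(H)$, this is an even $(V(C_i), V(C_j))$-path avoiding $v$, and Lemma~\ref{lem:previous}(i) closes the argument.

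The main obstacle throughout is ensuring the constructed objects are genuinely simple (cycles in (i), theta graphs in (ii), internally disjoint paths in (iv)). In (i) and (ii) this is built into the distinctness hypothesis, while in (iv) the Steiner-tree minimality must be leveraged to argue that no non-terminal internal vertex of $T$ can lie on any $C_\ell$, since otherwise that vertex could be promoted to a terminal and $T$ pruned, contradicting minimality.
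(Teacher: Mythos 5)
Your parts (i) and (ii) are fine ((ii) is the paper's argument verbatim; (i) replaces the paper's ``pick two cycles of equal residue and use the third only as a parity switch'' by an all-subsets swap whose eight lengths cover every residue mod $4$, which is a correct variant at the same level of rigor), and your first half of (iv) is a clean direct application of Lemma~\ref{lem:previous}(i). The problems are in (iii) and in the second half of (iv).

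In (iii) you never treat the case where the two triangles share a vertex, which is exactly the situation in which the lemma gets applied later (edge-disjoint triangles may meet in one vertex, and one of $P,Q$ may be the trivial path at that vertex). Lemma~\ref{lem:previous}(ii) requires $C_1$ and $C_2$ to be vertex-disjoint, so your ``immediate'' exclusion of three vertex-disjoint paths does not cover the shared-vertex case; and if the triangles share a vertex $z$ with both $P,Q$ nontrivial and $z$ equal to the third vertex of both triangles relative to the path-ends, then your $s=4$ concatenation revisits $z$ and is not a cycle, so you only get $\ell(P)+\ell(Q)\not\equiv 1,2$, leaving the residue $0$ alive. The paper closes this by a separate step: pairing each nontrivial path with the trivial path at $z$ forces $\ell(P)\equiv\ell(Q)\equiv 3\pmod 4$, and then $P$, $Q$ plus one edge of each triangle form a $\modfour{0}$-cycle, so in the shared-vertex case one of any two vertex-disjoint paths must be trivial; you need some such argument.

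In (iv) the assertion that minimality forces every non-terminal internal vertex of $T$ off $V(C_1)\cup V(C_2)\cup V(C_3)$ is not justified, and your proposed fix (promote the vertex and prune) fails precisely in the path case: if $T$ is a path with ends $w_1\in V(C_1)$, $w_2\in V(C_2)$ and the $C_3$-terminal internal, minimality only yields $V(T)\cap V(C_1)=\{w_1\}$ and $V(T)\cap V(C_2)=\{w_2\}$; promoting an interior vertex $u\in V(C_3)$ to be the new $C_3$-terminal removes nothing, since both ends are still the unique representatives of $C_1$ and $C_2$. Hence $T$ may contain two or more $C_3$-vertices in its interior, and then the even pairwise path produced by your parity count can have an interior vertex on $C_3$, so it is not a genuine $(V(C_i),V(C_j))$-path and Lemma~\ref{lem:previous}(i) does not apply (e.g.\ a path $u_0u_1u_2u_3$ with $u_0\in C_1$, $u_1,u_2\in C_3$, $u_3\in C_2$ and all three admissible segments of odd length defeats the argument for every choice of terminal). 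This residual configuration is exactly what the paper's longer analysis is for: it works with shortest $(V(C_i),V(C_{i+1}))$-paths in $H$, rules out intersections by constructing a strictly smaller admissible subgraph $H'$, and invokes Lemma~\ref{lem:previous}(iii) when the three paths are disjoint. Your Steiner-tree parity argument covers the Y-shaped case and the path case with a single internal cycle-vertex, but it does not prove the lemma as stated.
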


\begin{figure}[!ht]
\centering
\includegraphics[page = 2, height=3.5cm]{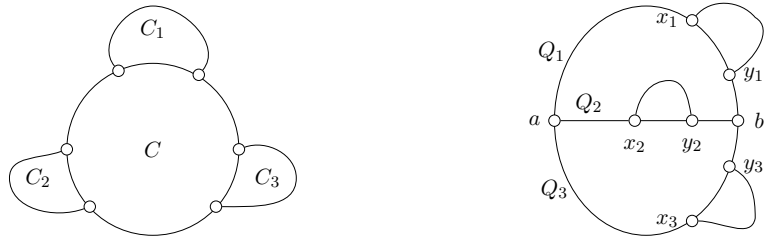}
\caption{Illustrations for Lemma~\ref{lem:odd:cycles}~(i)  and~(ii)}\label{fig:odd:cycles}
\end{figure}

\begin{proof}
(i) Suppose to the contrary that there is a cycle $C$ satisfying the condition.
See the first figure of Figure~\ref{fig:odd:cycles}. 
We may assume that $\ell(C_1)\equiv\ell(C_2)\pmod{4}$. 
Note that there are vertex-disjoint $(V(C_1),V(C_2))$-paths $P$ and $Q$ in $C\cup C_3$ such that $\ell(P)+\ell(Q)$ is even, since $C_3$ is an odd cycle. 
Since $\ell(C_1)+\ell(C_2) \equiv 2 \pmod{4}$, 
we can take subpaths $R_1$ and $R_2$ of $C_1$ and $C_2$, respectively, so that $\ell(R_1)+\ell(R_2)\equiv \ell(P)+\ell(Q)\pmod{4}$ and each $E(R_i)$ is either $E(C_i)\cap E(C)$ or $E(C_i)\setminus E(C)$.
By using $P$, $Q$, $R_1$, $R_2$, we can find a $\modfour{0}$-cycle, which is a contradiction.

\medskip

\noindent (ii) Suppose to the contrary that there is an $(a,b)$-theta graph $\Theta$ satisfying the condition.
Let $\Theta\cap C_i$ be an $(x_i,y_i)$-path and $Q'_i$ be an $(x_i,y_i)$-path whose edge set is $E(C_i) - E(\Theta)$, for each $i\in\{1,2,3\}$. See the second figure of Figure~\ref{fig:odd:cycles}. Then we can find an even $(a,b)$-theta graph by taking an even path among $Q_i$ or $Q_i[a,x_i]+Q'_i+Q_i[y_i,b]$, which is a contradiction to Theorem~\ref{thm:planar}~(i). 
\medskip

\noindent (iii)  Let $C_1$ and $C_2$ be triangles. If $V(C_1)$ and $V(C_2)$ are disjoint, then 
by Lemma~\ref{lem:previous}~(ii), there are no three vertex-disjoint $(V(C_1),V(C_2))$-paths.
In the following, if $V(C_1)$ and $V(C_2)$ are not disjoint, then we let $z$ denote the vertex in $V(C_1)\cap V(C_2)$.

We take any two vertex-disjoint $(V(C_1),V(C_2))$-paths $P$ and $Q$ by Theorem~\ref{thm:menger}.
Suppose that $V(C_1)$ and $V(C_2)$ are disjoint or one of $P$ and $Q$ is a trivial path $z$. By using paths of length one or two in $C_1$ and $C_2$, we find cycles of length $\ell(P)+\ell(Q)+2$, $\ell(P)+\ell(Q)+3$, and $\ell(P)+\ell(Q)+4$ in $G$. Therefore $\ell(P)+\ell(Q)\equiv 3 \pmod{4}$  since $G$ has no $\modfour{0}$-cycles. 

From the previous paragraphs, it is sufficient to show that when $C_1$ and $C_2$ share a vertex $z$, one of $P$ and $Q$ must be a trivial path. Suppose to the contrary that $C_1$ and $C_2$ share a vertex $z$ and both $P$ and $Q$ are nontrivial paths. Let $R$ be the trivial path with the vertex $z$. Then applying the previous paragraph to $P$ and $R$, we have $\ell(P)\equiv 3 \pmod{4}$. Similarly, $\ell(Q)\equiv 3\pmod {4}$.
Then $P$, $Q$, an edge of $C_1$, and an edge of $C_2$ form a $\modfour{0}$-cycle, which is a contradiction. 

\medskip

\noindent (iv) If  $|V(C_i)\cap V(C_j)|=1$ for every distinct $i,j\in\{1,2,3\}$ and $V(C_1) \cap V(C_2) \cap V(C_3) = \emptyset$, then one may easily find a {$\modfour{0}$-}cycle $C$ such that $V(C)\subset V(C_1)\cup V(C_2) \cup V(C_3)$, which contradicts (i) of this lemma.
Let $v$ be the vertex in $V(C_1)\cap V(C_2) \cap V(C_3)$, and let $B_i=C_i-v$ for each $i \in \{1, 2, 3\}$.  Suppose that $\ell(C_1)\equiv \ell(C_2)\equiv \ell(C_3)\pmod{4}$. Suppose to the contrary that there is a connected subgraph $H$ of $G-v$ such that $V(H)\cap V(C_i)\neq \emptyset$ and $E(H)\cap E(C_i)= \emptyset$ for each $i\in \{1,2,3\}$.
We take such $H$ as a smallest one. 
Then $H$ is a tree. Take a shortest $(V(C_i), V(C_{i+1}))$-path $R_{i,i+1}$ in $H$ for each $i\in \{1,2,3\}$, where subscripts are taken modulo $3$, and say $R_{i,i+1}$ is an $(x_i,y_i)$-path.
By Lemma~\ref{lem:previous}~(i), ${R_{i,j}}$ has odd length. 
Without loss of generality, we assume that $\ell(R_{1,2})\le \ell(R_{2,3})\le \ell(R_{3,1})$.

If $R_{1,2}$ contains a vertex of $B_3$ or $R_{2,3}$ contains a vertex of $B_1$, it contradicts the choice of $R_{3,1}$ and the assumption $\ell(R_{1,2})\le \ell(R_{2,3})\le \ell(R_{3,1})$.
Thus $R_{1,2}$ does not contain any vertex of $B_3$ and $R_{2,3}$ does not contain any vertex of $B_1$. Then $y_2$ is the only vertex in $V(B_3)\cap (V(R_{1,2})\cup V(R_{2,3}))$ and 
$x_1$ is the only vertex in $V(B_1)\cap (V(R_{1,2})\cup V(R_{2,3}))$.

Suppose that $y_1=x_2$. Then the walk $R_{1,2}+R_{2,3}$ contains at least one vertex of each $B_i$, and therefore, $E(H)=E(R_{1,2})\cup E(R_{2,3})$. Thus the path $R_{3,1}$ is a $(y_2,x_1)$-path together with observations in the previous paragraph.
Then {$R_{1,2}+R_{2,3}+R_{3,1}$} is a closed walk of an odd length. 
Since a closed walk of an odd length contains an odd cycle, it is a contradiction to the fact that $H$ is a tree. Thus $y_1\neq x_2$. 

Note that the existence of $z\in V(R_{1,2})\cap V(R_{2,3})$ implies that $R_{1,2}[x_1,z]+R_{2,3}[z,y_2]$ is a $(V(C_1),V(C_3))$-path, and therefore $E(R_{1,2})\cup E(R_{2,3}[z, y_2])$ induces a connected graph $H'$ in $G-v$ containing at least one vertex of each of 
$C_1$, $C_2$, and $C_3$. Since $H'$ is a subgraph of $H$ and $H'$ does not have an edge incident to $x_2$, $H'$ has less edges than $H$,  which is a contradiction. Then $V(R_{1,2})\cap V(R_{2,3}) = \emptyset$. 

If $R_{3,1}$ does not intersect with any of $R_{1,2}$ and $R_{2,3}$, {then} it is a contradiction to Lemma~\ref{lem:previous}~(iii).
Thus $R_{3,1}$ intersects with  $R_{1,2}$ or $R_{2,3}$. 
If $R_{3,1}$ intersects with  $R_{j,j+1}$ for some $j\in \{1,2\}$, then since $R_{3,1}\cup R_{j,j+1}$ does not have an edge incident to $x_2$ or $y_1$, it makes a 
proper connected subgraph $H'$ of $H$ such that
$V(H')\cap V(C_i)\neq \emptyset$ and {$E(H')\cap E(C_i)= \emptyset$} for each $i\in\{1,2,3\}$, which is a contradiction. 
\end{proof}

\begin{proposition}\label{clm:triperm}
Let $G$ be a $2$-connected graph without $\modfour{0}$-cycles, and $T_1$, $T_2$, $T_3$ be triangles in $G$. There is a permutation $\sigma$ on $\{1,2,3\}$ such that there are vertex-disjoint  $(V(T_{\sigma(1)}), V(T_{\sigma(2)}))$-paths $P$ and $Q$ satisfying $ab \in E(P)$ and $c \in V(Q)$ where $V(T_{\sigma(3)}) = \{a,b,c\}$. (See Figure~\ref{fig:prop2.9} for an illustration.)
\end{proposition}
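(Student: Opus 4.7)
The plan is to organize the argument around the intersection pattern of $T_1,T_2,T_3$ and, in each case, either produce the desired paths directly or reach a contradiction via Lemma~\ref{lem:odd:cycles}. First I would observe that no two of $T_1,T_2,T_3$ can share an edge: if $T_i$ and $T_j$ shared an edge $uv$ with third vertices $x,y$ respectively, then $u\,x\,v\,y\,u$ would be a $4$-cycle and hence a $\modfour{0}$-cycle, contradicting the hypothesis. So $|V(T_i)\cap V(T_j)|\le 1$ for every $i\ne j$. I then split into two cases: \textbf{(A)} some pair, say $T_1$ and $T_2$, is vertex-disjoint; \textbf{(B)} every pair shares exactly one vertex, in which case Lemma~\ref{lem:odd:cycles}(iv) yields a common vertex $v\in V(T_1)\cap V(T_2)\cap V(T_3)$, and one writes $T_i=\{v,a_i,b_i\}$.

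\emph{Case (A).} By Menger's theorem (Theorem~\ref{thm:menger}) applied to the $2$-connected graph $G$, together with Lemma~\ref{lem:odd:cycles}(iii), there are exactly two vertex-disjoint $(V(T_1),V(T_2))$-paths. Among all such pairs, I would choose $P,Q$ maximizing $|V(T_3)\cap V(P\cup Q)|$, and set $\sigma$ so that $T_{\sigma(3)}=T_3$. I would then argue: (i)~$V(T_3)\subseteq V(P)\cup V(Q)$ — otherwise a vertex $w\in V(T_3)\setminus V(P\cup Q)$ allows rerouting $P$ or $Q$ through $w$ using the triangle edges of $T_3$ and $2$-connectivity of $G$, violating maximality (or, if no rerouting is possible, one extracts a $\modfour{0}$-cycle via Lemma~\ref{lem:previous}); (ii)~the three vertices of $T_3$ split $2+1$ between $P$ and $Q$ — if all three lay on one path, the $T_3$-edges act as chords creating short cycles whose interaction with $T_1$ or $T_2$ yields either an even theta subgraph (forbidden by Theorem~\ref{thm:planar}(i)) or a configuration forbidden by Lemma~\ref{lem:odd:cycles}(i)--(ii). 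Once (i) and (ii) hold, letting $a,b\in V(P)\cap V(T_3)$ and $c\in V(Q)\cap V(T_3)$, the shortcut $P':=P[V(T_1),a]+ab+P[b,V(T_2)]$ along the edge $ab\in E(T_3)$ is vertex-disjoint from $Q$ and contains $ab$ as an edge, so $(P',Q)$ witnesses the conclusion.

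\emph{Case (B).} The trivial path $v$ is a valid $(V(T_i),V(T_j))$-path, so by Lemma~\ref{lem:odd:cycles}(iii) any pair of vertex-disjoint $(V(T_i),V(T_j))$-paths must contain a path with $v$ as an endpoint. I would take $\sigma$ with $T_{\sigma(3)}=T_3$, construct $P$ to start at $v\in V(T_1)$ with first edge $va_3\in E(T_3)$ and continue inside $G-v$ to reach $V(T_2)\setminus\{v\}$, and construct a vertex-disjoint $(V(T_1),V(T_2))$-path $Q$ passing through $b_3$. Existence follows by applying Menger's theorem in $G-v$ between appropriate source-sink sets, combined with the ``no connected subgraph'' clause of Lemma~\ref{lem:odd:cycles}(iv), which prevents alternative routings that bypass $b_3$. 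Then $va_3\in E(P)$ and $b_3\in V(Q)$, as required.

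\emph{Main obstacle.} The hardest step is the exchange argument in Case (A) establishing both $V(T_3)\subseteq V(P)\cup V(Q)$ and the $2+1$ distribution, since each piece requires converting a hypothetical bad configuration into an explicit $\modfour{0}$-cycle despite the mod-$4$ parity constraints in Lemma~\ref{lem:odd:cycles}(iii); Case (B) carries the additional delicacy of simultaneously placing the edge $va_3$ on $P$ and the vertex $b_3$ on $Q$.
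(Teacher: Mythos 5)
There is a genuine gap, and it is precisely at the point where you discard the freedom that the permutation $\sigma$ is there to provide. In your Case (A) you fix $T_{\sigma(3)}=T_3$ and claim (i) that after a maximal choice of $P,Q$ one gets $V(T_3)\subseteq V(P)\cup V(Q)$. This is false in general: since a $(V(T_1),V(T_2))$-path is not allowed to have interior vertices in $V(T_1)\cup V(T_2)$, if $T_3$ is attached to the rest of the graph only through paths ending at two vertices of $T_1$ (say at $t_1$ and at $p_1$), then \emph{no} $(V(T_1),V(T_2))$-path can contain any vertex of $T_3$, no matter how you reroute; there is no contradiction to extract either, because such graphs can be perfectly admissible. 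This is exactly the configuration the paper handles as case (1)-1, and there the conclusion only holds with the nontrivial permutation $\sigma=(13)$: the two paths run between $V(T_3)$ and $V(T_2)$, one carrying the edge $t_1p_1$ of $T_1$ and the other passing through $q_1$. The same issue recurs in your Case (B): the paper's argument (a minimal connected subgraph $H$ of $G-v$ meeting all three triangles, which by Lemma~\ref{lem:odd:cycles}~(iv) must contain an edge of \emph{some} $T_i$) only guarantees that some triangle, not necessarily $T_3$, plays the special role; your claim that one can always force the first edge of $P$ to be $va_3\in E(T_3)$ and put $b_3$ on $Q$ is unsupported and, given the Case (A) phenomenon, should not be expected to hold with $T_3$ prescribed in advance.

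Beyond this structural error, the parts you flag as the ``main obstacle'' are exactly the parts the paper has to work through and that your sketch does not: the exchange/maximality step and the $2+1$ distribution are not established by the one-line appeals to ``rerouting'' and to Lemma~\ref{lem:previous}; the paper instead performs a case analysis (cases (1)-1 through (1)-4 and (2)-1 through (2)-5, with $P$ chosen shortest subject to $|V(P)\cap V(T_3)|\ge|V(Q)\cap V(T_3)|$) in which each bad configuration is killed by exhibiting a cycle or theta graph forbidden by Lemma~\ref{lem:odd:cycles}~(i)--(iii). Your proposal would need both the corrected use of the permutation (letting the special triangle be whichever one the configuration dictates) and a worked-out version of that case analysis to constitute a proof.
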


\begin{figure}[h!]
\centering
\includegraphics[page=10]{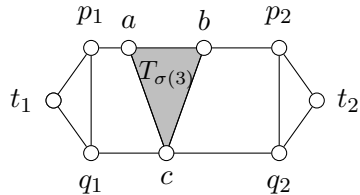}
\caption{An illustration for Proposition~\ref{clm:triperm}}\label{fig:prop2.9}
    \end{figure}

\begin{proof}  We note that since $G$ has no $C_4$, $|V(T_i) \cap V(T_j)| \le 1$ for every distinct $i,j\in\{1,2,3\}$.  First, suppose that  $|V(T_i)\cap V(T_j)|=1$ for every distinct $i,j\in\{1,2,3\}$. 
By Lemma~\ref{lem:odd:cycles}~(iv),  
there is a unique vertex $v$ in $V(T_1)\cap V(T_2)\cap V(T_3)$. 
Since $G-v$ is connected, we can take 
a smallest connected subgraph $H$ of $G-v$ that $V(H)\cap V(T_i)\neq \emptyset$ for every $i\in\{1,2,3\}$.
Since $H$ contains an edge of some $T_i$ by Lemma~\ref{lem:odd:cycles}~(iv), it gives a desired conclusion from the minimality of $H$.

In the following, we may assume that $V(T_1)\cap V(T_2)=\emptyset$. 
Since $G$ is $2$-connected, there are two vertex-disjoint $(V(T_1),V(T_2))$-paths $P$ and $Q$ by Theorem~\ref{thm:menger}. Let $P$ and $Q$ be a $(p_1,p_2)$-path and a $(q_1,q_2)$-path, respectively. We take such $P$ as a shortest one so that $|V(P)\cap V(T_3)|\ge |V(Q)\cap V(T_3)|$.
Let $\{t_i\} = V(T_i) - \{p_i, q_i\}$ for each $i \in \{1,2\}$. For simplicity, we also let $U = V(P) \cup V(Q) \cup V(T_1) \cup V(T_2)$.  

\medskip

\noindent{(Case 1)} Suppose that $|V(T_3) \cap U|\le 1$. By Theorem~\ref{thm:menger}, there are vertex-disjoint $(V(T_3), U)$-paths $R_1$ and $R_2$. Let $R_1$ and $R_2$ be an $(a,r_1)$-path and a $(b,r_2)$-path, say $R_1$ is a shortest one.
Note that $R_1$ might be a trivial path and $R_2$ cannot be a trivial path by our case assumption that $|V(T_3) \cap U|\le 1$.
If $\{r_1,r_2\}=\{t_1,t_2\}$, then $P$, $Q$, and $\overleftarrow{R_1}+ab+R_2$ are three vertex-disjoint $(V(T_1), V(T_2))$-paths, which is a contradiction by Lemma~\ref{lem:odd:cycles}~(iii). Without loss of generality, we may assume that $t_2\not\in \{r_1,r_2\}$. Then one of the following four cases must hold.
\begin{itemize}
\item[(1)-1] $r_1=t_1$ and $r_2\in \{p_1,q_1\}$ (The case where $r_1 \in \{p_1,q_1\}$ and $r_2=t_1$ is similar.);
\item[(1)-2] $r_1=t_1$ and $r_2\in (V(P)\cup V(Q))\setminus\{p_1,q_1\}$ (The case where $r_1 \in (V(P)\cup V(Q))\setminus\{p_1,q_1\}$ and $r_2=t_1$ is similar.);
\item[(1)-3] $\{r_1,r_2\}\subset V(P)$ (The case where $\{r_1,r_2\}\subset  V(Q)$ is similar.);
 \item[(1)-4] $|V(P)\cap\{r_1,r_2\}|=|V(Q)\cap\{r_1,r_2\}|=1$.
\end{itemize}
For the case (1)-1, we let $\sigma=(13)$ , which satisfies the desired condition of the proposition.
For the cases (1)-2, (1)-3, (1)-4, using $R_1$ and $R_2$, we reach a contradiction by finding $C$ or $\Theta$ described in Lemma~\ref{lem:odd:cycles}~(i) or (ii).
See Figure~\ref{fig:proof:PQ}. For the cases (1)-2 and (1)-3, the thick lines show a cycle $C$ described in Lemma~\ref{lem:odd:cycles}~(i). For the case (1)-4, the thick lines show an $(r_1,r_2)$-theta graph $\Theta$ described in Lemma~\ref{lem:odd:cycles}~(ii).

\begin{figure}[!ht]
\includegraphics[page=3,width=16.5cm]{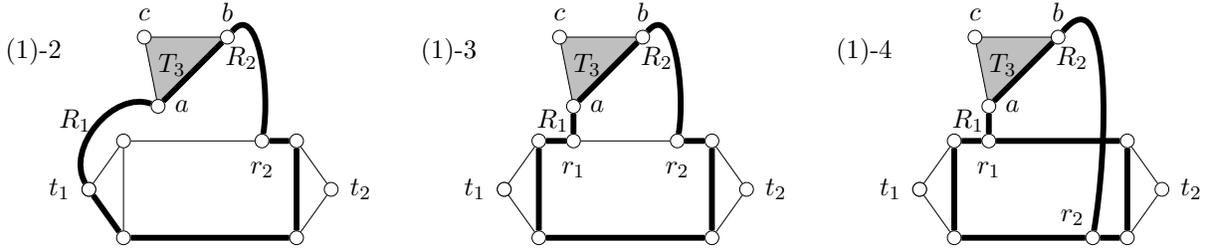}
\caption{Illustrations for (Case 1)}\label{fig:proof:PQ}
\end{figure}

\noindent{(Case 2)} Now suppose that $|V(T_3) \cap U|\ge 2$. 
If $V(T_3) \cap U$ contains $\{t_1,t_2\}$, then $P$, $Q$, and $t_1t_2$ are three vertex-disjoint $(V(T_1), V(T_2))$-paths, which is a contradiction by Lemma~\ref{lem:odd:cycles}~(iii).
Thus we may assume that $t_2\not\in V(T_3)$. Then $|V(T_3)\cap V(P)|\ge 1$ by the choice of $P$, and so let $a$ be the vertex in $V(T_3)\cap V(P)$ that is closest to $p_1$. 
By  the minimality of $\ell(P)$, $|V(T_3)\cap V(P)|\le 2$.
We suppose that it is not the case where $|V(T_{3}) \cap V(P)|= 2$ and $|V(T_{3}) \cap V(Q)|=1$, since it is the desired condition of the proposition.
Then one of the following four cases must hold.
\begin{itemize} 
\item[(2)-1]  $|V(T_{3}) \cap V(P)|= 2$, $t_1 \in V(T_3)$, $|V(T_{3}) \cap V(Q)|=0$;
\item[(2)-2]  $|V(T_{3}) \cap V(P)|= 2$, $t_1 \not\in V(T_3)$, $|V(T_{3}) \cap V(Q)|=0$;
\item[(2)-3]  $|V(T_{3}) \cap V(P)|=1$, $t_1\not\in V(T_3)$ (and therefore $|V(T_{3}) \cap V(Q)|=1$ by the case assumption);
\item[(2)-4]  $|V(T_{3}) \cap V(P)|=1$, $t_1\in V(T_3)$, $|V(T_{3}) \cap V(Q)|=0$;
\item[(2)-5]  $|V(T_{3}) \cap V(P)|=1$, $t_1\in V(T_3)$, $|V(T_{3}) \cap V(Q)|=1$.
\end{itemize}
For the cases (2)-1, (2)-2, (2)-3, and (2)-4, we reach a contradiction by finding $C$ or $\Theta$ described in Lemma~\ref{lem:odd:cycles}~(i) or (ii). See Figure~\ref{fig:proof:PQ:2}. For the cases (2)-1, (2)-2, and (2)-4, the thick lines show a cycle $C$ described in Lemma~\ref{lem:odd:cycles}~(i).
For the case (2)-3, the thick lines show a theta graph $\Theta$ described in Lemma~\ref{lem:odd:cycles}~(ii). 
For the case (2)-5, see the last figure of Figure~\ref{fig:proof:PQ:2}, there are  three  vertex-disjoint $(V(T_1), V(T_3))$-paths, which is a contradiction to Lemma~\ref{lem:odd:cycles}~(iii).
Note that in the cases (2)-1, (2)-4 and (2)-5, the vertices $p_1$ and $a$ are distinct, since two triangles share at most one vertex.

\begin{figure}[!ht]
\centering
\includegraphics[page=4,width=17cm]{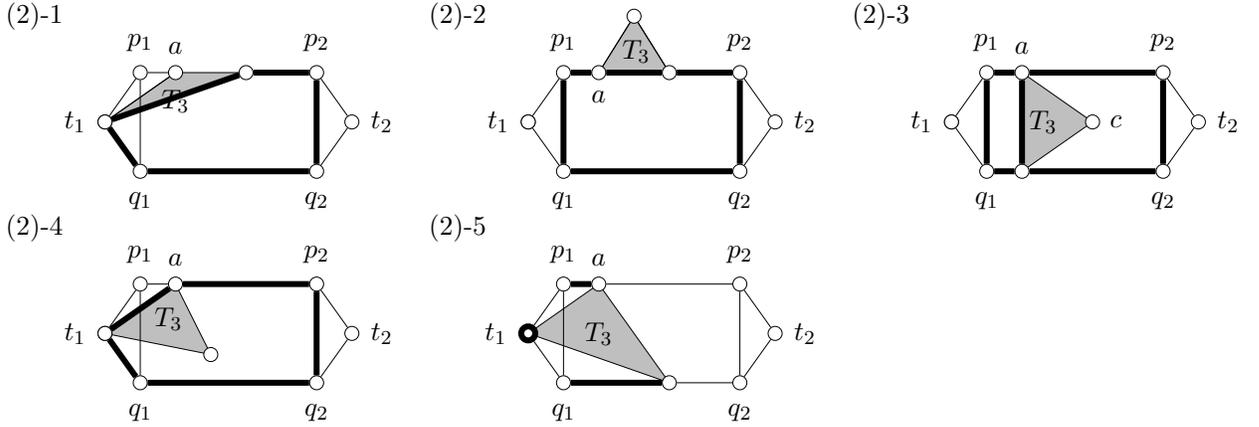}     
\caption{Illustrations for (Case 2)}\label{fig:proof:PQ:2}
\end{figure}
\end{proof}

\section{Proof of Theorem~\ref{thm:main}}
Let $G$ be a minimal counterexample to Theorem~\ref{thm:main}, and let $n=|V(G)|$. 
That is, $G$ is a $2$-connected graph that contains no $\modfour{0}$-cycle, $e(G) > \frac{3n-1}{2}$, and every $2$-connected graph $G'$ of order less than $n$ has at most $\frac{3|V(G')|-1}{2}$ edges. It is easy to observe that if $n\le 5$, then $G$ must be a cycle, and so we assume that $n\ge 6$ in the following.
Then by Theorem~\ref{thm:planar}~(ii), $G$ is planar, and assume that $G$ is with a fixed plane embedding.
The following lemma holds, where its proof is given later.

\begin{lemma}\label{lem:main}
    In the graph $G$, there are at most two $3$-faces and at most five $5$-faces.
\end{lemma}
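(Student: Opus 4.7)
The plan is to prove both bounds by contradiction, using Proposition~\ref{clm:triperm}, Lemma~\ref{lem:odd:cycles}, and the plane embedding of $G$.

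First I would record a preliminary observation. Two distinct triangles of $G$ cannot share an edge, since such a configuration contains a $C_4$, which is a $\modfour{0}$-cycle; hence the $3$-faces of $G$ are pairwise edge-disjoint. Similarly, if two distinct $5$-faces shared a path of $\ell \ge 1$ consecutive boundary edges, deleting those edges would merge the two faces into a single region bounded by a cycle of length $10 - 2\ell$ in $G$. This is divisible by $4$ precisely when $\ell \in \{1,3\}$, so distinct $5$-faces share either no edge or exactly a length-$2$ boundary subpath.

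\textbf{At most two $3$-faces.} Suppose, for contradiction, that $G$ has three $3$-faces $T_1, T_2, T_3$. By the preliminary, these are three pairwise edge-disjoint triangles, i.e.\ edge-disjoint odd cycles of length $\equiv 3\pmod{4}$. Proposition~\ref{clm:triperm} gives a permutation $\sigma$ (WLOG the identity) and vertex-disjoint $(V(T_1),V(T_2))$-paths $P, Q$ with $ab \in E(P)$ and $c \in V(Q)$ where $V(T_3) = \{a,b,c\}$. Since $T_3$ bounds a face, the edge $ab$ separates $T_3$ (whose only ``inside'' vertex is $c$) from the rest of $G$; thus the continuations of $P$ past $a$ and $b$, together with all of $Q$, lie on the side of $ab$ opposite $T_3$. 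Using the cyclic edge-order at $a$, $b$, and $c$, I would extract from $P \cup Q \cup T_1 \cup T_2 \cup T_3$ either a cycle $C$ meeting each $T_i$ in a proper subpath with pairwise-distinct outer vertices (contradicting Lemma~\ref{lem:odd:cycles}~(i)), a theta subgraph similarly meeting the $T_i$'s (contradicting~(ii)), or three vertex-disjoint $(V(T_i),V(T_j))$-paths (contradicting~(iii)).

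\textbf{At most five $5$-faces.} Suppose six $5$-faces $C_1,\ldots,C_6$ exist; each is an odd cycle of length $\equiv 1 \pmod{4}$, and by the preliminary any two share at most a path of length $2$, hence at most three common vertices. Using pigeonhole on the pairwise intersection pattern of the six faces, I would identify three of them in sufficiently general position — pairwise vertex-disjoint, sharing a single common vertex, or sharing a single length-$2$ subpath — and then, by routing along the boundary of a planar region containing the three chosen faces, construct a cycle $C$ or a theta subgraph $\Theta$ meeting each chosen $C_i$ in a subpath with pairwise-distinct outer vertices, contradicting Lemma~\ref{lem:odd:cycles}~(i) or~(ii).

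The hardest step in both parts will be the geometric case analysis imposed by the planar embedding. In the $3$-face case one must track the cyclic order of $P$ and $Q$ around $T_3$ through several subcases (e.g.\ whether $c$ is an endpoint or an interior vertex of $Q$, and how $P,Q$ intersect the triangles $T_1,T_2$). In the $5$-face case, enumerating how six $5$-cycles can pairwise overlap while remaining distinct faces, and still yielding a triple generic enough for Lemma~\ref{lem:odd:cycles} to apply, is where the bulk of the work will lie; translating each resulting configuration into the exact forbidden subgraph of Lemma~\ref{lem:odd:cycles} should then be routine.
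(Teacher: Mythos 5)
Your plan for both parts tries to derive a contradiction from a local configuration (the output of Proposition~\ref{clm:triperm}, respectively a triple of overlapping $5$-faces) using only Lemma~\ref{lem:odd:cycles} and planarity, but this cannot work, because those configurations are realizable in $2$-connected plane graphs without $\modfour{0}$-cycles: the lemma is a statement about the \emph{minimal counterexample} $G$, and every proof must use its minimality. Concretely, for the triangle part, take triangles $T_1,T_2$ with $V(T_i)=\{p_i,q_i,t_i\}$ and $T_3=abc$, add the edges $p_1a$ and $bp_2$ and the paths $q_1uc$ and $cvq_2$; this $11$-vertex graph is $2$-connected, planar with all three triangles bounding faces, realizes exactly the configuration of Proposition~\ref{clm:triperm} (with $P=p_1abp_2$, $Q=q_1ucvq_2$), and all of its cycles have lengths in $\{3,5,6,7,9,10,11\}$. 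So no contradiction can be extracted from $P\cup Q\cup T_1\cup T_2\cup T_3$ together with the face property of $T_3$, as you propose. The paper's proof instead shows that $\{p_3,q_3\}$ is not a vertex cut of $G$ -- this is where Lemma~\ref{lem:no_F_6}, the structural description of $2$-cuts of the minimal counterexample established in Subsection~3.1, enters -- and only the resulting extra $(A,B)$-path $R$ avoiding $\{p_3,q_3\}$, analyzed in six cases, produces the forbidden subgraphs of Lemma~\ref{lem:odd:cycles} or an even theta graph. Your sketch never produces such an additional path and never invokes minimality, so it has a genuine gap at its core.

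The $5$-face part has the same defect in two places. First, you allow a triple of pairwise vertex-disjoint $5$-faces as a ``general position'' case, but two vertex-disjoint $5$-cycles, both faces, can coexist in a $2$-connected plane graph without $\modfour{0}$-cycles (join two $5$-cycles by two disjoint paths of lengths $2$ and $3$ attached at distance-$2$ vertices of each); the paper rules out disjoint $5$-cycles in $G$ by combining Menger's theorem, Lemma~\ref{lem:previous}~(ii), and again Lemma~\ref{lem:no_F_6}. Only after that does $r(3,3)=6$ apply; your pigeonhole with three intersection types (disjoint, one vertex, length-$2$ path) on six faces does not even yield a homogeneous triple. Second, three $5$-faces pairwise meeting in a length-$2$ path form the $7$-vertex configuration of the paper's Claim~\ref{clm:5fat3p}, whose cycles have lengths $3$, $5$, $6$ only; it contains no $\modfour{0}$-cycle, so Lemma~\ref{lem:odd:cycles}~(i)/(ii) cannot close this case. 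The paper's contradiction there is an edge count: the faceness of the three cycles guarantees $G-\{v,w,w_1,w_2\}$ is $2$-connected, and minimality of $G$ forces $e(G)\le\frac{3n-1}{2}$. Likewise, the vertex-intersection case needs Lemma~\ref{lem:odd:cycles}~(iv), Lemma~\ref{lem:previous}~(iii), and once more the $2$-cut structure from Lemma~\ref{lem:no_F_6}, none of which appear in your outline.
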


Let $f(G)$ be the number of faces of $G$, and $f_i$ be the number of $i$-faces of $G$.
Then   $f_3 \le 2$ and $f_5 \le 5$ by Lemma~\ref{lem:main}.
Since $G$ has no $\modfour{0}$-cycles, $f_4=0$.
Thus, by Euler's formula,
\[ n+f(G) =2+e(G)=2+\frac{1}{2}\sum_{i\ge 3}if_i \ge 2+ 3f(G) -\frac{3}{2}f_3-f_4-\frac{1}{2}f_5 \ge 3f(G)-\frac{7}{2}  \]
and so  $f(G) \le \frac{2n+7}{4}$. 
Since $f(G)$ is an integer, $f(G) \le \frac{2n+6}{4}=\frac{n+3}{2}$.
Therefore,  $e(G) = n+ f(G)-2  \le \frac{3n-1}{2}$,
which is a contradiction.

To prove Lemma~\ref{lem:main}, we first examine useful structural properties of a vertex cut  in our minimal counterexample $G$ in Subsection 3.1, and then establish the proof of the lemma in Subsection 3.2.
 
\subsection{The structure of vertex cuts of $G$}

In this subsection, we aim to explore the structural properties of a vertex cut in the minimal counterexample $G$. 
By Theorem~\ref{thm:3con},  $G$ is not $3$-connected.  By Theorem~\ref{thm:planar}~(iii), $G$ is non-bipartite. The following lemma is the final goal of the subsection. We often recall the six graphs $F_i$'s in Figure~\ref{fig:gadget}.

\begin{lemma}\label{lem:no_F_6}
Let $X=\{x,y\}$ be a vertex cut of $G$. Then $G-X$ has exactly two connected components and there exists a connected component $S$ of $G-X$ such that $G[V(S) \cup X]$ is one of $K_3$, $F_3$, and $F_4$.
\end{lemma}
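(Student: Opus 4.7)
The plan establishes two claims in sequence: (A) the cut $X=\{x,y\}$ separates $G$ into exactly two components, and (B) at least one of the resulting pieces $G[V(S)\cup X]$ equals $K_3$, $F_3$, or $F_4$. For each connected component $S$ of $G-X$, write $H_S:=G[V(S)\cup X]$, $n_S:=|V(H_S)|$, and let $L_S\subseteq\{0,1,2,3\}$ denote the type of $(x,y)$ in $H_S$. The central observation is a cross-component parity constraint: for distinct components $S\neq S'$, concatenating any $(x,y)$-path of $H_S$ with any $(x,y)$-path of $H_{S'}$ produces a cycle of $G$, so $\ell+\ell'\not\equiv 0\pmod{4}$ for every $\ell\in L_S$ and $\ell'\in L_{S'}$. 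Consequently at most one $L_S$ contains $0$, at most one contains $2$, and the presence of $1$ in some $L_S$ precludes $3$ from every other $L_{S'}$. Moreover, whenever $L_S\subseteq\{0,2\}$ or $L_S\subseteq\{1,3\}$, Proposition~\ref{prop:folklore} forces $H_S$ to be bipartite and Theorem~\ref{thm:planar}~(iii) then yields $e(H_S)\le\frac{3(n_S-2)}{2}$. For a piece with $3\notin L_S$, adding (if absent) the edge $xy$ to $H_S$ produces a $2$-connected graph still free of $\modfour{0}$-cycles, and the minimality of $G$ applied to it gives $e(H_S)\le\frac{3n_S-1}{2}-[xy\notin E(H_S)]$; a symmetric reduction (attaching a short $(x,y)$-path of appropriate length) handles the case $3\in L_S$.

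For part (A), I will suppose $G-X$ has $k\ge 3$ components and derive a contradiction. The constraints above force at least $k-2$ of the types $L_{S_i}$ to be contained in $\{1,3\}$, so at least $k-2$ pieces are bipartite. Summing the bipartite bounds on these pieces with Proposition~\ref{prop:gadget} (or the minimality-based bound) on the at most two non-bipartite pieces, together with $n=\sum_i n_{S_i}-2(k-1)$ and $e(G)=\sum_i e(H_{S_i})-(k-1)[xy\in E(G)]$, will yield $e(G)\le\frac{3n-1}{2}$. The most delicate subcase is $k=3$ with two non-bipartite pieces, where the type-specific bounds of Proposition~\ref{prop:gadget} (the $\frac{3n_S-4}{2}$ bound for type $\{0,3\}$, and the $\frac{3n_S-3}{2}$ bounds for $\{0,1\}$ and $\{2,3\}$) are needed in place of the weaker generic bound.

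For part (B), I will suppose $k=2$ and that neither $H_{S_1}$ nor $H_{S_2}$ equals $K_3$, $F_3$, or $F_4$. I will enumerate the compatible pairs $(L_{S_1},L_{S_2})$ under the cross-component constraint, and bound each $e(H_{S_i})$ by (i) the bipartite bound, (ii) Proposition~\ref{prop:gadget} when $n_{S_i}$ lies in its range, or (iii) the minimality-based bound on an augmentation of $H_{S_i}$. Each extremal subcase, where $H_{S_i}$ realizes the equality in Proposition~\ref{prop:gadget} and is therefore reversing-equivalent to $F_6$, $F_7$, $F_8$, or $F_9$, is handled separately: the explicit structure of the gadget lets me select an $(x,y)$-path of a convenient residue modulo $4$, which combined with a path of matching residue from $H_{S_{3-i}}$ produces a $\modfour{0}$-cycle in $G$, a contradiction. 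In every remaining subcase the summed bounds give $e(G)\le\frac{3n-1}{2}$, contradicting the hypothesis on $G$.

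The main obstacle will be the case analysis in (B), particularly the mixed-type pairings such as $(\{1,2\},\{0,1\})$ and $(\{2,3\},\{0,3\})$ where the generic per-piece bounds do not immediately sum below $\frac{3n-1}{2}$. Ruling these out will require invoking the extremal structural description of Proposition~\ref{prop:gadget}---possibly after a reversing operation to normalize a piece---and then either exhibiting an explicit $\modfour{0}$-cycle in $G$ or improving the edge bound enough to reach the final contradiction. A further subtlety is that the conclusion requires $H_S$ to equal one of $K_3$, $F_3$, $F_4$ literally rather than merely to be reversing-equivalent, so at the end I must use the explicit extremal description combined with the observation that, among graphs of the relevant types on few vertices, the only possibilities matching the inferred structure are precisely $K_3$, $F_3$, and $F_4$.
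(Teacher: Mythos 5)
There is a genuine gap, and it sits at the heart of the lemma: your mechanism for disposing of the extremal/mixed-type cases cannot work. Take the case $L_{S_1}=\{0,3\}$ with $G[V(S_1)\cup X]$ reversing-equivalent to $F_6$ (so $n_1=6$, $e=7$) and $L_{S_2}=\{2,3\}$. These types are compatible by construction: every sum $\ell_1+\ell_2$ with $\ell_1\in\{0,3\}$, $\ell_2\in\{2,3\}$ lies in $\{1,2,3\}\pmod 4$, so no choice of an $(x,y)$-path in the $F_6$-side ``of a convenient residue'' can be closed up through the other side into a $\modfour{0}$-cycle --- the very definition of the types forbids it. Nor does counting help: with $e(G_1)=\frac{3n_1-4}{2}$ and $e(G_2)=\frac{3n_2-2}{2}$ (the bound the paper proves via identifying $x$ and $y$), one gets $e(G)=\frac{3n}{2}>\frac{3n-1}{2}$, i.e.\ the putative counterexample survives both Lemma~\ref{lem:basic:alpha:4} and Proposition~\ref{prop:gadget}. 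This is exactly why the paper first proves a weaker statement (Lemma~\ref{lem:cut:main1}, which still allows $F_6$ but extracts a unique common neighbor $z$ of $x,y$ on the other side) and then spends the bulk of the argument excluding $F_6$ by a structural analysis inside $G_2$: the ``nice pair'' machinery (an $(x,y)$-path in $G_2-z$ together with a path to $z$), parity claims about such pairs, the production of a second vertex cut, and finally two odd cycles of the same residue class joined by an even path, contradicting Lemma~\ref{lem:previous}~(i) after a reversing operation. Your proposal contains no substitute for this argument, and the step you propose in its place (``produce a $\modfour{0}$-cycle by matching residues'') is provably unavailable in precisely the case that matters.

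A secondary gap of the same flavor occurs in your treatment of bipartite pieces. If $G_1$ is bipartite with $n_1=4$ (a path of length $3$ between $x$ and $y$) and $L_{S_2}=\{2,3\}$, then your augmentation toolkit stalls: $G_2+xy$ contains a $\modfour{0}$-cycle (since $3\in L_{S_2}$), and the shortest admissible attached path has length $3$, which returns a graph on exactly $n$ vertices, so the minimality of $G$ cannot be invoked; heavier gadgets only make the graph larger. The paper's Lemma~\ref{lem:threecomp:2}~(ii) handles exactly this residual configuration with a separate argument (choosing the cut to maximize the bipartite side, analyzing a cut-vertex $z$ of $G_2$, determining the types at $\{x,z\}$ and $\{y,z\}$, and attaching $F_3$-gadgets there), and part (i) of that lemma is likewise needed to kill the $(\{0,1\},\{1,2\})$ pairing, which your parity-plus-counting framework also leaves alive. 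In short, the skeleton (two components via an even-theta/counting argument, then type analysis with Proposition~\ref{prop:gadget} and minimality-based augmentations) matches the paper, but the two hardest cases --- the $F_6$ exclusion and the small bipartite piece against a $\{2,3\}$-type side --- are exactly where your plan's tools fail, and they require the additional structural arguments the paper supplies.
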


We start with a simple observation.

\begin{lemma}\label{lem:basic:alpha:4}
Let $X=\{x,y\}$ be a vertex cut of $G$, and let $(Z_1,Z_2)$  a bipartition of $V(G)-X$ such that there are no edges between $Z_1$ and $Z_2$. 
Suppose that $H_i=G[Z_i\cup X]$ and $e(H_i)\le \frac{3|V(H_i)|-1-\alpha_i}{2}$ for each $i\in\{1,2\}$. Then $\alpha_1+\alpha_2\le 4-2e(G[X])$. 
\end{lemma}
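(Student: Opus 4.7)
The plan is to apply a direct double counting that combines the hypothesized bounds on $H_1, H_2$ with the assumption that $G$ is a minimal counterexample. First I would record the two basic identities coming from the vertex cut. Since $V(H_1)\cap V(H_2)=X$ has size $2$, we have $|V(H_1)|+|V(H_2)|=n+2$. Since $(Z_1,Z_2)$ is a bipartition of $V(G)-X$ with no edges across, every edge of $G$ lies in $H_1$ or $H_2$, and the edges lying in both are exactly those of $G[X]$; thus $e(G)=e(H_1)+e(H_2)-e(G[X])$.

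Next I would plug the hypothesis $e(H_i)\le \frac{3|V(H_i)|-1-\alpha_i}{2}$ into this identity to obtain
\[
e(G)\ \le\ \frac{3|V(H_1)|-1-\alpha_1}{2}+\frac{3|V(H_2)|-1-\alpha_2}{2}-e(G[X])\ =\ \frac{3n+4-\alpha_1-\alpha_2-2e(G[X])}{2}.
\]
On the other hand, because $G$ is a minimal counterexample to Theorem~\ref{thm:main}, it satisfies $e(G)>\frac{3n-1}{2}$. Chaining the two inequalities gives $3n-1<3n+4-\alpha_1-\alpha_2-2e(G[X])$, i.e.\ $\alpha_1+\alpha_2+2e(G[X])<5$. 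Since $\alpha_1,\alpha_2,e(G[X])$ are all integers, the strict inequality upgrades to $\alpha_1+\alpha_2\le 4-2e(G[X])$, as required.

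There is essentially no obstacle in the argument itself; it is a one-line counting that uses only the decomposition induced by the vertex cut and the defining inequality of the counterexample. The only thing to be careful about is the integrality step at the end, which is what produces the sharp bound $4-2e(G[X])$ rather than the weak bound $5-2e(G[X])$; this forces us to treat $\alpha_i$ implicitly as an integer, which will be automatic in every application, since in practice $\alpha_i$ will be defined as $3|V(H_i)|-1-2e(H_i)$ (or a lower bound thereof). The lemma itself serves purely as a bookkeeping device, packaging the edge-count contribution of each side of a vertex cut so that subsequent case analysis can conclude quickly once a sufficiently large $\alpha_i$ is established on one side.
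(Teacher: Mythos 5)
Your proof is correct and is essentially the paper's argument: the paper runs the same count $e(G)\le e(H_1)+e(H_2)-e(G[X])\le \frac{3(n+2)-2-(\alpha_1+\alpha_2+2e(G[X]))}{2}$ in contrapositive form, assuming $\alpha_1+\alpha_2\ge 5-2e(G[X])$ and contradicting $e(G)>\frac{3n-1}{2}$. The integrality point you flag is likewise implicit in the paper (its contradiction only rules out $\alpha_1+\alpha_2\ge 5-2e(G[X])$), and is harmless since the $\alpha_i$ are integers in every application.
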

\begin{proof}
Suppose that $\alpha_1+\alpha_2\ge 5-2e(G[X])$. Note that $|V(H_1)|+|V(H_2)|=n+2$.
Then \[ e(G)\le e(H_1)+e(H_2)-e(G[X])\le \frac{3(|V(H_1)|+|V(H_2)|)-2-(\alpha_1+\alpha_2+2e(G[X]))}{2} \le \frac{3n-1}{2},\]
which contradicts the choice of $G$.
\end{proof}

When $S$ is a connected component of $G-X$ for some vertex cut $X=\{x,y\}$ of $G$, every edge in $G[V(S)\cup X]$ is contained in an $(x,y)$-path of $G[V(S)\cup X]$. Therefore,  we often apply Proposition~\ref{prop:gadget} to $G[V(S)\cup X]$ when $G[V(S)\cup X]$ has a small number of vertices. 
In addition, Table~\ref{table:basic:1201/2303} shows the summations of the integers modulo $4$, where each cell collects $\ell_1+\ell_2  \pmod{4}$ for every $\ell_1$ and $\ell_2$
from the sets indicated by its row and column, respectively.

\begin{table}[h!]
\centering
\begin{tabular}{c||c|c|c|c|}
+ & $\{0,1\}$ &$\{1,2\}$  &$\{2,3\}$ & $\{0,3\}$  \\ \hline \hline
   $\{0,1\}$  & $\{0,1,2\}$ &$\{1,2,3\}$&$\{0,2,3\}$&$\{0,1,3\}$\\
    $\{1,2\}$&  &$\{0,2,3\}$&$\{0,1,3\}$&$\{0,1,2\}$\\
   $\{2,3\}$ & & &$\{0,1,2\}$&$\{1,2,3\}$\\
  $\{0,3\}$&  &  &  &$\{0,2,3\}$ \\
\end{tabular}
\caption{Summation table, taken modulo 4.}\label{table:basic:1201/2303}
\end{table}  

\begin{lemma} \label{lem:threecomp} 
Let $X=\{x,y\}$ be a vertex cut of $G$.
Then $G-X$ has exactly two connected components.
\end{lemma}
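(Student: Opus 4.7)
My plan is to argue by contradiction: suppose $G-X$ has $k\ge 3$ connected components $S_1,\ldots,S_k$. Writing $H_i=G[V(S_i)\cup X]$ and $n_i=|V(H_i)|$, I will use the identities $\sum_{i=1}^k n_i=n-2+2k$ and $e(G)=\sum_{i=1}^k e(H_i)-(k-1)e(G[X])$. By the $2$-connectivity of $G$ and Menger's theorem, each $H_i$ contains an $(x,y)$-path; moreover both $x$ and $y$ must have a neighbor in every $S_i$ (otherwise $G-x$ or $G-y$ would be disconnected).

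The first step is a parity argument. If three distinct components, say $H_{i_1}, H_{i_2}, H_{i_3}$, each contain an even $(x,y)$-path, then those three paths are pairwise internally vertex-disjoint (living in different components of $G-X$) and their union is an even $(x,y)$-theta graph, which by Theorem~\ref{thm:planar}(i) contains a $\modfour{0}$-cycle — a contradiction. Hence at most two of the $H_i$'s admit an even $(x,y)$-path, and after relabeling I may assume every $(x,y)$-path in $H_j$ is odd for each $j\ge 3$. Proposition~\ref{prop:folklore} then shows each such $H_j$ is bipartite, and $n_j\ge 4$ since otherwise the unique inner vertex $v$ of $H_j$ would force the even path $xvy$ of length $2$. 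Theorem~\ref{thm:planar}(iii) therefore gives $e(H_j)\le \frac{3(n_j-2)}{2}$ for all $j\ge 3$.

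Next I bound $e(G')$ where $G':=H_1\cup H_2$, a subgraph on $n_1+n_2-2$ vertices. Using that $x$ and $y$ each have a neighbor in $S_1$ and in $S_2$, a routine check shows $G'-x$, $G'-y$, and $G'-v$ for every $v\in S_1\cup S_2$ remain connected, so $G'$ is $2$-connected. Since $G'\subseteq G$ has no $\modfour{0}$-cycle and $|V(G')|=n-\sum_{j\ge 3}|V(S_j)|<n$, the minimality of $G$ yields
\[
e(G')\le \frac{3(n_1+n_2-2)-1}{2}.
\]

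Combining this with the bipartite bounds on $H_j$ for $j\ge 3$ and the identity $e(G)=e(G')+\sum_{j\ge 3}e(H_j)-(k-2)e(G[X])$, a straightforward simplification gives
\[
e(G)\le \frac{3n-1}{2}-(k-2)\,e(G[X]).
\]
If $xy\notin E(G)$ this already contradicts $e(G)>\frac{3n-1}{2}$; if $xy\in E(G)$ then $(k-2)\ge 1$ subtracts at least one more edge, giving the same contradiction. The delicate part I expect is the verification that $G'$ is genuinely $2$-connected (so that the minimality hypothesis of $G$ applies), together with the small-order check $n_j\ge 4$ needed to invoke Theorem~\ref{thm:planar}(iii). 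Once those are in place the edge-counting step is completely mechanical.
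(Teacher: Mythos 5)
Your proof is correct and follows essentially the same route as the paper's: rule out three pairwise internally disjoint even $(x,y)$-paths via the even-theta lemma (Theorem~\ref{thm:planar}(i)), deduce via Proposition~\ref{prop:folklore} that the remaining components are bipartite with at least four vertices so that Theorem~\ref{thm:planar}(iii) applies, and combine this with the minimality of $G$ applied to a $2$-connected union of the other parts to get an edge-count contradiction. The only cosmetic difference is the grouping: the paper keeps one bipartite component $G_1$ and applies minimality to $G-V(S_1)$ together with Lemma~\ref{lem:basic:alpha:4}, whereas you apply minimality to $H_1\cup H_2$, use the bipartite bound on every remaining component, and carry out the arithmetic directly.
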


\begin{proof}
Let $S_1,\ldots, S_r$ be the connected components of $G-X$, $G_i=G[V(S_i) \cup X]$ and $n_i=|V(G_i)|$ for each $i\in\{1,\ldots,r\}$. Suppose to the contrary that $r\ge 3$.
If there exists an $(x,y)$-path in $G_i$ of even length for each $i\in \{1,2,3\}$, then those three paths form an even theta graph, which is a contradiction to Lemma~\ref{thm:planar}~(i). 
Thus we may assume that every $(x,y)$-path in $G_1$ has odd length. 
By Proposition~\ref{prop:folklore}, $G_{1}$ is bipartite.
Clearly $n_1\ge 4$ and so by Theorem~\ref{thm:planar}~(iii),
$e(G_{1})\le \frac{3|V(G_1)|-6}{2}$. Note that $H_2=G-V(S_1)$ is a $2$-connected graph from our assumption that $r\ge 3$. By the minimality of $G$, $e(H_2)\le \frac{3|V(H_2)|-1}{2}$. We reach a contradiction to Lemma~\ref{lem:basic:alpha:4}. Hence, $r=2$.
\end{proof}

\begin{lemma} \label{lem:threecomp:2} 
Let $X=\{x,y\}$ be a vertex cut of $G$ such that $S_1$ and $S_2$ are the connected components of $G-X$. Let $G_i=G[V(S_i) \cup X]$, $n_i=|V(G_i)|$, and $L_i$ be the smallest subset of $\{0,1,2,3\}$ such that $(x,y)$ is an $L_i$-type in $G_i$ for each $i\in\{1,2\}$. Then each of the following holds.
\begin{itemize}
\item[\rm(i)]  If $L_1=\{0,1\}$ and $L_2=\{1,2\}$, then 
$n_2=3$. 
\item[\rm(ii)] For each $i\in\{1,2\}$, either $G_i$ is non-bipartite or $n_i=3$.
\end{itemize}
\end{lemma}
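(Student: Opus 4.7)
The plan is to prove (ii) via the bipartite extremal bound combined with Lemma~\ref{lem:basic:alpha:4}, and to prove (i) via a triangle-replacement argument exploiting the minimality of $G$.

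For (ii), suppose WLOG that $G_1$ is bipartite with $n_1 \ge 4$. Theorem~\ref{thm:planar}(iii) gives $e(G_1) \le \frac{3(n_1 - 2)}{2}$, so one can take $\alpha_1 \ge 5$ in the notation of Lemma~\ref{lem:basic:alpha:4}. Separately, a suitable invocation of minimality (applied to $G_2 + xy$, or to a related auxiliary graph in the exceptional case where adding $xy$ would introduce a $\modfour{0}$-cycle, where a parity argument on $L_1,L_2$ squeezes $G_2$ into a similarly bipartite regime) yields $e(G_2) \le \frac{3n_2-1}{2}$, so $\alpha_2 \ge 0$. Summing gives $\alpha_1 + \alpha_2 \ge 5 > 4 \ge 4 - 2e(G[X])$, contradicting Lemma~\ref{lem:basic:alpha:4}. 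Since $n_i \ge 3$ automatically, this forces $n_1 = 3$; the case $G_2$ bipartite is symmetric.

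For (i), suppose toward contradiction that $n_2 \ge 4$. The central construction is a \emph{triangle replacement}: let $G'$ be obtained from $G$ by replacing $G_2$ with a triangle $K_3$ on $\{x, y, v\}$ for a new vertex $v$. As the triangle is of $\{1,2\}$-type, cycles through $X$ in $G'$ have length in $L_1 + \{1,2\} = \{1,2,3\} \pmod 4$, avoiding $0$; together with the $\modfour{0}$-cycle-freeness of $G_1$, this shows $G'$ has no $\modfour{0}$-cycle. Moreover $G'$ is $2$-connected with $n' := n - n_2 + 3 < n$ vertices, so by minimality $e(G') \le \frac{3n' - 1}{2}$. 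Combining with $e(G) = e(G') + e(G_2) - 3$ and $e(G) > \frac{3n-1}{2}$ gives $e(G_2) > \frac{3n_2 - 3}{2}$. If $xy \notin E(G_2)$, then $G_2 + xy$ is $2$-connected and $\modfour{0}$-cycle-free (since $L_2 + 1 = \{2,3\}$), so minimality forces $e(G_2) \le \frac{3n_2-3}{2}$, contradicting the previous inequality. Hence $xy \in E(G_2)$, giving $e(G[X]) = 1$, so Lemma~\ref{lem:basic:alpha:4} yields $\alpha_1 + \alpha_2 \le 2$. When $n_1 \le 7$ and $n_2 \le 8$, Proposition~\ref{prop:gadget} gives $\alpha_1 \ge 2$ and $\alpha_2 \ge 1$, an immediate contradiction.

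The main obstacle is the residual ranges $n_1 \ge 8$ or $n_2 \ge 9$, which fall outside Proposition~\ref{prop:gadget}. I would address these by iterating the replacement idea with the extremal gadgets $F_7$ (for $G_1$) and $F_8$ (for $G_2$) in place of $K_3$: each such replacement sharpens the edge bound on the unreplaced side, eventually either contradicting $\alpha_1 + \alpha_2 \le 2$ directly or forcing $G_i$ to be reversing-equivalent to the extremal gadget $F_i$. In the latter extremal configuration, the explicit structure of $F_7$ and $F_8$ can be combined with the odd-cycle obstructions of Lemma~\ref{lem:odd:cycles} to derive a final contradiction. Coordinating this secondary replacement-and-structure analysis is the bulk of the remaining work.
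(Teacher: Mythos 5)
Your part (i) follows the paper's route: the triangle attachment $(G_1;x,y)\uplus(K_3;a,b)$ plus minimality, the split on whether $xy\in E(G)$ via $G_2+xy$, and then Proposition~\ref{prop:gadget} when both sides are small (your observation that the two bounds $\alpha_1\ge 2$, $\alpha_2\ge 1$ already contradict $\alpha_1+\alpha_2\le 2$ without the equality characterization is a mild simplification). But the regime you defer is exactly where the content lies, and ``iterating the replacement'' does not obviously close it: the attachments $(G_1;x,y)\uplus(F_8;a,b)$ and $(G_2;x,y)\uplus(F_7;a,b)$ have fewer than $n$ vertices only when $n_2\ge 9$ and $n_1\ge 8$, respectively, so the paper can use them only in the range $n_1\ge 8$ \emph{and} $n_2\ge 9$; in the mixed range (say $n_1\le 7$ with $G_1$ extremal and $n_2\ge 9$) no replacement shrinks the graph and there is nothing to iterate, so your sketch leaves a genuine hole there. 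Also be careful with types: the gadget glued onto the $\{0,1\}$-side must be of the matched type $\{1,2\}$ (i.e.\ $F_8$ or $K_3$) and vice versa ($F_7$ onto the $\{1,2\}$-side); read literally, ``$F_7$ for $G_1$ and $F_8$ for $G_2$'' as attachments would create $\modfour{0}$-cycles, so it must mean replacing those sides.

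The more serious gap is in (ii). Your argument needs $e(G_2)\le\frac{3n_2-1}{2}$, obtained from minimality applied to $G_2+xy$, and you dismiss the exceptional case (when $G_2$ has an $(x,y)$-path of length $\equiv 3\pmod 4$, so that $G_2+xy$ acquires a $\modfour{0}$-cycle) by claiming a parity argument squeezes $G_2$ into the bipartite regime. That claim is false: $\modfour{0}$-freeness only forces $\ell_1+\ell_2\not\equiv 0$ for $\ell_1\in L_1$, $\ell_2\in L_2$, so with $G_1$ bipartite one can have, e.g., $L_1=\{3\}$ or $L_1=\{0\}$ together with $L_2=\{2,3\}$ or $L_2=\{0,3\}$; then $G_2$ has $(x,y)$-paths of both parities, hence is non-bipartite, and moreover $G_2$ itself need not be $2$-connected, so minimality cannot be applied to it directly either. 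This is exactly where the paper's proof does its real work: it chooses the cut $X$ maximizing $n_1$, deduces that $G_2$ has a cut vertex $z$, uses the attachment $(G_2;x,y)\uplus P_{t+1}$ (Claim~\ref{cl:Hi}) to reduce to the single stubborn case $G_1=P_4$, and then finishes that case by applying part (i) to the cuts $\{x,z\}$ and $\{y,z\}$, the parity table, and $F_3$-attachments to the two pieces of $G_2-z$. None of this machinery appears in your proposal, and since you argue (ii) independently of (i) you also forgo the tool the paper uses to complete it; as written, your (ii) only covers the unexceptional case.
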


\begin{proof}
(i)  Suppose to the contrary that $L_1=\{0,1\}$, $L_2=\{1,2\}$, and $n_2 \ge 4$.
Let $H_1=(G_1;x,y) \uplus (K_3;a,b)$, where $a,b$ are two vertices of $K_3$, and 
let $H_2=G_2+xy$, which is the graph obtained by adding an edge $xy$ to $G_2$. 
Then both $H_1$ and $H_2$ are $2$-connected and have no $\modfour{0}$-cycles. 
Since $n_2 \ge 4$, it follows that $|V(H_1)|=n_1+1<n$.
Therefore, by the minimality of $G$, 
\[
e(G_1) = e(H_1)-3+e(G[X])\le \frac{3(n_1+1)-1}{2} - 3+e(G[X])=\frac{3n_1-4+2e(G[X])}{2}.\]
If $xy \notin E(G)$, then $e(G_1)\le \frac{3n_1-4}{2}$ and  $e(G_2) = e(H_2)-1 \le \frac{3n_2-1}{2}-1=\frac{3n_2-3}{2}$,
which contradicts to Lemma~\ref{lem:basic:alpha:4}. 
Therefore, $xy \in E(G)$, that is, $e(G[X])=1$. 
It also implies that  $e(G_1) \le \frac{3n_1-2}{2}$ and both $G_1$ and $G_2$ are $2$-connected. 
By the minimality of $G$, $e(G_2) \le \frac{3n_2-1}{2}$.

Suppose that $n_1 \le 7$ or $n_2\le 8$. 
Then either $e(G_1) \le \frac{3n_1-3}{2}$
or $e(G_2) \le \frac{3n_2-2}{2}$
by Proposition~\ref{prop:gadget}.
If $G_1$ is not reversing-equivalent to $F_7$ or $G_2$ is not reversing-equivalent to $F_8$, then by Proposition~\ref{prop:gadget} again, 
either $e(G_1) \le \frac{3n_1-4}{2}$
or $e(G_2) \le \frac{3n_2-3}{2}$, 
which contradicts Lemma~\ref{lem:basic:alpha:4}. 
If $G_1$ is reversing-equivalent to $F_7$ and $G_2$ is reversing-equivalent to $F_8$, then  $|V(G)|=13$ and $e(G)=19$, a contradiction. 

Suppose that $n_1 \ge 8$ and $n_2 \ge 9$. 
Let $H_1=(G_1;x,y) \uplus (F_8;a,b)$ and $H_2=(G_2;x,y) \uplus (F_7;a,b)$. 
Then for each $i\in \{1,2\}$, $|V(H_i)|< n$ and $H_i$ is a $2$-connected graph without $\modfour{0}$-cycles. 
By the minimality of $G$,
\[e(G_1)=e(H_1)-e(F_8)+1 \le \frac{3(n_1+6)-1}{2}-11+1=\frac{3n_1-3}{2}\]
\[e(G_2)=e(H_2)-e(F_7)+1 \le \frac{3(n_2+5)-1}{2}-9+1=\frac{3n_2-2}{2},\]
which contradicts Lemma~\ref{lem:basic:alpha:4}. 
Therefore, $n_2=3$. 

\medskip

\noindent (ii)  Suppose that to the contrary that there is a vertex cut $X$ such that $G_1$ is bipartite and $n_1\ge 4$. We choose a vertex cut $X$ of $G$ so that $n_1$ is maximized. 
Then $e(G_1)\le \frac{3n_1-6}{2}$ by Theorem~\ref{thm:planar}~(iii). If $G_2$ is $2$-connected, then $e(G_2)\le \frac{3n_2-1}{2}$ by the minimality of $G$, which is a contradiction to Lemma~\ref{lem:basic:alpha:4}. 
Then $G_2$ has a cut-vertex $z$ and it also holds that $xy\not\in E(G)$. Note that $z\not\in \{x,y\}$ and each of $\{x, z\}$ and $\{y,z\}$ is a vertex cut of $G$. 
Since $z$ is not a cut-vertex of $G$, $G_2-z$ has exactly two connected components $S'_1$ and $S'_2$ such that $x\in V(H_1)$ and $y\in V(H_2)$ where $H_i=G[V(S'_i)\cup \{z\}]$ for each $i\in\{1,2\}$. 
Clearly, each $H_i$  has no $\modfour{0}$-cycles. We also let $H'_1= G-(V(S'_2)\setminus\{y\})$ and $H'_2= G-(V(S'_1)\setminus\{x\})$. By the maximality of $n_1$, each of $H'_1$ and $H'_2$ is non-bipartite, and so each of $H_1$ and $H_2$ is non-bipartite.

\begin{claim}\label{cl:Hi}
$G_1$ is a path of length $3$.  (See Figure~\ref{fig:two:component}.)
\end{claim}

\begin{figure}[ht]
  \centering
\includegraphics[page=11,height=4cm]{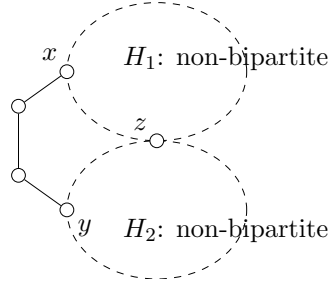}
\caption{An illustration for Claim~\ref{cl:Hi}}\label{fig:two:component}
\end{figure}

\begin{proof}
Take an $(x,y)$-path $Q$ in $G_1$. Let $\ell(Q) \equiv t \pmod{4}$ for some $t\in\{0,1,2,3\}$. Let $G^*_2=(G_2;x,y)\uplus P_{t+1}$. Then $G^*_2$ is $2$-connected, has no $\modfour{0}$-cycles, and $|V(G^*_2)|\le n$. Recall that $xy\not\in E(G)$.

Suppose that $|V(G^*_2)|<n$. Then $e(G^*_2)\le \frac{3|V(G^*_2)|-1}{2}$ by the minimality of $G$. 
Note that $|V(G^*_2)|=n_2+t-1$, and so $e(G^*_2)\le \frac{3|V(G_2^*)|-1}{2}=\frac{3n_2+3t-4}{2}$. 
If $t=0$, then $e(G_2)\le e(G_2^*)+1$ which implies that $e(G_2)\le \frac{3n_2-2}{2}$.
For $t\in\{1,2,3\}$, $e(G_2)\le e(G_2^*)-t \le \frac{3n_2+t-4}{2}$. 
In any case, $e(G_2)\le \frac{3n_2-1}{2}$, which is a contradiction to Lemma~\ref{lem:basic:alpha:4}. 
Thus $|V(G^*_2)|=n$, which implies that $n_1=4$ and so $G_1=P_4$.
\end{proof}

Since $G_1=P_4$ and $H_2$ is non-bipartite, it is clear that $|V(H'_2)|\ge 6$. Let $M_i$ be the smallest subset of $\{0,1,2,3\}$ such that $(x,z)$ is an $M_1$-type in $H_1$ and and $(y,z)$ is an $M_2$-type in $H_2$. Since $H_i$ is non-bipartite,  Proposition~\ref{prop:folklore} implies that $M_i$ is one of $\{0,1\}$, $\{1,2\}$, $\{2,3\}$, and $\{0,3\}$. 
If $(x,z)$ is a $\{1,2\}$-type in $H'_2$, then
$(y,z)$ is a $\{0,1\}$-type in $H'_1$,  which is a contradiction to (i) since each $H'_2$ has more than three vertices. Thus $(x,z)$ is not a $\{1,2\}$-type in $H'_2$. Similarly, $(y,z)$ is not a $\{1,2\}$-type in $H'_1$ by (i).
Thus, $M_i\neq \{2,3\}$ for each $i\in\{1,2\}$.
Note that there is no $(x,y)$-path $P$ in $G_2$ such that $\ell(P) \equiv 1 \pmod{4}$, which implies that $\ell_1+ \ell_2 \not\equiv 1  \pmod 4$ for every $\ell_1 \in M_1$ and $\ell_2 \in M_2$. Then
either $M_1=M_2=\{1,2\}$   or $M_1=M_2=\{0,3\}$ by Table~\ref{table:basic:1201/2303}.
If $M_1=M_2=\{1,2\}$, then $(x,z)$ is a $\{0,1\}$-type in  $H'_1$ and $(y,z)$ is a $\{0,1\}$-type in  $H'_2$, which implies that each of $H_1$ and $H_2$ has three vertices by (i). Then $e(G)\le9 \le \frac{3n-1}{2}$, which is a contradiction. Thus $M_1=M_2=\{0,3\}$.

Let $H^*_i=(H_i;x,z)\uplus(F_3;a,b)$. Then  $|V(H_i^*)|< n$ and $H_i^*$ is a $2$-connected graph that contains no $\modfour{0}$-cycles. By the minimality of $G$,
  $e(H^*_i) \le \frac{3|V(H^*_i)|-1}{2}=\frac{3|V(H_i)|+2}{2}$   and so 
$e(H_i) \le e(H^*_i)-2 \le \frac{3|V(H_i)|-2}{2}$. 
Then $|V(H_1)|+|V(H_2)|=n-1$ and so
\[ e(G) \le e(H_1)+e(H_2)+e(G_1) \le \frac{3(n-1)-4}{2}+3=\frac{3n-1}{2},\] which is a contradiction.
\end{proof}

\begin{lemma}\label{lem:cut:main1} Let $X=\{x,y\}$ be a vertex cut of $G$. There is a connected component $S$ of $G-X$ such that  $G[V(S)\cup X]$ is equal to one of $K_3$, $F_3$, $F_4$  and $F_6$. Moreover, if it is $F_6$, then $x$ and $y$ have a unique common neighbor in $G_{3-i}$. 
\end{lemma}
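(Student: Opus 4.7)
The plan is a case analysis on the types $L_1, L_2$ of the two components of $G-X$. By Lemma~\ref{lem:threecomp}, $G-X$ has exactly two components $S_1, S_2$; set $G_i=G[V(S_i)\cup X]$ and let $L_i$ be the smallest subset of $\{0,1,2,3\}$ making $(x,y)$ an $L_i$-type in $G_i$. First, if some $G_i$ is bipartite, Lemma~\ref{lem:threecomp:2}(ii) forces $n_i=3$; the unique vertex of $S_i$ is a common neighbor of $x,y$ by 2-connectivity, so $G_i$ is $F_3$ or $K_3$ depending on whether $xy\in E(G)$, and the conclusion holds.

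Assume henceforth that both $G_i$ are non-bipartite, so each $L_i$ contains residues of both parities. The absence of $\modfour{0}$-cycles forces $\ell_1+\ell_2\not\equiv 0\pmod 4$ for all $\ell_1\in L_1$ and $\ell_2\in L_2$. Checking Table~\ref{table:basic:1201/2303}, the only admissible pairs (up to swap) are $\{L_1,L_2\}=\{\{0,1\},\{1,2\}\}$ (Case A) and $\{L_1,L_2\}=\{\{0,3\},\{2,3\}\}$ (Case B). In Case A, Lemma~\ref{lem:threecomp:2}(i) forces the $\{1,2\}$-side to have $n_i=3$; being non-bipartite, it is $K_3$.

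The bulk of the work is Case B; WLOG $L_1=\{0,3\}$ and $L_2=\{2,3\}$. If $n_2=4$, a direct small-case check shows the only non-bipartite $\{2,3\}$-type 4-vertex graph with no $\modfour{0}$-cycle is $F_4$, giving $G_2=F_4$. Otherwise $n_2\ge 5$ (so $G_2\ne F_4$), and the argument proceeds by contradiction, assuming $G_1$ is not reversing-equivalent to $F_6$. The main tools are the parallel sums $(G_1;x,y)\uplus(F_4;a,b)$, $(G_1;x,y)\uplus(F_9;a,b)$, and $(G_2;x,y)\uplus(F_6;a,b)$: each is $2$-connected and $\modfour{0}$-cycle free, since the combined $L$-types avoid $0\pmod 4$, and each has fewer vertices than $G$ under mild hypotheses. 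Invoking the minimality of $G$ then yields $e(G_1)\le\frac{3n_1-3}{2}$ (from the $F_4$ sum, when $n_2\ge 5$), the sharper $e(G_1)\le\frac{3n_1-4}{2}$ (from the $F_9$ sum, when $n_2\ge 10$), and $e(G_2)\le\frac{3n_2-3}{2}$ (from the $F_6$ sum, when $n_1\ge 7$); in the small regimes $n_1\le 6$ and $n_2\le 9$ these bounds already follow from Proposition~\ref{prop:gadget}, with strict inequality unless the component is the corresponding extremal $F_n$.

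Summing $e(G_1)+e(G_2)-e(G[X])$ and comparing with $\lfloor\frac{3n-1}{2}\rfloor$ closes almost all sub-cases. The main obstacle is the regime $n_1\ge 7$, $n_2\in\{5,\dots,9\}$, $xy\notin E(G)$, and $n$ even, where the raw bounds only sum to $\frac{3n}{2}$; here saturation forces $n_2=9$ with $G_2$ reversing-equivalent to $F_9$ (since $F_6,F_7,F_8$ have the wrong $L$-type), and this last configuration is ruled out by applying Lemma~\ref{lem:odd:cycles} to the triangles of $F_9$ together with an odd cycle inherited from $G_1$, or by an ad-hoc sharper gadget sum exploiting a cut-vertex of $F_9$. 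Finally, for the moreover: when $G_i=F_6$, $2\in L_{3-i}$ guarantees at least one common neighbor of $x,y$ in $G_{3-i}$; two distinct common neighbors would form a $4$-cycle $xvyw$, a $\modfour{0}$-cycle, contradicting the assumption on $G$, so the common neighbor is unique.
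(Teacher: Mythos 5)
Your opening reductions (exactly two components via Lemma~\ref{lem:threecomp}, the table analysis, Case A via Lemma~\ref{lem:threecomp:2}~(i), and $n_2=4$ forcing $F_4$) match the paper, but the argument breaks down exactly in the decisive regime of Case B. Once $n_2\ge 10$ is established, your toolkit provides no upper bound on $e(G_2)$ at all: the sum $(G_2;x,y)\uplus(F_6;a,b)$ has fewer than $n$ vertices only when $n_1\ge 7$, and Proposition~\ref{prop:gadget} applies only when $n_2\le 9$. With only $e(G_1)\le\frac{3n_1-4}{2}$ in hand, Lemma~\ref{lem:basic:alpha:4} yields nothing (it needs bounds on both sides summing to $\alpha_1+\alpha_2\ge 5$), so in the regime $n_1\in\{5,6\}$, $n_2\ge 10$ you can neither eliminate $n_1=5$ nor force $G_1$ to be $F_6$ --- and this is precisely where the extremal configuration lives, so it cannot be ``closed'' by the raw sums. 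The paper's missing ingredient is the contraction step (Claim~\ref{clm:G_2^*}): let $G_2^*$ be $G_2$ with $x$ and $y$ identified. Since $(x,y)$ is a $\{2,3\}$-type, $G_2^*$ has no $\modfour{0}$-cycle; it is $2$-connected because a cut-vertex at the identified vertex would give $G-X$ a third component, contradicting Lemma~\ref{lem:threecomp}; and minimality gives $e(G_2^*)\le\frac{3(n_2-1)-1}{2}$, hence $e(G_2)\le\frac{3n_2-2}{2}$, and even $e(G_2)\le\frac{3n_2-4}{2}$ if $x,y$ have no common neighbor in $G_2$. This is what rules out $n_1=5$, saturates the count at $n_1=6$, and pins $G_1$ down as $F_6$ via the equality case of Proposition~\ref{prop:gadget}.

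The same omission invalidates your ``moreover'' step: $2\in L_{3-i}$ only guarantees an $(x,y)$-path of length $\equiv 2\pmod 4$ in $G_{3-i}$, which could have length $6$ or $10$; it does not produce a common neighbor. In the paper, the \emph{existence} of the common neighbor is again a saturation consequence of the $G_2^*$ bound (no common neighbor would force $e(G_2)=e(G_2^*)\le\frac{3n_2-4}{2}$, contradicting Lemma~\ref{lem:basic:alpha:4}); only \emph{uniqueness} follows from the absence of $4$-cycles, as you say. Finally, your elimination of the case $5\le n_2\le 9$ with $G_2$ reversing-equivalent to $F_9$ is left as two unverified alternatives; the paper's argument is concrete and different: the common neighbor $z$ of $a$ and $b$ inside $F_9$ makes $\{x,z\}$ a vertex cut of $G$ whose two sides are $\{1,2\}$- and $\{0,1\}$-typed, each with more than three vertices, contradicting Lemma~\ref{lem:threecomp:2}~(i). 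You should either reproduce that argument or actually carry out one of your proposed alternatives; as written, neither the $F_9$ case, the $n_1\le 6$ regime, nor the existence half of the ``moreover'' is proved.
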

\begin{proof} 
We let $S_1$ and $S_2$ be the connected components of $G-X$ by Lemma~\ref{lem:threecomp}. Let $G_i=G[V(S_i) \cup X]$, $n_i=|V(G_i)|$ and  $L_i$ be the smallest subset of $\{0,1,2,3\}$ such that $(x,y)$ is an $L_i$-type in $G_i$. 
If $n_i=3$ for some $i\in \{1,2\}$, then it is clear. Suppose that $n_i \ge 4$ for each $i\in \{1,2\}$.
For each $i\in \{1,2\}$, $G_i$ is non-bipartite by Lemma~\ref{lem:threecomp:2}~(ii) and so $L_i$ contains one of $\{0,1\}$, $\{1,2\}$, $\{2,3\}$ and $\{0,3\}$ by Proposition~\ref{prop:folklore}. 
By Table~\ref{table:basic:1201/2303}, we may assume that either $L_1=\{0,1\}$  and $L_2= \{1,2\}$, or $L_1= \{0,3\}$ and $L_2= \{2,3\}$.

If $L_1=\{0,1\}$ and $L_2= \{1,2\}$, then $n_2=3$ by Lemma~\ref{lem:threecomp:2}~(i).
Suppose that $L_1 =\{0, 3\}$ and $L_2 =\{2, 3\}$. 
If $n_2=4$, then $F_4$ is the only possible structure of $G_2$ to have $L_2 =\{2, 3\}$, which is a desired conclusion.  Suppose that $n_2 \ge 5$. Note that $0\in L_1$ and so $n_1\ge 5$.

\begin{claim}\label{clm:n1n2}
$n_2\ge 10$, $n_1\le 6$, and $e(G_1) \le \frac{3n_1-4}{2}$.
\end{claim}
\begin{proof} 
Let $H= (G_1;x,y) \uplus (F_4;a,b)$.
Then $|V(H)|=n_1+2<n$ and $H$ is a $2$-connected graph without $\modfour{0}$-cycles.
By the minimality of $G$, 
\[ e(G_1)= e(H)-4 \le \frac{3(n_1+2)-1}{2}-4 = \frac{3n_1-3}{2}. \]
Suppose to the contrary that $5 \le n_2 \le 9$. 
If $G_2$ is not reversing-equivalent to $F_9$, then $e(G_2) \le \frac{3n_2-4}{2}$ by Proposition~\ref{prop:gadget},  
which contradicts Lemma~\ref{lem:basic:alpha:4}.
Thus $G_2$ is reversing-equivalent to $F_9$.
Let $z$ be the common neighbor of $a$ and $b$ of $F_9$. 
Then $Y = \{x, z\}$ is a vertex cut of $G$ and let $G-Y$ have two connected components $Y_1$ and $Y_2$, say $D_i=G[V(Y_i) \cup Y]$ for each $i\in\{1,2\}$.
Then $(x,z)$ is a $\{1, 2\}$-type in $D_i$ and $(x,z)$ is a $\{0,1\}$-type in $D_j$ for $\{i,j\}=\{1,2\}$. Since both $D_1$ and $D_2$ have more than three vertices, it contradicts Lemma~\ref{lem:threecomp:2}~(i). Thus, $n_2\ge 10$. 

Suppose to the contrary that $n_1 \ge 7$. 
Let $H_1 = (G_1;x,y) \uplus (F_9;a,b)$ and $H_2=(G_2;x,y) \uplus (F_6;a,b)$. For each $i\in\{1,2\}$,  $|V(H_i)|<n$ and $H_i$ is a $2$-connected graph without $\modfour{0}$-cycles.
By the minimality of $G$, 
\[  e(G_1) = e(H_1)-12 \le \frac{3(n_1+7)-1}{2} -12=\frac{3n_1-4}{2}, \quad  e(G_2) = e(H_2)-7 \le \frac{3(n_2+4)-1}{2} -7=\frac{3n_2-3}{2},\]
which contradicts Lemma~\ref{lem:basic:alpha:4}.  
Therefore, $n_1\le 6$. By Proposition~\ref{prop:gadget}, $e(G_1) \le \frac{3n_1-4}{2}$.
\end{proof}

\begin{claim}\label{clm:G_2^*}
$x$ and $y$ have a unique common neighbor in $G_2$, and $e(G_2) \le \frac{3n_2-2}{2}$.
\end{claim}
\begin{proof} 
Let $G_2^*$ be the graph obtained from $G_2$ by identifying $x$ and $y$, where $v^*$ is the identified vertex of $x$ and $y$ in $G_2^*$.  Then $G_2^*$ has no $\modfour{0}$-cycles, since $(x,y)$ is a $\{2, 3\}$-type in $G_2$. 
If $G_2^*$ is not $2$-connected,
then the vertex $v^*$ is a cut-vertex of $G_2^*$, which implies that $G-X$ has at least three connected components, and it is a contradiction to Lemma~\ref{lem:threecomp}. Therefore $G_2^*$ is $2$-connected.

Note that $|V(G_2^*)|<n$ and $G^*_2$ is a $2$-connected graph without $\modfour{0}$-cycles. By the minimality of $G$, $e(G_2^*) \le \frac{3(n_2-1)-1}{2}$. 
If $x$ and $y$ have no common neighbor in $G_2$, then $e(G_2)=e(G^*_2) \le \frac{3n_2-4}{2}$, which contradicts Lemma~\ref{lem:basic:alpha:4} since $e(G_1) \le \frac{3n_1-4}{2}$ by Claim~\ref{clm:n1n2}. 
Thus $x$ and $y$ have a common neighbor $z$ in $G_2$. Then such $z$ is unique since $G_2$ has no $4$-cycles.
Then $e(G_2)=e(G^*_2)+1 \le \frac{3n_2-2}{2}$.
\end{proof}

By Claims~\ref{clm:n1n2} and \ref{clm:G_2^*}, 
$e(G_1) \le \frac{3n_1-4}{2}$ and $e(G_2) \le \frac{3n_2-2}{2}$.
By Lemma~\ref{lem:basic:alpha:4}, 
$e(G_1)= \frac{3n_1-4}{2}$ and $ e(G_2)=\frac{3n_2-2}{2}$. By Proposition~\ref{prop:gadget},
$G_1$ is reversing-equivalent to $F_6$. Since any graph reversing-equivalent to $F_6$ is isomorphic to $F_6$, $G_1$ is isomorphic to $F_6$
\end{proof}

Now we are ready to prove Lemma~\ref{lem:no_F_6}.

\begin{proof}[Proof of Lemma~\ref{lem:no_F_6}]
By Lemmas~\ref{lem:threecomp} and~\ref{lem:cut:main1}, it suffices to show that each $G_i$ is not equal to $F_6$, where
$S_1$ and $S_2$ are the connected components of $G-X$ and $G_i=G[V(S_i) \cup X]$. 
Suppose to the contrary that $G_1$ is equal to $F_6$. Then $x$ and $y$ have a common neighbor $z$ in $G_2$ by Lemma~\ref{lem:cut:main1}. 
Let $n_i=|V(G_i)|$. Recall that $(x,y)$ is a $\{0,3\}$-type in $G_1$ and a $\{2,3\}$-type in $G_2$.  Moreover, $\{x,y\}=\{a,b\}$ where $a$ and $b$ are two vertices in Figure~\ref{fig:gadget}.
For an $(x,y)$-path $P$ in $G_2-z$ and a $(V(P),z)$-path $Q$ in $G_2-\{x,y\}$, we call this pair $(P,Q)$ a \textit{nice pair}. 

First, we will show that a nice pair exists. Suppose to the contrary that $G_2-z$ is not connected. Then each of $Z_1=\{x,z\}$ and $Z_2=\{y,z\}$ is a vertex cut of $G$. There are a connected component $S_1'$ of $G-Z_1$ not containing $y$ and  a connected component $S_2'$ of $G-Z_2$ not containing $x$. Let $H_i=G[V(S_i')\cup Z_i]$. Since $(x,z)$ and $(y,z)$ is a $\{1, 2\}$-type in $H_i$, $K_3$ is the only possible case where two vertices of a vertex cut are adjacent by Lemma~\ref{lem:cut:main1}, which implies that $H_i=K_3$ for each $i\in \{1,2\}$. It is a contradiction to the fact that $(x,y)$ is a $\{2,3\}$-type in $G_2$.
Thus, $G_2-z$ is connected. Then there is an $(x,y)$-path in $G_2-z$. 
Note that $S_2=G_2-\{x,y\}$ is a connected component.
For every $(x,y)$-path $P$ in $G_2-z$, $V(P)\setminus\{x,y\}\neq \emptyset$ and so there is a $(V(P),z)$-path $Q$ in $G_2-\{x,y\}$.
Thus a nice pair always exists.

\begin{claim}\label{clm:pqmod4:1}
Let $(P,Q)$ be a nice pair, say $Q$ is a $(q,z)$-path. Let $R$  be a $(V(Q), V(P))$-path in $G_2-\{z,q\}$, say $R$ is  an $(r,r')$-path.
Then the following holds. 
\begin{itemize}
\item[\rm(i)]  $\ell(P)\equiv 3 \pmod{4}$ and $\ell(Q)$ is even.   
\item[\rm(ii)] $r'\in \{x,y\}$, the subpath of $P$ between $q$ and $r'$ is an odd path, and  $\ell(R)$ is odd. 
\end{itemize}
\end{claim}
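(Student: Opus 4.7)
The strategy is systematic cycle-counting modulo $4$, leveraging two principles throughout: (a) every cycle in $G$ has length $\not\equiv 0 \pmod 4$, and (b) any simple $(x,y)$-path $W$ in $G_2$ can be closed into a cycle $W + P'$ by attaching an $(x,y)$-path $P'$ in $G_1 = F_6$, so combining the $\{0,3\}$-type condition in $F_6$ with (a) yields two parity constraints on $\ell(W) \pmod 4$. I will use the cycle $P + xzy$ through the common neighbour $z$, the $\{2,3\}$-type condition for $(x,y)$ in $G_2$, and the shorthands $a = \ell(P[x,q])$, $b = \ell(P[q,y])$, $v = \ell(Q)$, and (once $R$ appears) $\gamma = \ell(Q[r,z])$, $\epsilon = \ell(Q[q,r])$, $u = \ell(R) + \gamma$, along with $\alpha = \ell(P[x,r'])$, $\delta = \ell(P[r',q])$ when $r' \in V(P[x,q])$.

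For (i): the cycle $P + xzy$ has length $\ell(P) + 2 \not\equiv 0 \pmod 4$, so $\ell(P) \not\equiv 2$, and combined with the $\{2,3\}$-type condition we get $\ell(P) \equiv 3 \pmod 4$, whence $a + b \equiv 3$. Next, the cycle $P[x,q] + Q + xz$ and its $F_6$-attached variant $P'_3 + P[x,q] + Q + zy$ (where $P'_3$ is an $(x,y)$-path in $F_6$ of length $\equiv 3 \pmod 4$) together force $a + v \in \{1,2\} \pmod 4$; analogously $b + v \in \{1,2\}$. Summing and using $a + b \equiv 3$ yields $2v \equiv 0 \pmod 4$, so $\ell(Q)$ is even.

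For (ii), I first show $r' \in \{x,y\}$ by contradiction. Suppose $r' \in V(P) \setminus \{x, q, y\}$; by symmetry take $r' \in V(P[x,q]) \setminus \{x,q\}$. The subgraph $P \cup Q \cup R \cup \{xz, yz\}$ smooths (removing the degree-$2$ vertices $x, y$) to a copy of $K_4$ on $\{z, q, r, r'\}$, and its seven underlying cycles, paired with their $F_6$-rerouted counterparts (replacing the arc $xzy$, or one of $xz, yz$, with an $(x,y)$-path of $F_6$), yield the constraints
\[
\alpha + u,\ b + \delta + u,\ a + v,\ b + v \in \{1,2\} \pmod 4,\quad \alpha + b + \epsilon + \ell(R) \equiv 3,\quad \delta + \epsilon + \ell(R) \not\equiv 0 \pmod 4.
\]
Summing the first four (using $a + b \equiv 3$, $a = \alpha + \delta$) forces both $u$ and $v$ even; substituting into the fifth simplifies it to $u + v \equiv \delta + 2\gamma \pmod 4$, which requires $\delta$ even. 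But then the sixth becomes $\delta + \epsilon + \ell(R) \equiv 2\delta \not\equiv 0 \pmod 4$, forcing $\delta$ odd---a contradiction, so $r' \in \{x,y\}$.

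Now assume $r' = x$ (the case $r' = y$ is symmetric), so $\alpha = 0$ and $\delta = a$. Repeating the analysis on the smaller smoothed $K_4$, together with the cycle $R + Q[r,q] + P[q,y] + P'$ and its variants, gives $b + \epsilon + \ell(R) \equiv 3 \pmod 4$; the cycle $R + Q[r,q] + P[x,q]$ forces $a + \epsilon + \ell(R) \not\equiv 0$. These combine with $a + b \equiv 3$ to give $a - b \not\equiv 1 \pmod 4$, so $a \in \{1,3\} \pmod 4$, i.e.\ the subpath $P[x,q]$ between $q$ and $r' = x$ has odd length. Finally, combining $u \in \{1,2\}$ with the identity $u + v = (\ell(R) + \epsilon) + 2\gamma$, the oddness of $\ell(R) + \epsilon$ (since $b$ is even), and $v$ even, I obtain $u$ odd hence $u \equiv 1$; then $v + b \equiv 2 + 2\gamma \pmod 4$ (from $\ell(R) + \epsilon \equiv 3 - b$) combined with $v + b \equiv 2$ forces $\gamma$ even, so $\ell(R) = u - \gamma$ is odd. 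The main obstacle is the careful bookkeeping: each proposed cycle must be verified simple (no vertex revisits), relying on the defining interior-disjoint conditions for $R$ and the fact that $V(P) \cap V(Q) = \{q\}$; and the separate parities of $\gamma$ and $\ell(R)$ cannot be recovered from $u = \ell(R) + \gamma$ alone, so the full system of modular constraints is essential.
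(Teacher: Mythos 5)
Your proposal is correct, but it takes a noticeably different route from the paper in the main part. For (i) you and the paper coincide on the cycle $P+yzx$; for the evenness of $\ell(Q)$ the paper instead forms the three internally disjoint $(x,q)$-paths $P[x,q]$, (length-$3$ $(x,y)$-path of $G_1$)$+\overleftarrow{P}[y,q]$, and $xz+\overleftarrow{Q}$ and invokes the even-theta-graph lemma, while you obtain the same parity from the two congruences $a+v,\,b+v\in\{1,2\}\pmod 4$ and summation---equivalent in substance. The real divergence is in (ii): the paper's device is a recursion on nice pairs---it defines the rerouted path $P^*$, notes that $(P^*,Q[r,z])$ and, when $r'\notin\{x,y\}$, $(P,\overleftarrow{R}+Q[r,z])$ are again nice pairs, re-applies part (i) to them, and then compares $\ell(P)\equiv\ell(P^*)\equiv 3\pmod 4$ to produce a $\modfour{0}$-cycle from $P[r',q]$, $Q[q,r]$, $R$. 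You never re-use (i) on derived pairs; instead you write down the full system of mod-$4$ constraints coming from cycles of the $K_4$-subdivision on $\{z,q,r,r'\}$ together with their $G_1$-reroutes, using both the length-$3$ and the length-$\equiv 0\pmod 4$ $(x,y)$-paths of $F_6$ (the latter does exist, e.g.\ the length-$4$ path, since $\{0,3\}$ is the minimal type of $(x,y)$ in $G_1$---worth stating explicitly, as the paper's own proof only ever uses the length-$3$ path), and you solve the system. Both arguments are valid: the simplicity of each of your cycles follows from $V(P)\cap V(Q)=\{q\}$, the internal disjointness of $R$ from $P\cup Q$, and $z\notin V(P)\cup V(Q[q,r])\cup V(R)$, exactly as you flag. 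The paper's recursion buys brevity and avoids the six-constraint bookkeeping; your computation is more mechanical and, as a small bonus, pins down directly that $r'$ must lie on the odd side of $q$ without the paper's WLOG normalization that $P[x,q]$ is even.
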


\begin{proof} (i) 
If $\ell(P)  \equiv 2 \pmod{4}$, then $P+yzx$ is a $\modfour{0}$-cycle,  a contradiction. 
Thus $\ell(P)  \equiv 3 \pmod{4}$, since $(x,y)$ is a $\{2,3\}$-type in $G_2$.
We may assume that $\ell(P[x,q])$ is even. Then $\ell(P[q,y])$ is odd. 
Note that there is an even $(x,q)$-path in $G$ using  an $(x,y)$-path of length three in $G_1$ and $P[q,y]$. 
Since there is no even $(x,q)$-theta graph by Lemma~\ref{thm:planar}~(i), $Q+zx$ is an odd path, and so $\ell(Q)$ is even.

\medskip

\noindent (ii) By (i), we may assume $P[x,q]$ is an even path and denote it by $P_e$. We denote $P[q,y]$ by $P_o$. Let $Q_1=Q[q,r]$ and $Q_2=Q[r,z]$. We define an $(x,y)$-path $P^*$ in $G_2-z$ as follows (see Figure~\ref{fig:noF6:cases}): If $r'\in V(P_o)$, then let $P^*: P_e+Q_1+R+P_o[r',y]$.  If $r'\in V(P_e)$, let $P^*:P_e[x,r']+\overleftarrow{R}+\overleftarrow{Q_1}+P_o$.  
Since $(P^*,Q_2)$ is a nice pair, $\ell(Q_2)$ is even by (i), and so $\ell(Q_1)$ is even.
\begin{figure}[!ht]
\centering
\includegraphics[height=4cm,page=6]{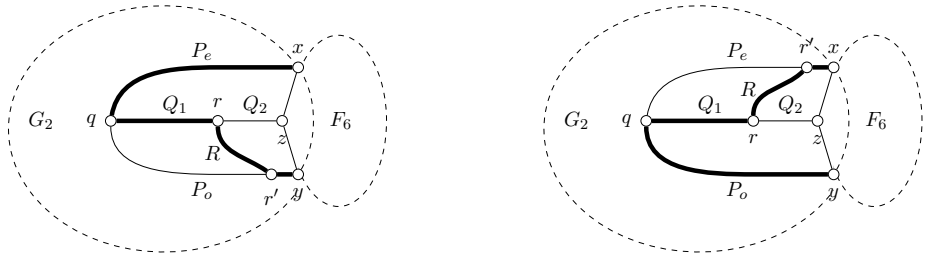}
\caption{Illustrations for Claim~\ref{clm:pqmod4:1}, where  thick lines show the path $P^*$}\label{fig:noF6:cases}
\end{figure}

Suppose to the contrary that $r'\neq y$.
If $r'\neq x$, then 
$(P,\overleftarrow{R}+Q_2)$ is a nice pair, which implies that $\ell(\overleftarrow{R}+Q_2)$ is even by (i) and so  $\ell(R)$ is even. 
If $r'=x$, then comparing the lengths of $P$ and $P^*$, $\ell(R)+\ell(Q_1)\equiv \ell(P_e)\pmod{4}$ and therefore $\ell(R)$ is even.
Hence, in any case, $\ell(R)$ is even and so $Q_1+R$ is an even path.
Comparing the lengths of $P$ and $P^*$, it follows that for the subpath $P'$ of $P$ between $r'$ and $q$, $\ell(P') \equiv \ell(R)+\ell(Q_1)\pmod{4}$  by (i). Then $Q_1+R$ and $P'$ form a $\modfour{0}$-cycle, which is a contradiction.
Therefore $r'=y$.  Moreover, by (i), $\ell(R)+\ell(Q_1)\equiv \ell(P_o) \pmod{4}$  and so $\ell(R)$ is odd.
\end{proof}

\begin{claim}\label{clm:end:Q:1}
There is a nice pair $(P,Q)$ such that $\ell(Q)=2$ and the ends of $Q$ form a vertex cut of $G$.
\end{claim}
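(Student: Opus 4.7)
The plan is to select a nice pair $(P, Q)$ with $\ell(Q)$ minimum among all nice pairs, then establish the two required properties in turn: that $\ell(Q) = 2$, and that the endpoints of $Q$ form a vertex cut of $G$.

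A nice pair exists by the paragraph preceding Claim~\ref{clm:pqmod4:1}; pick one with $\ell(Q)$ smallest. By Claim~\ref{clm:pqmod4:1}(i), $\ell(Q)$ is a positive even integer, so $\ell(Q) \ge 2$. Write $Q: q = q_0, q_1, \ldots, q_k = z$.

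For the first step I would assume $k \ge 4$ for contradiction. Two immediate consequences of minimality arise. First, no internal vertex $q_i$ lies on $V(P)$: otherwise $(P, Q[z, q_i])$ would be a nice pair with a strictly shorter $Q$ when $k - i$ is even, and would violate Claim~\ref{clm:pqmod4:1}(i) when $k - i$ is odd. Second, no $q_i$ with $1 \le i \le k - 1$ has a neighbor in $V(P) \setminus \{x, y\}$, because applying Claim~\ref{clm:pqmod4:1}(ii) to the single edge $q_i v$ as the path $R$ would force $v \in \{x, y\}$. Using these restrictions, I would reroute $P$ through $Q[q, q_1]$ together with a path from $q_1$ back to $V(P) \setminus \{q\}$ inside $G_2 - \{z, q\}$ (which exists by the $2$-connectedness of $G$), producing an alternative $(x, y)$-path $P^*$ in $G_2 - z$. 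Pairing $P^*$ with an appropriate prefix of $Q$ yields a nice pair with strictly smaller second coordinate, contradicting the choice of $(P, Q)$.

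Once $\ell(Q) = 2$ is established, write $Q: q, w, z$. For the second step, suppose for contradiction $\{q, z\}$ is not a vertex cut of $G$. Then in $G - \{q, z\}$, the vertex $w$ reaches $V(P) \setminus \{q\}$; by Claim~\ref{clm:pqmod4:1}(ii) every such $w$-to-$V(P)$ path in $G_2 - \{q, z\}$ must terminate in $\{x, y\}$. Combining one such $(w, \{x, y\})$-path with the triangle $xzy$ in $G_2$ and suitable segments of $P$ produces either a $\modfour{0}$-cycle directly or an even $(x, y)$-theta graph, contradicting Theorem~\ref{thm:planar}(i) and the hypothesis that $G$ avoids $\modfour{0}$-cycles.

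The main obstacle throughout is the case analysis in Step~1 when an internal vertex $q_i$ is adjacent to $x$ or $y$: such edges do not immediately produce a shorter nice pair. Resolving this requires exploiting the $\{0, 3\}$-type of $(x, y)$ in $G_1 = F_6$ alongside the triangle $xzy$ in $G_2$ to extract parity contradictions, typically through Lemma~\ref{lem:odd:cycles} or Theorem~\ref{thm:planar}(i) applied to a carefully chosen theta graph.
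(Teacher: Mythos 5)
Your Step~1 contains the decisive gap. To shorten $Q$ when $\ell(Q)\ge 4$ you need a path from $q_1$ (more generally from $V(Q)$) back to $V(P)$ inside $G_2-\{q,z\}$, and you justify its existence ``by the $2$-connectedness of $G$''. Two-connectedness only survives the deletion of one vertex; deleting the two vertices $q$ and $z$ may disconnect $G$, and indeed the statement you are proving asserts precisely that $\{q,z\}$ \emph{is} a vertex cut for a suitable nice pair, so the escape path can genuinely fail to exist. When $\{q,z\}$ is a cut your rerouting is impossible, and you cannot conclude $\ell(Q)=2$ without analysing that cut via Lemma~\ref{lem:cut:main1}: the component of $G-\{q,z\}$ disjoint from $V(P)\cup\{x,y\}$ induces (with $\{q,z\}$) one of $K_3$, $F_3$, $F_4$, $F_6$, and running a $(q,z)$-path through it either produces a nice pair with odd $\ell(Q)$ (impossible by Claim~\ref{clm:pqmod4:1}(i)) or a nice pair with $\ell(Q)=2$ whose ends form a cut. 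This is exactly the dichotomy the paper exploits, but in the opposite order: it supposes no nice pair has cut ends, takes $\ell(Q)$ minimal under that hypothesis, obtains the path $R$ legitimately (because $\{q,z\}$ is then not a cut), applies Claim~\ref{clm:pqmod4:1}(ii) to force $r'=y$, and gets the shorter nice pair $(P[x,q]+Q[q,r]+R,\,Q[r,z])$; the length-two conclusion then comes from Lemma~\ref{lem:cut:main1} together with Claim~\ref{clm:pqmod4:1}(i), not from global minimality. Since your proposal never confronts the cut case, the argument as written does not close. (Note also that you may only use Lemma~\ref{lem:cut:main1} here, not Lemma~\ref{lem:no_F_6}, since this claim sits inside the proof of the latter.)

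Two smaller points. The ``main obstacle'' you flag (an interior vertex $q_i$ adjacent to $x$ or $y$) needs no extra parity work with $G_1=F_6$: the edge $q_iy$ is itself a $(V(Q),V(P))$-path in $G_2-\{q,z\}$, Claim~\ref{clm:pqmod4:1}(ii) applies, and $(P[x,q]+Q[q,q_i]+q_iy,\;Q[q_i,z])$ is a strictly shorter nice pair. On the other hand, your Step~2 appeals to ``the triangle $xzy$'', which does not exist: $xy\notin E(G)$, since $(x,y)$ is a $\{2,3\}$-type in $G_2$; only the path $xzy$ of length two is available. Step~2 can in any case be finished more simply: once Claim~\ref{clm:pqmod4:1}(ii) forces the escape path $R$ from $w$ to end at $y$, the pair $(P[x,q]+qw+R,\;wz)$ would be a nice pair whose second path has odd length $1$, contradicting Claim~\ref{clm:pqmod4:1}(i).
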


\begin{proof} 
Suppose to the contrary that for every nice pair $(P,Q)$, the ends of $Q$ do not form a vertex cut of $G$. We take such $P$ and $Q$ so that
$\ell(Q)$ is as small as possible. Say $Q$ is a $(q,z)$-path. We may assume $P[x,q]$ is an even path and $P[q,y]$ is an odd path.
Since $\{q,z\}$ is not a vertex cut of $G$,
there is a $(V(Q), V(P))$-path $R$ in $G_2-\{q,z\}$.
Say $R$ is an $(r,r')$-path. 
By Claim~\ref{clm:pqmod4:1} (ii), $r'=y$. Then it is a contradiction to the choice of $Q$ by a nice pair $(P[x,q]+Q[q,r]+R, Q[r,z])$.
Therefore, for some nice pair $(P,Q)$, the ends of $Q$ form a vertex cut of $G$. By Lemma~\ref{lem:cut:main1} and Claim~\ref{clm:pqmod4:1} (i), $Q$ is a path of length two.
\end{proof}

We take a nice pair $(P,Q)$ satisfying Claim~\ref{clm:end:Q:1}. Say $Q$ is a $(q,z)$-path. Without loss of generality, by Claim~\ref{clm:pqmod4:1}~(i), we may assume $P[x,q]$ is an even path and denote it by $P_e$. We denote $P[q,y]$ by $P_o$. 

Suppose to the contrary that $Y=\{y,q\}$ is a vertex cut of $G$. 
By Lemma~\ref{lem:threecomp}, $G-Y$ has exactly two connected components, and 
the connected component of $G-Y$ containing the vertex $z$ has more than 6 vertices. Thus for the other connected component $D$, $H=G[V(D)\cup Y]$ contains $P[q,y]$ and it is one of $K_3$, $F_3$, $F_4$, and $F_6$ by Lemma~\ref{lem:cut:main1}.
It is a contradiction, since every $(q,y)$-path in $H$ has odd length modulo $4$. Therefore, there is a $(V(P_o), V(P_e)\cup V(Q))$-path $Q'$ in $G-\{y,q\}$, say $Q'$ is a $(q',t')$-path. 
Note that $\ell(Q)=2$ and so $t'\not\in V(Q)\setminus\{z\}$ by Claim~\ref{clm:end:Q:1}.

\begin{claim}\label{clm:Q1:1}
$(P,Q')$ is a nice pair, and $C: Q'+zy+\overleftarrow{P}[y,q']$ is an odd cycle.
\end{claim}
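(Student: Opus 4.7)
The plan is to prove the two assertions in sequence: first, that $t' = z$ (which makes $(P, Q')$ a nice pair), then that $\ell(C)$ is odd. As a preliminary I establish $\ell(P_o) \equiv 3$ and $\ell(P_e) \equiv 0 \pmod 4$: the cycle $P_o + yz + \overleftarrow{Q}$ has length $\ell(P_o) + 3$, and since $\ell(P_o)$ is odd and $G$ has no $\modfour{0}$-cycle we must have $\ell(P_o) \equiv 3 \pmod 4$; combined with $\ell(P) \equiv 3 \pmod 4$ this yields $\ell(P_e) \equiv 0 \pmod 4$. Throughout I set $\alpha = \ell(P_o[q, q'])$ and $\beta = \ell(P_o[q', y])$, so $\alpha + \beta \equiv 3 \pmod 4$; and if $t' \in V(P_e)$, also $\gamma = \ell(P_e[x, t'])$ and $\delta = \ell(P_e[t', q])$, so $\gamma + \delta \equiv 0 \pmod 4$.

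For the first assertion I argue by contradiction, assuming $t' \in V(P_e)$. Consider the $(x, y)$-path $P^* = P_e[x, t'] + \overleftarrow{Q'} + P_o[q', y]$ in $G_2 - z$; together with $xzy$ and a length-$4$ $(x,y)$-path $R_0$ in $G_1 = F_6$ (which exists since $(x, y)$ is $\{0, 3\}$-type in $F_6$), these three paths are internally vertex-disjoint, because $V(Q')$ is disjoint from $\{x, y, z\} \cup (V(G_1) \setminus \{x, y\})$. If $\ell(P^*)$ were even, we would obtain an even $(x, y)$-theta graph, and hence a $\modfour{0}$-cycle by Theorem~\ref{thm:planar}(i); so $\ell(P^*)$ is odd, and since $(x, y)$ is $\{2, 3\}$-type in $G_2$, $\ell(P^*) \equiv 3 \pmod 4$, equivalently $\ell(Q') \equiv 3 - (\beta + \gamma) \pmod 4$. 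Next, the cycle $P[q', t'] + Q'$ has length $\alpha + \delta + \ell(Q') \equiv 6 - 2(\beta + \gamma) \pmod 4$, and avoiding $\modfour{0}$ forces $\beta + \gamma$ to be even.

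I then split on the parity of $\gamma$. If $\gamma$ (hence $\beta$) is even, then $\alpha \equiv 3 - \beta \pmod 4$ is odd, and the simple cycle $Q' + \overleftarrow{P_e[x, t']} + R_3 + yz + \overleftarrow{Q} + P_o[q, q']$, where $R_3$ is a length-$3$ $(x, y)$-path in $G_1$, has length $\ell(Q') + \gamma + \alpha + 6 \equiv 1 + (\alpha - \beta) \equiv 1 + 3 \equiv 0 \pmod 4$. If $\gamma$ (hence $\beta$) is odd, then $\delta \equiv -\gamma \pmod 4$ is odd, and the simple cycle $Q' + \overleftarrow{P_e[q, t']} + Q + zy + \overleftarrow{P_o[q', y]}$ has length $\ell(Q') + \delta + \beta + 3 \equiv 6 + (\delta - \gamma) \equiv 6 - 2\gamma \equiv 0 \pmod 4$. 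In either case a $\modfour{0}$-cycle appears, contradicting the choice of $G$. Therefore $t' = z$; since $x$ cannot lie in $V(Q')$ (it would be an interior vertex lying in $V(P_e) \cup V(Q)$), we conclude $(P, Q')$ is a nice pair.

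For the second assertion, Claim~\ref{clm:pqmod4:1}(i) applied to the nice pair $(P, Q')$ yields $\ell(Q')$ even, and $\ell(C) = \ell(Q') + 1 + \beta$, so it suffices to show $\beta$ is even. If instead $\beta$ is odd, then $\alpha$ is even, and the simple cycle $C' = Q' + \overleftarrow{Q} + P_o[q, q']$ of length $\ell(Q') + 2 + \alpha$ forces $\ell(Q') + \alpha \equiv 0 \pmod 4$; then $\ell(C) \equiv 1 - (\alpha - \beta) \equiv 1 - 1 \equiv 0 \pmod 4$, using $\alpha - \beta \equiv 3 - 2\beta \equiv 1 \pmod 4$ for $\beta$ odd, so $C$ itself is a $\modfour{0}$-cycle, a contradiction. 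Hence $\beta$ is even and $\ell(C)$ is odd. The hardest step is the parity case analysis in Step 1, where the two explicit cycles must be constructed so that the mod-$4$ arithmetic collapses to $0$; each cycle must also be verified to be simple, which follows from tracking where $V(Q')$'s interior, $V(G_1) \setminus \{x, y\}$, and the subpaths of $P$ and $Q$ sit relative to the $\{x, y\}$- and $\{q, z\}$-cuts.
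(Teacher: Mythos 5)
Your proof is correct, but for the key step ($t'=z$) it takes a genuinely different route from the paper. The paper disposes of the cases $t'\in V(P_e)\setminus\{q,x\}$ and $t'=x$ by forming auxiliary nice pairs and invoking Claim~\ref{clm:pqmod4:1}~(ii) (the path $P[q,q']$, respectively $\overleftarrow{P_e}$, would violate that claim), whereas you redo the modulo-$4$ arithmetic from scratch: you first sharpen the parities to $\ell(P_o)\equiv 3$ and $\ell(P_e)\equiv 0\pmod 4$, pin down $\ell(Q')$ modulo $4$ from the $\{2,3\}$-type of the rerouted path $P^*$, force $\beta+\gamma$ even via the cycle $P[q',t']+Q'$, and then exhibit an explicit $\modfour{0}$-cycle in each parity subcase, one of them routed through $G_1=F_6$ via a length-$3$ $(x,y)$-path. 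For the second assertion the two arguments are essentially equivalent: the paper's even $(q',z)$-theta graph built from $Q'$, $P[q',y]+yz$ and $\overleftarrow{P}[q',q]+Q$ corresponds exactly to your pair of cycles $C'$ and $C$. Your version is more elementary and self-contained (it only needs Claim~\ref{clm:pqmod4:1}~(i)), at the cost of verifying simplicity of several constructed cycles and of leaning on the concrete structure of $F_6$; the paper's is shorter because Claim~\ref{clm:pqmod4:1}~(ii) already encapsulates that case analysis. One small point: the existence of a length-$4$ (and length-$3$) $(x,y)$-path in $G_1$ does not follow merely from $(x,y)$ being a $\{0,3\}$-type, since a type only restricts which lengths may occur; it does hold because $G_1$ is literally $F_6$, and in any case you could avoid $R_0$ entirely by noting that if $\ell(P^*)\equiv 2\pmod 4$ then $P^*+yzx$ is already a $\modfour{0}$-cycle.
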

\begin{proof}
If $t'\in V(P_e)\setminus\{q,x\}$, then 
$(P[x,t']+\overleftarrow{Q'}+P[q',y], P[t',q]+Q)$ is a nice pair and therefore $R:P[q,q']$ violates Claim~\ref{clm:pqmod4:1}~(ii), since $q'\not\in\{x,y\}$. Thus $t' \in \{x,z\}$. 
Suppose to the contrary that $t'=x$.
Then $(\overleftarrow{Q'}+P[q',y], \overleftarrow{P}[q',q]+Q)$ is a nice pair.
Then a path $R:\overleftarrow{P_e}$  violates Claim~\ref{clm:pqmod4:1}~(ii), since $\ell(R)$ is even. Thus $t'=z$ and so $(P,Q')$ is a nice pair. 
Then $\ell(Q')$ is even by Claim~\ref{clm:pqmod4:1}~(i), which also implies that $\ell(P[q,q'])$ is odd, otherwise $Q'$, $P[q',y]+yz$, and $\overleftarrow{P}[q',q]+Q$ make an even theta graph. Then 
 $\ell(P[q',y])$ is even, which implies that $C: Q'+zy+\overleftarrow{P}[y,q']$ is an odd cycle.
\end{proof}
By Claim~\ref{clm:Q1:1}, $C: Q'+zy+\overleftarrow{P}[y,q']$ is an odd cycle.
Note that since $F_6$ has a $3$-cycle and $5$-cycle, $G_1$ contains an odd cycle $C_0$ such that $\ell(C)\equiv \ell(C_0) \pmod{4}$. 
Consider a reverse $H$ of $G$ at $(x,y)$ with $G_1\setminus \{x,y\}$ so that $C$ and $C_0$ share a vertex $y$ in $G$ or $H$. It is clear that $H$ is also a minimal counterexample. Then there is a $(V(C_0),V(C))$-path of even length in $G$ or $H$ using $P_e+Q$ or an edge $xz$, which is a contradiction to Lemma~\ref{lem:previous}~(i).
 \end{proof}

\subsection{Proof of Lemma~\ref{lem:main}}

\begin{lemma}\label{lem:3face}
There are at most two triangles in $G$.  
\end{lemma}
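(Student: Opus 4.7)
Suppose for contradiction that $G$ contains three triangles $T_1,T_2,T_3$. Proposition~\ref{clm:triperm} yields (after relabeling) vertex-disjoint $(V(T_1),V(T_2))$-paths $P$ and $Q$ with $ab\in E(P)$ and $c\in V(Q)$, where $V(T_3)=\{a,b,c\}$. Write $P=P_A+ab+P_B$ with endpoints $p_1\in V(T_1),p_2\in V(T_2)$, and split $Q=Q_A+Q_B$ at $c$ with endpoints $q_1\in V(T_1),q_2\in V(T_2)$; put $\Gamma=T_1\cup T_2\cup T_3\cup P\cup Q$.

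A direct inspection shows that $\{a,c\}$ separates $\Gamma$ into two pieces: a ``$T_1$-side'' $T_1\cup(P_A-a)\cup(Q_A-c)$, and a ``$T_2$-side'' containing $T_2$, $P_B$, $Q_B-c$, and the vertex $b$ (tethered to $P_B$ via its $P_B$-neighbor). In the generic configuration (where $V(T_3)$ avoids $V(T_1)\cup V(T_2)$ and the endpoints of $P$ and $Q$ are distinct within each $V(T_i)$), each side contains a full triangle and hence at least three vertices. The central claim is that $\{a,c\}$ is a vertex cut of $G$ itself. Once this is established, Lemma~\ref{lem:no_F_6} is contradicted immediately: both sides together with $\{a,c\}$ have at least five vertices, while each of $K_3$, $F_3$, and $F_4$ has at most four.

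To prove the claim I assume the contrary: there is a path $R$ in $G\setminus\{a,c\}$ joining the two sides. Depending on where the endpoints of $R$ lie (inside $V(P_A)$, $V(Q_A)$, $V(P_B)$, $V(Q_B)$, at the non-$P$-non-$Q$ vertex of $T_1$ or $T_2$, or at $b$), the path $R$ combined with suitable subpaths of $P,Q,T_1,T_2,T_3$ produces one of the following forbidden configurations: three pairwise vertex-disjoint $(V(T_1),V(T_2))$-paths, contradicting Lemma~\ref{lem:odd:cycles}~(iii); a cycle or theta graph of the type forbidden by Lemma~\ref{lem:odd:cycles}~(i)--(ii); or an even theta graph, contradicting Theorem~\ref{thm:planar}~(i). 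The main technical obstacle is this case enumeration, together with the boundary cases in which $a$, $b$, or $c$ coincides with a vertex of $T_1$ or $T_2$ (arising from the ``Case~1'' subcases in the proof of Proposition~\ref{clm:triperm}, and in particular the situation in which all three triangles share a common vertex, where Lemma~\ref{lem:odd:cycles}~(iv) is invoked directly); these are dispatched by an analogous but notationally heavier analysis that in each subcase exhibits the same type of forbidden structure or an excess vertex forcing a vertex cut that again contradicts Lemma~\ref{lem:no_F_6}.
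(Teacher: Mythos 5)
Your overall framework is the same as the paper's, just run in the opposite logical direction: the paper first uses Lemma~\ref{lem:no_F_6} to rule out $\{a,c\}$ (its $\{p_3,q_3\}$) being a vertex cut and then derives a contradiction from the resulting $(A,B)$-path $R$, while you propose to rule out every such $R$ and then contradict Lemma~\ref{lem:no_F_6}. The problem is that the step you label ``the main technical obstacle'' is not an obstacle to be noted but the actual content of the lemma, and you never carry it out. Asserting that ``depending on where the endpoints of $R$ lie \dots $R$ combined with suitable subpaths produces one of the forbidden configurations'' is exactly the claim that needs proof; the paper spends six explicit cases on it, and in each one it must be checked that the cycle or theta graph really satisfies the hypotheses of Lemma~\ref{lem:odd:cycles}~(i)--(ii) (e.g.\ that the leftover vertices of $T_1,T_2,T_3$ are distinct) or that the three $(V(T_i),V(T_j))$-paths are genuinely vertex-disjoint. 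Moreover, at least one case cannot be dispatched by merely citing those lemmas: when $R$ runs from the $Q$-portion of the $T_1$-side to $b$ (the paper's case (6)), no forbidden cycle/theta configuration is directly visible, and the paper needs a mod-$4$ length computation (two applications of Lemma~\ref{lem:odd:cycles}~(iii) showing that $R$ and the relevant subpath of $Q$ have the same length modulo $4$) to build an even theta graph. Your proposal gives no indication of this argument, so the central claim that $\{a,c\}$ is a cut remains unproved.

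A secondary, fixable issue: your immediate contradiction with Lemma~\ref{lem:no_F_6} (``both sides together with $\{a,c\}$ have at least five vertices'') is only justified in your ``generic configuration,'' and you again defer the boundary cases where $T_3$ meets $T_1$ or $T_2$ to an unexecuted ``notationally heavier analysis.'' The clean way, which the paper uses and which handles all configurations uniformly, is to exploit that $a$ and $c$ are adjacent (they lie on $T_3$): among $K_3$, $F_3$, $F_4$ only $K_3$ has its two specified vertices adjacent, so Lemma~\ref{lem:no_F_6} would force one side plus $\{a,c\}$ to be a triangle, whereas each side plus $\{a,c\}$ has at least four vertices because $T_1$ and $T_2$ each share at most one vertex with $T_3$ (no $4$-cycles). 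Incorporating that observation and, above all, writing out the endpoint case analysis for $R$ would be needed before this counts as a proof.
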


\begin{proof}
Suppose to the contrary that $G$ has three  triangles $T_1$, $T_2$, and $T_3$.
Let $V(T_i)=\{p_i,q_i,t_i\}$ for each $i\in \{1,2,3\}$.
By Proposition~\ref{clm:triperm},  we may assume that there are two vertex-disjoint $(p_1,p_2)$-path $P$ and $(q_1,q_2)$-path $Q$ such that $p_3t_3\in E(P)$ and $q_3\in V(Q)$. We may assume that $p_3$ is closer to $p_1$ than $t_3$ along the path $P$. 
For simplicity, we define the following (see Figure~\ref{figure:triangle}):
\[\begin{array}{lll}
A_1 = V(P[p_1, p_3])-\{p_3\},\quad & A_2 = V(Q[q_1, q_3])-\{q_3\}, \quad&  A= \{t_1\} \cup A_1\cup A_2,\\
B_1 = V(P[t_3, p_2])-\{t_3\},\quad & B_2 = V(Q[q_3, q_2])-\{q_3\}, \quad&  B= \{t_2,t_3\} \cup B_1 \cup B_2.
\end{array}\] Note that $A_i$ or $B_i$ might be the empty set if a vertex of $T_3$ coincides with a vertex of $T_1$ or $T_2$. (For example, $A_1= \emptyset$ if $p_1 = p_3$.)
\begin{figure}[!ht]
\centering
  \includegraphics[page=7]{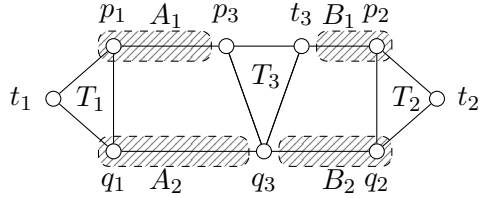}
\caption{An illustration for Lemma~\ref{lem:3face}, where the shaded parts represent $A_1$, $A_2$, $B_1$, and $B_2$.}\label{figure:triangle}
        \end{figure}

Suppose that $\{p_3, q_3\}$ is a vertex cut of $G$. 
Then $G-\{p_3, q_3\}$ has exactly two components $S_1$ and $S_2$ by  Lemma~\ref{lem:no_F_6}, one of which induces $K_3$ together with $\{p_3,q_3\}$, since $K_3$ is the only graph that two vertices of a vertex cut are adjacent. But we can observe that $G[V(S_i)\cup\{p_2,q_3\}]$ has at least four vertices, since two triangles share at most one vertex.
Therefore $\{p_3, q_3\}$ is not a vertex cut of $G$.
Hence, there is an $(A,B)$-path $R$ in $G-\{p_3,q_3\}$, say $R$ is an $(a,b)$-path. 
We divide the proof into following 6 cases, and one can check the following cases cover all possibilities:
\begin{itemize}
\item[(1)] $a\in A_1$ and $b\in B_1\cup \{t_3\}$;
\item[(2)] $a\in A_2$ and $b\in B_2$;
\item[(3)] $a\in A_2$ and $b\in B_{1}$ (The case where $a\in A_1$ and $b\in B_2$ is similar.);
\item[(4)]  $a=t_1$ and $b\in B_1\cup B_2$ 
(The case where $a\in A_1\cup A_2$ and $b=t_2$ is similar.);
\item[(5)] $a=t_1$, $b\in\{t_2,t_3\}$; 
\item[(6)] $a\in A_2$, $b=t_3$.
\end{itemize}
In each of the cases (1)$\sim$(3), we reach a contradiction since it is not difficult to find a cycle $C$ or a theta graph $\Theta$ described in Lemma~\ref{lem:odd:cycles}~(i) or (ii), where  
Figure~\ref{fig:proof:triangle} shows such $C$ or $\Theta$ with thick lines. 
For the case (1), the thick lines show a theta graph $\Theta$ described in Lemma~\ref{lem:odd:cycles}~(ii). For the cases (2) and (3),  the thick lines show  a cycle $C$ described in Lemma~\ref{lem:odd:cycles}~(i).
Note that in each case, the vertices of $T_1$, $T_2$, $T_3$ not on $C$ or $\Theta$ are distinct. 

\begin{figure}[!ht]
\centering
 \includegraphics[page=8,height=3.3cm]{figures.pdf}
\caption{Illustrations for the cases (1)$\sim$(3), where the shaded parts represent the parts containing the end vertices of $R$.}\label{fig:proof:triangle}
\end{figure}

For the case (5), if $b=t_2$, then $P$, $Q$, $ R$  are  three vertex-disjoint paths between $V(T_1)$ and $V(T_2)$, and, if $b=t_3$, then $P[p_1,p_3]$, $Q[q_1,q_3]$, $R$ are  three vertex-disjoint paths between $V(T_1)$ and $V(T_3)$. We reach a contradiction to Lemma~\ref{lem:odd:cycles}~(iii).
 
For the case (4), we can find three vertex-disjoint path between $V(T_1)$ and $V(T_3)$, two of them are $P[p_1,p_3]$ and $Q[q_1,q_3]$ and the other one is the path with thick lines in first two figures of Figure~\ref{fig:proof:triangle:456}.

\begin{figure}[!ht]
\centering
 \includegraphics[page=9,height=3.3cm]{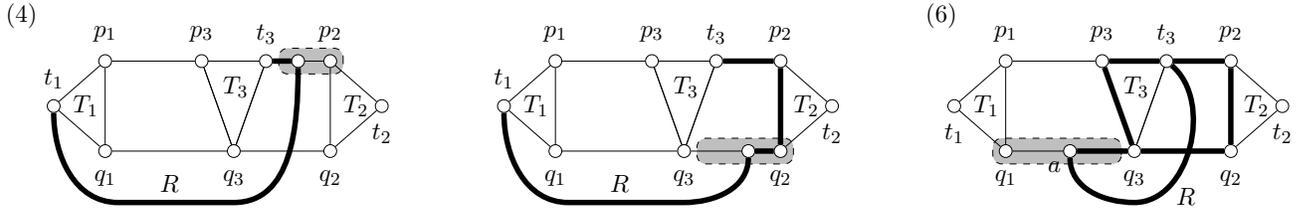}
\caption{Illustrations for the cases (4) and (6), where the shaded part represent the part containing an end vertex of $R$}\label{fig:proof:triangle:456}
\end{figure}

For the case (6), that is, $a\in  A_2$ and $t_3=b$, we will reach a contradiction to Lemma~\ref{lem:previous}~(i) by showing  that there is an even $(t_3,q_3)$-theta graph. 
See the last figure of Figure~\ref{fig:proof:triangle:456}.
It is clear that $t_3p_3q_3$ is a path of length two.
By Lemma~\ref{lem:odd:cycles}~(iii), $P[t_3,p_2]+p_2q_2+\overleftarrow{Q}[q_2,q_3]$ is a $\modfour{0}$-path, and both $\overleftarrow{P}[p_3,p_1]+p_1q_1+Q[q_1,a]+Q[a,q_3]$ and $\overleftarrow{P}[p_3,p_1]+p_1q_1+Q[q_1,a]+R$ are $\modfour{0}$-paths. Therefore $Q[a,q_3]$ and $R$ have the same length modulo $4$, and so $\overleftarrow{R}+Q[a,q_3]$ is an even $(t_3,q_3)$-path.
Hence, the thick lines in the last figure of Figure~\ref{fig:proof:triangle:456} shows an even $(t_3,q_3)$-theta graph.
It completes the proof.
\end{proof}

Some part of the proof is identical to the proof in \cite{0mod4cycle2025}, but included for the sake of completeness.
 
\begin{lemma}\label{lem:55face}
There are at most five $5$-faces in $G$.  
\end{lemma}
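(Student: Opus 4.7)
The plan is to argue by contradiction: assume $G$ contains at least six $5$-faces $F_1, \ldots, F_6$. Since $G$ is $2$-connected and planar, each $F_i$ is a $5$-cycle, so all six are odd cycles in the same residue class modulo $4$ and may be fed into Lemma~\ref{lem:odd:cycles}.

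I first classify how two $5$-faces overlap. Writing the common boundary of $F_i, F_j$ as $m$ maximal vertex-disjoint paths of lengths $k_1, \ldots, k_m$, with ``gap arcs'' of lengths $(a_s), (b_s)$ along the two faces (so $\sum a_s = \sum b_s = 5 - \sum k_s$ and each $a_s, b_s \ge 1$), the symmetric difference of their edge sets is a disjoint union of $m$ cycles of lengths $a_s + b_s$. Forbidding $(0 \bmod 4)$-cycles and parallel edges leaves exactly three possibilities: $m = 0$; $m = 1$ with a shared path of length exactly $2$, whose interior vertex must have degree $2$ in $G$ by planarity; or $m = 2$ with two disjoint shared single edges, which produces two triangles in $G$. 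By Lemma~\ref{lem:3face} the last case can occur for at most one pair of $5$-faces. Since every such adjacency consumes at least two of the five boundary edges of a $5$-face, each $5$-face is edge-adjacent to at most two other $5$-faces, so the ``$5$-face adjacency graph'' $\mathcal{A}$ (vertices the $5$-faces; edges the adjacent pairs) has maximum degree $2$.

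Next I extract three $5$-faces $C_1, C_2, C_3$ on which to run Lemma~\ref{lem:odd:cycles}. With $|V(\mathcal{A})| \ge 6$ and $\Delta(\mathcal{A}) \le 2$, the only configuration in which no three vertices of $\mathcal{A}$ are pairwise nonadjacent is $\mathcal{A} = C_3 \sqcup C_3$; in every other configuration I pick three edge-disjoint $5$-faces directly. In the exceptional case, one $\mathcal{A}$-triangle consists of three $5$-faces pairwise sharing length-$2$ paths, yielding a tight local structure (three degree-$2$ vertices sitting on the shared paths) that I analyze separately. Either way, I obtain three odd cycles $C_1, C_2, C_3$ with $\ell(C_i) \equiv 1 \pmod 4$ together with explicit control of their pairwise overlaps.

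The contradiction then comes from case analysis on the pairwise vertex intersections of $C_1, C_2, C_3$. If they are pairwise vertex-disjoint, Menger's theorem in the $2$-connected graph $G$ yields three vertex-disjoint $(V(C_i), V(C_j))$-paths, contradicting Lemma~\ref{lem:odd:cycles}(ii) or (iii). If pairwise intersections are single vertices, Lemma~\ref{lem:odd:cycles}(iv) forces a common vertex $v$ and forbids a connecting subgraph in $G - v$, yet $2$-connectivity together with the planar embedding delivers one. In mixed configurations one exhibits a cycle or theta graph violating Lemma~\ref{lem:odd:cycles}(i) or (ii). Throughout, Lemma~\ref{lem:no_F_6} restricts every candidate vertex cut --- in particular the endpoints of any length-$2$ shared path of two $5$-faces --- to $K_3$, $F_3$, or $F_4$, ruling out most intermediate configurations. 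The main obstacle, just as in the proof of Lemma~\ref{lem:3face}, is the case-heavy final step in the plane embedding: $5$-cycles interact in more ways than triangles, and the exceptional $m = 2$ configuration has to be handled consistently with the bound $f_3 \le 2$ from Lemma~\ref{lem:3face}. The resulting contradiction with the minimality of $G$ yields $f_5 \le 5$.
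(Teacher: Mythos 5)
Your outline follows the same broad shape as the paper's argument (reduce to three $5$-faces whose pairwise intersections are controlled, then kill each configuration), with a cosmetic difference: you organize the reduction through an edge-adjacency graph of maximum degree $2$, while the paper first shows that \emph{any} two $5$-cycles must meet in a vertex or a path of length $2$ and then applies $r(3,3)=6$. But the proposal has genuine gaps, and one explicit step is false. You claim that for pairwise vertex-disjoint $5$-faces ``Menger's theorem in the $2$-connected graph $G$ yields three vertex-disjoint $(V(C_i),V(C_j))$-paths'': $2$-connectivity only guarantees a separating set of size at least $2$, hence two disjoint paths. To get three you must rule out a $(V(C_i),V(C_j))$-separating set $X$ of size exactly $2$, and this is exactly where the paper uses Lemma~\ref{lem:threecomp} and Lemma~\ref{lem:no_F_6}: each side of such a cut would contain a full $5$-cycle, which cannot live inside $K_3$, $F_3$, or $F_4$. (Also, the lemma you want for two disjoint $5$-cycles joined by three disjoint paths is Lemma~\ref{lem:previous}(ii); Lemma~\ref{lem:odd:cycles}(iii) applies only to triangles.) Note that this separating-set argument is not an optional refinement --- it is how the paper establishes that no two $5$-cycles are vertex-disjoint at all, a fact your adjacency-graph setup quietly needs, since two faces that share no edge may still be disjoint or meet only in vertices.

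The second and larger gap is that the two hard configurations are deferred rather than proved. For three $5$-faces pairwise meeting at a single vertex, your phrase ``$2$-connectivity together with the planar embedding delivers'' the connecting subgraph forbidden by Lemma~\ref{lem:odd:cycles}(iv) is precisely what must be argued: the paper (Claim~\ref{clm:5faceatv}) first shows no $V(C_k)$ can separate the other two cycles minus $v$ --- otherwise planarity produces a $2$-cut $\{u,v\}$ whose sides are too large for Lemma~\ref{lem:no_F_6} --- and only then obtains paths $Q_1,Q_2,Q_3$ and invokes Lemma~\ref{lem:previous}(iii) to force two of them to intersect, creating the subgraph $H$ with $E(H)\cap E(C_i)=\emptyset$ that Lemma~\ref{lem:odd:cycles}(iv) forbids (a subgraph meeting the three cycles is \emph{not} automatic in the required edge-disjoint form). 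For three $5$-faces pairwise sharing a length-$2$ path, the paper (Claim~\ref{clm:5fat3p}) pins down the unique local configuration, deletes the four vertices $v,w,w_1,w_2$, and uses the minimality of $G$ to get $e(G)=e(G_0)+6\le\frac{3n-1}{2}$; your ``tight local structure that I analyze separately'' contains no such argument, and your exceptional $m=2$ adjacency (two shared single edges, two triangles) is neither shown to be realizable without creating a $\modfour{0}$-cycle nor actually absorbed into the analysis. As written, the ``mixed configurations'' sentence and the admission that the case-heavy final step remains the main obstacle mean the decisive contradictions are asserted, not derived.
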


\begin{proof}
Suppose to the contrary that there are two vertex-disjoint $5$-cycles $C_1$ and $C_2$. 
Let $X$ be a smallest $(V(C_1),V(C_2))$-separating set.
Then $|X| \ge 2$ since $G$ is $2$-connected. If $|X| \ge 3$, then by Theorem~\ref{thm:menger}, there are three vertex-disjoint $(V(C_1),V(C_2))$-paths, which is a contradiction to Lemma~\ref{lem:previous}~(ii). 
Thus $|X|=2$.
By Lemma~\ref{lem:threecomp}, $G-X$ has exactly two connected components $S_1$ and $S_2$.
Then each of $G[V(S_1) \cup X]$ and $G[V(S_2) \cup X]$ has either $C_1$ or $C_2$ as a subgraph, which is a contradiction to Lemma~\ref{lem:no_F_6} since there is no $5$-cycle in $K_3$, $F_3$, and $F_4$. 
Therefore $G$ has no vertex-disjoint $5$-cycles.
Since $G$ has no $\modfour{0}$-cycles, every two $5$-cycles intersect at a vertex or a path of length $2$.  

\begin{claim}\label{clm:5faceatv}
There are no three $5$-faces such that every two of them intersect at a vertex.
\end{claim}
\begin{proof}
Suppose that there are three $5$-faces $C_1$, $C_2$, {and} $C_3$ such that every two of them intersect at a vertex.
By Lemma~\ref{lem:odd:cycles}~(iv), $V(C_1) \cap V(C_2) \cap V(C_3)=\{v\}$ for some vertex $v$. 
Let $B_i=C_i-v$ for each $i\in\{1,2,3\}$.
Suppose to the contrary that $V(C_k)$ separates $B_i$ and $B_j$ for some $i,j,k$. We may assume $i=1$, $j=2$, and $k=3$. Since $G$ is a plane graph and $C_3$ is a face, 
there is a vertex $u$ such that by letting $R_1= C_3[v,u]$ and $R_2=C_3[u,v]$,  
for every connected component $H$ of  $G-(V(C_1)\cup V(C_2) \cup V(C_3))$, {either} $H$  does not have a neighbor in $V(R_2)\setminus\{u,v\}$  or
$H$ does not have a neighbor in $V(R_1)\setminus\{u,v\}$.
Therefore $X = \{u,v\}$ is a vertex cut of $G$. 
By Lemma~\ref{lem:no_F_6}, $G-X$ has exactly two connected components. For each connected component $S$ of $G-X$, $G[S \cup X]$ contains $C_1$ or $C_2$ and so it has more than $5$ vertices, which contradicts Lemma~\ref{lem:no_F_6}.

Thus, there is a $(V(B_i), V(B_{i+1}))$-path $Q_i$ in $G-V(C_{i+2})$ for each $i\in\{1,2,3\}$.
By Lemma~\ref{lem:previous}~(iii), we may assume that $Q_1$ and $Q_2$ intersect at a vertex. Then $Q_1\cup Q_2$ makes a connected subgraph $H$ of $G-v$ such that $V(H)\cap V(C_i)\neq \emptyset$ and 
$V(H)\cap E(C_i)=\emptyset$ for each $i\in \{1,2,3\}$, which is a contradiction to Lemma~\ref{lem:odd:cycles}~(iv).
\end{proof}

\begin{claim}\label{clm:5fat3p}
There are no three $5$-faces such that every two of them intersect at a path of length $2$. 
\end{claim}

\begin{proof}
Suppose that there are three $5$-faces $C_1$, $C_2$, and $C_3$ such that every two of them intersect at a path of length $2$. 
Let $uvw$ be the path of length $2$ in which $C_1$ and $C_2$ intersect. We let $C_1:uvw w_1u_1u$ and $C_2:uvw w_2u_2u$. 
Since $C_1$ and $C_3$ intersect at a path of length $2$, without loss of generality, we may assume that $w \in V(C_3)$.
Then it holds that $C_3 : ww_1u_1u_2w_2w$.
See Figure~\ref{fig:fface} for an illustration.

\begin{figure}[!ht] 
\centering
\includegraphics[page=13, height=4.2cm]{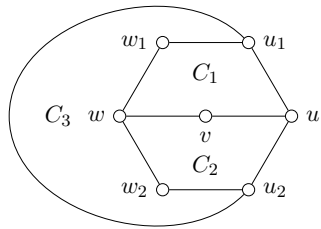}
\caption{An illustration for Claim~\ref{clm:5fat3p}.}\label{fig:fface}
\end{figure} 
Note that $G$ is a plane graph and $C_1$, $C_2$, $C_3$ are faces. Then 
$G_0 = G - \{v, w, w_1, w_2\}$ is $2$-connected. By the minimality of $G$, $e(G_0) \le \frac{3(n-4)-1}{2}$. Then $e(G) = e(G_0)+6 \le \frac{3(n-4)-1}{2}+6 = \frac{3n-1}{2}$, which is a contradiction. 
\end{proof}

Recall that every two $5$-cycles intersect at a vertex or a path of length $2$.  
Note that the Ramsey number $r(3,3) = 6$. 
If $G$ has at least six $5$-faces, then three of them either pairwise intersect at a vertex or pairwise intersect at a path of length two, contradicting Claim~\ref{clm:5faceatv} or Claim~\ref{clm:5fat3p}. 
\end{proof}

By Lemmas~\ref{lem:3face}~and~\ref{lem:55face}, Lemma~\ref{lem:main} holds.

\section{Extremal graphs}

In this section, we discuss how to construct extremal $2$-connected graphs without $\modfour{0}$-cycles. 
We define the \textit{gap} function $gap(G)$ by
$gap(G)= (3n-1)-2e(G)$ for an $n$-vertex graph $G$.
Note that our main theorem says that if $G$ has no $\modfour{0}$-cycles, then $gap(G)\ge 0$.

\begin{proposition}\label{prop:gap:Fi}
Let $G'$ be the graph defined by $(G;x,y)\uplus (F_i;a,b)$ for a graph $(G; x,y)$ and a graph $(F_i;a,b)$ in Figure~\ref{fig:gadget}.
Then $gap(G') < gap(G)$. More precisely,
\[ gap(G')=\begin{cases}
  gap(G)-1 & \text{if }F_i=F_3 \\
  gap(G)-2 &  \text{if }F_i\in \{F_4, F_6 \} \\
  gap(G)-3 &   \text{if }F_i=F_9,
\end{cases}     \qquad\text{and}\qquad  gap(G') \le \begin{cases}
  gap(G)-1 & \text{if }F_i=F_7 \\
  gap(G)-2 &  \text{if }F_i=F_8. 
\end{cases}   \]
\end{proposition}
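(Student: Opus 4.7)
The plan is a direct counting argument. Writing $n=|V(G)|$ and $G'=(G;x,y)\uplus(F_i;a,b)$, the identifications $x\sim a$ and $y\sim b$ in the definition of parallel sum give $|V(G')|=n+|V(F_i)|-2$, and the edge count satisfies $e(G')=e(G)+e(F_i)-s_i$, where $s_i\in\{0,1\}$ records the single multiple edge that the definition of parallel sum deletes; explicitly, $s_i=1$ if and only if both $xy\in E(G)$ and $ab\in E(F_i)$. Subtracting the defining expressions for $gap$ yields
\[ gap(G')-gap(G)=3(|V(F_i)|-2)-2(e(F_i)-s_i)=3|V(F_i)|-2e(F_i)-6+2s_i. \]

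Next I would read off $|V(F_i)|$, $e(F_i)$, and the status of the edge $ab$ for each of the six gadgets. The labels are chosen so that $|V(F_i)|=i$. For the larger gadgets, Proposition~\ref{prop:gadget} already pins down the edge counts via its extremal equalities: $e(F_6)=7$, $e(F_7)=9$, $e(F_8)=11$, $e(F_9)=12$; the small ones are read off the figure as $e(F_3)=2$ and $e(F_4)=4$. For $F_3$, $F_6$, and $F_9$, the respective $(a,b)$-types $\{2\}$, $\{0,3\}$, and $\{2,3\}$ exclude a length-$1$ $(a,b)$-path, so $ab\notin E(F_i)$; the same holds for $F_4$ by inspection of the figure. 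Hence $s_i=0$ in these four cases, and the displayed formula gives exactly $-1,-2,-2,-3$ for $F_3,F_4,F_6,F_9$ respectively, matching the claimed equalities.

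For $F_7$ and $F_8$, the $(a,b)$-types $\{0,1\}$ and $\{1,2\}$ both contain $1$, so $ab\in E(F_i)$ is possible and $s_i$ may be either $0$ or $1$ depending on $G$. Substituting the corresponding edge counts into the formula then gives $-3+2s_i\le -1$ for $F_7$ and $-4+2s_i\le -2$ for $F_8$, which are exactly the two inequalities in the statement. The initial assertion $gap(G')<gap(G)$ is immediate from the per-gadget values.

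The only point that requires care is the bookkeeping of the possibly-shared edge $xy$, and this is handled cleanly by the scalar $s_i$. So there is no real structural obstacle: the proof reduces to the one-line derivation of the displayed formula together with a table-lookup of $|V(F_i)|$, $e(F_i)$, and the presence of $ab$ for the six gadgets in Figure~\ref{fig:gadget}.
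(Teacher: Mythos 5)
Your proposal is correct and follows essentially the same route as the paper's proof: both compute $gap(G')-gap(G)=3d_v-2d_e$ for the parallel sum and account for the single possibly-shared edge $xy=ab$, which yields the exact values for $F_3,F_4,F_6,F_9$ (where the $(a,b)$-type excludes a length-one path, so no edge can be shared) and the inequalities for $F_7,F_8$. The only difference is cosmetic: you make the per-gadget vertex/edge counts and the $s_i$ bookkeeping explicit, whereas the paper states the general bound $3|V(F_i)|-2e(F_i)-6\le 3d_v-2d_e\le 3|V(F_i)|-2e(F_i)-4$ and leaves the table lookup to the reader.
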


\begin{proof} Let $|V(G')|-|V(G)|=d_v$ and $e(G')-e(G)=d_e$. Then $gap(G')=gap(G)+3d_v-2d_e$.
If there is no edge between $a$ and $b$ in $F_i$, then $3d_v-2d_e=3|V(F_i)|-2e(F_i)-6$.
If there is an edge between $a$ and $b$ in both $G$ and $F_i$, then the edge might not be counted in $d_e$ and so 
$3|V(F_i)|-2e(F_i)-6\le 3d_v-2d_e \le 3|V(F_i)|-2e(F_i)-4$. Thus the proposition holds.
\end{proof}

\begin{proposition}\label{prop:gap:path}
For a graph $(G; x,y)$,
let $G'$ be the graph defined by $(G;x,y)\uplus P_4$  or  $(G;x,y)\uplus (F_6;b,c)$, where $c$ is a neighbor of $b$, and let $G''$ be the graph defined by $(G;x,y)\uplus K_3$.
Then $gap(G') = gap(G)$ and $gap(G'')\le gap(G)-1$.
\end{proposition}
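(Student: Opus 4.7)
The plan is to mirror the proof of Proposition~\ref{prop:gap:Fi}: set $d_v := |V(G')|-|V(G)|$ and $d_e := e(G')-e(G)$, so that from the definition of $gap$ we immediately obtain $gap(G')-gap(G) = 3d_v-2d_e$. It then suffices to count $d_v$ and $d_e$ for each of the three parallel sums, keeping track of the multiple-edge deletion rule in the definition of $\uplus$.

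For $(G;x,y)\uplus P_4$: the path $P_4$ contributes $4-2=2$ new vertices and $3$ new edges, and its two ends are non-adjacent in $P_4$, so the parallel sum can never create a multiple edge (independently of whether $xy\in E(G)$). Hence $d_v=2,\ d_e=3$ and $3d_v-2d_e=0$, giving $gap(G')=gap(G)$.

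For $(G;x,y)\uplus (F_6;b,c)$ with $c$ a neighbor of $b$: I would invoke Proposition~\ref{prop:gadget} in its extremal case $n=6,\ L=\{0,3\}$ to record $|V(F_6)|=6$ and $e(F_6)=7$, so that $d_v=4$. Because $bc\in E(F_6)$ by hypothesis, the same dichotomy as in Proposition~\ref{prop:gap:Fi} applies: if $xy\in E(G)$, one multiple edge is deleted, so $d_e=6$ and $3d_v-2d_e=0$, yielding $gap(G')=gap(G)$; if $xy\notin E(G)$, then $d_e=7$ and $3d_v-2d_e=-2$. The stated equality $gap(G')=gap(G)$ corresponds to the first case, which is the setting of Section~4, where $(F_6;b,c)$ is used to ``replace'' an existing edge $xy$ of $G$.

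For $(G;x,y)\uplus K_3$: the triangle contributes $3-2=1$ new vertex and $3$ edges, with the two specified vertices adjacent, so $d_v=1$ and $d_e=3-[xy\in E(G)]\in\{2,3\}$. Therefore $3d_v-2d_e\in\{-1,-3\}$, and in either case $gap(G'')\le gap(G)-1$.

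The whole argument is pure bookkeeping; no cycle-length or connectivity reasoning is needed. The only care required, exactly as in the proof of Proposition~\ref{prop:gap:Fi}, is tracking when the identified edge $xy$ duplicates the edge $bc$ (in the $F_6$ case) or the specified edge of $K_3$, causing one copy to be removed.
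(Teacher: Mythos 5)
Your argument is the same pure bookkeeping as the paper's own proof, which simply computes $gap(G')-gap(G)=3\bigl(|V(G')|-|V(G)|\bigr)-2\bigl(e(G')-e(G)\bigr)$ for each of the three parallel sums; your $P_4$ and $K_3$ cases match it verbatim (the paper writes $e(G'')-e(G)\ge 2$ for the $K_3$ case, exactly your dichotomy). The only divergence is the $(F_6;b,c)$ case: the paper flatly asserts $e(G')-e(G)=6$, which tacitly assumes the edge $bc$ of $F_6$ gets identified with an existing edge $xy$ of $G$, whereas you make the multiple-edge dichotomy explicit and correctly note that if $xy\notin E(G)$ then the edge increment is $7$ and $gap(G')=gap(G)-2$, so the stated equality only holds when $xy\in E(G)$ — a more careful reading than the paper's one-liner. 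Be aware, though, that your closing justification ("this is the setting of Section 4") is not literally guaranteed: procedure (R2) only requires $(x,y)$ to be a $\{1,2\}$-type pair, not that $xy\in E(G)$; this does not harm the applications, since the construction of gap-reducing sequences only ever needs $gap(G')\le gap(G)$, which holds in both of your subcases.
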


\begin{proof}
If $G'=(G;x,y)\uplus P_4$, then 
$|V(G')|-|V(G)|=2$ and $e(G')-e(G)=3$, and so $gap(G')=gap(G)+0$.
If $G'= (G;x,y)\uplus (F_6;b,c)$, then 
$|V(G')|-|V(G)|=4$ and $e(G')-e(G)=6$, and so $gap(G')=gap(G)+0$.
For  $G''=(G;x,y)\uplus K_3$, 
$|V(G'')|-|V(G)|=1$ and $e(G'')-e(G)\ge 2$, and so $gap(G')\le gap(G)-1$.
\end{proof}

For simplicity, we let $L_{i}=\{i,i+1\}$ for each $i\in\{0,1,2\}$, and $L_3=\{0,3\}$. We say $L_0$ and $L_1$ are {\textit{matched}} to each other, and $L_2$ and $L_3$ are {\textit{matched}} to each other. Recall that $(a,b)$ is an $L_3$-type in $F_6$, an  $L_0$-type in $F_7$, an $L_1$-type in $F_8$, and an $L_2$-type in $F_4$ and $F_9$. Moreover, if $c$ is a neighbor of $b$ in $F_6$, then $(b,c)$ is an $L_0$-type in $F_6$.

Let $\sigma: G_0, \ldots, G_k$ be a sequence of graphs without $\modfour{0}$-cycles. We call $\sigma$ a \textit{gap-reducing sequence} if $gap(G_s)\ge gap(G_{s+1})$ for each $0\le s\le k-1$, and $\sigma$ is said to be \textit{strict} if $gap(G_s)> gap(G_{s+1})$ for each $0\le s\le k-1$.

Now we explain a way to obtain a (strict) gap-reducing sequence  $\sigma: G_0,\ldots,G_k$ of  graphs without $\modfour{0}$-cycles. We take a graph $G_0$
without $\modfour{0}$-cycles. Then $G_{s+1}$ is obtained from $G_s$ by one of the following procedures (R1), (R2), and (R3).
\begin{itemize}
\item[(R1)] Find two vertices $x$ and $y$ such that $(x,y)$ is an  $L_i$-type in $G$ for some $i\in \{0,1,2,3\}$. Let $L_j$ be the set matched to $L_i$.
Then we define $G_{s+1}$ by $(G_s;x,y)\uplus (H;a',b')$, where $(a',b')$ is an $L_j$-type in $H$ and either $H=K_3$ or $(H;a',b')=(F_t;a,b)$ in Figure~\ref{fig:gadget}. 
\item[(R2)] Find two vertices $x$ and $y$ such that 
$(x,y)$ is an $L_1$-type in $G$. 
Then we define {$G_{s+1}$} by $(G_s;x,y)\uplus (F_6;b,c)$, where $c$ is a neighbor of $b$ in Figure~\ref{fig:gadget}.
\item[(R3)] Find two vertices $x$ and $y$ such that 
 $x$ and $y$ are not connected by a $\modfour{1}$-path in $G$. 
Then we define {$G_{s+1}$} by $(G_s;x,y)\uplus P_{4}$.
\end{itemize}
By Propositions~\ref{prop:gap:Fi}~and~\ref{prop:gap:path}, it is clear that every sequence obtained from this way is a gap-reducing sequence of graphs without $\modfour{0}$-cycles. If we conduct only procedure (R1), then the resulting sequence is a strict gap-reducing sequence. In addition, if $G_0$ is $2$-connected, then every graph in the sequence is also $2$-connected. Figure~\ref{fig:tight:exmaple} shows an example, which starts with a $5$-cycle $G_0$. 

\begin{figure}[ht]
\centering 
\includegraphics[page=12,width=16cm]{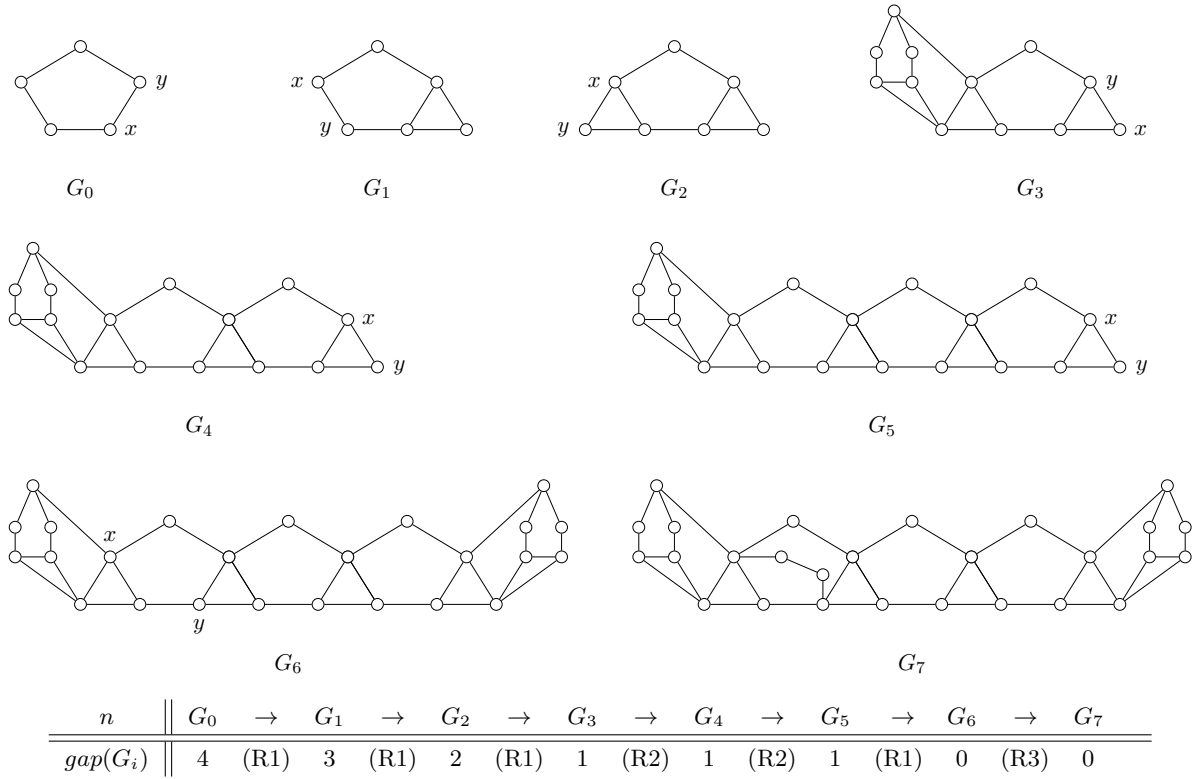}\\
\centering
\footnotesize{\begin{tabular}
{c||c c c c c c c c c c c c c c c c c}
$n$&$G_0$&\!\!$\rightarrow\!\!$ &$G_1$ &$\!\!\rightarrow\!\!$ &$G_2$&$\!\!\rightarrow\!\!$ &$G_3$&$\!\!\rightarrow\!\!$ &$G_4$&$\!\!\rightarrow\!\!$ &$G_5$&$\rightarrow\!\!$ &$G_6$ &$\!\!\rightarrow\!\!$ &$G_7$\\ \hline    \hline  
$gap(G_i)$& 4&\!\!(R1)\!\!& 3 &\!\!(R1)\!\!&2 &\!\!(R1)\!\!&1 &\!\!(R2)\!\!&1&\!\!(R2)\!\!&1&\!\!(R1)\!\!&0&\!\!(R3)\!\!&0&  \\ 
\end{tabular} }\\ 
\caption{A gap-reducing sequence $\sigma: G_0, G_1, \ldots,G_7$ by the procedures (R1), (R2), and (R3)}\label{fig:tight:exmaple} 
\end{figure}
 
As one can observe from the example in Figure~\ref{fig:tight:exmaple}, we can construct infinitely many extremal $2$-connected graphs without $\modfour{0}$-cycles. In addition to the procedures (R2) and (R3), we can find other graph constructions that preserve the value of the gap function, and these can be used to produce extremal examples with diverse structures.

Here are some other examples. See Figure~\ref{fig2:tight:exmaple2} for an illustration. Consider a graph $G$ obtained from $G_0=(F_6;a,b)$ by applying the procedure (R1) with $(F_4;a,b)$ once and then by applying the procedure (R3) with $k$ times at the vertices $a$ and $b$. Then the resulting graph $G$ has $8+2k$ vertices and $11+3k$ edges. 
We also consider a graph $H$ obtained from $G_0=K_3$ by applying the procedure (R1) with $(F_7;a,b)$ twice at different edges of $K_3$, and then by applying the procedure (R3) with $k$ times at the vertices $a$ and $b$. Then the resulting graph $H$ has $13+2k$ vertices and $19+3k$ edges. 
This implies that for every $n\ge 12$, there is a $2$-connected $n$-vertex $G$ without $\modfour{0}$-cycles
such that $e(G)= \lfloor \frac{3n-1}{2} \rfloor$.

\begin{figure}[ht]
\centering 
\includegraphics[page=14,width=12.5cm]{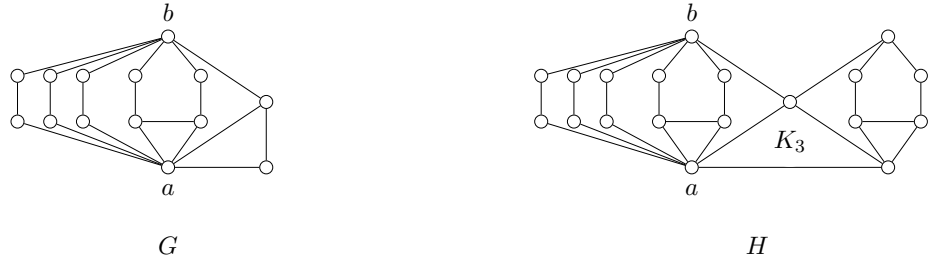}\\
\caption{Tight examples}
\label{fig2:tight:exmaple2} 
\end{figure}

In \cite{0mod4cycle2025}, the authors construct an $n$-vertex graph $G$ 
without  $\modfour{0}$-cycles such that 
$e(G)=\left\lfloor \frac{19(n -1)}{12} \right\rfloor$ as follows. 
They also denote some graphs reversing-equivalent to the graphs $F_8$ and $(F_7; a,b) \uplus (F_8; a,b)$ by $L_8$ and $L_{13}$, respectively. Extremal examples for general graphs are obtained by performing one-vertex identifications on multiple copies of  {$L_{13}$, $L_8$, $K_3$, and $K_2$}. According to our analysis in this paper, their extremal examples are essential in a certain sense.
Let $G$ be an $n$-vertex graph with exactly $\left\lfloor \frac{19(n -1)}{12} \right\rfloor$ edges that contains no $\modfour{0}$-cycles.
If $n$ is sufficiently large, then $
\left\lfloor \frac{19(n -1)}{12} \right\rfloor > \left\lfloor \frac{3n - 1}{2} \right\rfloor$,  
which implies that no block of $G$ can contain a large number of vertices.

\section*{Acknowledgements} 
Hojin Chu was supported by a KIAS Individual Grant (CG101801) at Korea Institute for Advanced Study.
Boram Park and Homoon Ryu were supported by the National Research Foundation of Korea(NRF) grant funded by the Korea government(MSIT) (No. RS-2025-00523206).

\section*{Appendix
%\footnote{In accordance with the reviewer's suggestion, the authors are open to either providing a more detailed version of the proof or omitting it entirely.} 
}
We provide a sketch of the proof of Proposition~\ref{prop:gadget}. Some cases are written in a rough way, as they are straightforward yet tedious to check. Due to the large number of such cases, verification is provided via code.
See the supplementary repository: \url{https://github.com/Homoon-ryu/2con_0mod4-cycle_free}.
 
\begin{proof}[Proof of Proposition~\ref{prop:gadget}]
For simplicity, let $t(n)$ be the target function defined as follows:
\[t(n)=\begin{cases}
\frac{3n-4}{2} & \text{if } L=\{0,3\} \text{ and }  n \le 6;\\
\frac{3n-3}{2} & \text{if } L=\{0,1\} \text{ and }   n \le 7;\\
\frac{3n-2}{2} & \text{if } L=\{1,2\} \text{ and }   n \le 8;\\
\frac{3n-3}{2} & \text{if } L=\{2,3\} \text{ and }   n \le 9.
\end{cases}
\]

We have the following Table~\ref{table1}, where each cell in the last four rows gives the target upper bound for $e(H)$. 
That is, if $t(n)$ is an integer, each cell gives $t(n)-1$ except for the cases of equality parts, and each cell has a value for $t(n)$ otherwise.

\renewcommand{\arraystretch}{1.2}
\begin{table}[!ht] 
\centering\begin{tabular}
{c||C{0.7cm}|C{0.7cm}|C{0.7cm}|c|c|c|c|c}
$n$&3&4 &5&6 &7&8&9 \\ \hline \hline 
$\left\lfloor \frac{19(n -1)}{12}\right\rfloor$& 3&4 &6&7 &9&11&12    \\ \hline \hline 
 $L=\{0,3\}$& $\frac{5}{2}$  & 3 &$\frac{11}{2}$& 7  {\bf{[$F_6$]}} &   \cellcolor[gray]{0.9}&   \cellcolor[gray]{0.9}&   \cellcolor[gray]{0.9}   \\ \hline
 $L=\{0,1\}$& 2 &$\frac{9}{2}$ & 5 &$\frac{15}{2}$ & 9  {\bf{[$F_7$]}}&\cellcolor[gray]{0.9} &   \cellcolor[gray]{0.9}  \\
\hline
 $L=\{1,2\}$& $\frac{7}{2}$& 4 &$\frac{13}{2}$& 7 &$\frac{19}{2}$&11 {\bf{[$F_8$]}}&\cellcolor[gray]{0.9}    \\
\hline
 $L=\{2,3\}$&2 &$\frac{7}{2}$&5  &$\frac{15}{2}$& 8 &$\frac{21}{2}$  &12 {\bf{[$F_9$]}}   \\ \hline
\end{tabular}
\caption{The target value $t(n)-1$ or $t(n)$, where the graphs in squared brackets are only tight examples (up to reversing-equivalent relation)}\label{table1}
\end{table}

\renewcommand{\arraystretch}{1}

If $n \le 4$, then it is easy to check. Suppose that $n=5$ and $e(H)=6$. If $H$ is Hamiltonian, then the chord of a Hamiltonian cycle makes a cycle of length four. Thus $H$ is not a Hamiltonian, and so the longest cycles of $H$ is a triangle. Since $e(H)=6$, $H$ has at least two triangles sharing one vertex. Then $(x,y)$ is a $\{2,3,4\}$-type, which is a contradiction. Thus the column for $n=5$ in the Table~\ref{table1} holds.

Now we assume that $n\ge 6$.
Note that $(x,y)$ is an $L$-type with $|L| = 2$ in $H$. 
If there is no $(x,y)$-path of length $\modfour{\ell}$ for some $\ell\in L$, then it is easy to see that $H$ is bipartite, so $e(H) \le \frac{3n-6}{2}$ by Lemma~\ref{thm:planar}~(iii).  
Therefore we assume that there are both an $(x,y)$-path of length $\modfour{\ell_1}$ and an $(x,y)$-path of length $\modfour{\ell_2}$ where $L=\{\ell_1,\ell_2\}$. To complete the proof, by Table~\ref{table1}, it remains to show the following cases, where $\{a,b\}=\{x,y\}$: 
\begin{itemize}
\item[(A1)] If $n=6$, $L = \{0,3\}$, and $e(H)=7$, then $H$  is reversing-equivalent to $F_6$.
\item[(A2)] If $n=7$, $L=\{0,1\}$, and $e(H)=9$, then $H$ is reversing-equivalent  to $F_7$.
\item[(A3)] If $n= 8$, $L = \{1, 2\}$, and $e(H) =11$, then $H$  is reversing-equivalent to $F_8$. 
\item[(A4)] If $n=7$ and $L = \{2, 3\}$, then $e(H) \le 8$.
\item[(A5)] If $n=9$, $L = \{2, 3\}$, and $e(H) =12$, then $H$ is reversing-equivalent to $F_9$.
\end{itemize}

We use induction on $n$, that is, all the cases  of Table~\ref{table1} hold when $|V(H)|<n$ for some $n\ge 6$. 

Suppose that $H$ has a pendent edge $e$ and $L=\{i,i+1\}$. Then $e$ must be incident with $x$ or $y$, say $e=xz$, then $H-x$ has less vertices and $(y,z)$ satisfies the same condition of the proposition such that $(y,z)$ is an $\{i-1,i\}$-type in $H-x$. 
Then $e(H)\le e(H-x)+1$. 
By the induction hypothesis and Table~\ref{table1}, it cannot be one of the cases (A1), (A2), and (A3).
The case (A4) holds since $e(H)\le e(H-x)+1\le 7+1$. 
The case (A5) holds,  since $12=e(H)=e(H-x)+1=11+1$ and $H-x$ is reversing-equivalent to $F_8$ by (A3), which implies that $H$ is reversing-equivalent to $F_{9}$.

We suppose that $\delta(H)\ge 2$. 
Suppose that there are a cycle $C_x$ containing $x$ and a cycle $C_y$ containing $y$ such that $y\not\in V(C_x)$, $x\not\in V(C_y)$, and 
$|V(C_x)\cap V(C_y)|\le 1$. Then $\ell(C_x)+\ell(C_y)\le n-1$.
If both $\ell(C_x)$ and $\ell(C_y)$ are odd, then there are three $(x,y)$-paths of different lengths modulo $4$ by Table~\ref{table:basic:1201/2303}.  
Since $n\le 9$, one of $C_x$ or $C_y$ is a triangle and the other has even length, and this length must be $6$. Say $C_x$ is a triangle. It must be $n\ge 8$. Then it is not difficult to check that if $n=8$ then $(x,y)$ cannot be a $\{1,2\}$-type in $H$, and if $n=9$ then $(x,y)$ cannot be a $\{2,3\}$-type in $H$.
Thus each case of (A1)$\sim$(A5) is vacuously true.
Therefore, we assume that for a cycle $C_x$ containing $x$ and a cycle $C_y$ containing $y$ such that $|V(C_x)\cap V(C_y)|\le 1$,
either $x\in V(C_y)$ or $y\in V(C_x)$ (and therefore $|V(C_x)\cap V(C_y)|=1$ and the vertex in $V(C_x)\cap V(C_y)$ is $x$ or $y$).
Since $\delta(H)\ge 2$, $x$ and $y$ are on a same cycle $C$. Take such $C$ as a longest one. Then $\ell(C)\ge 5$, since $G$ has no $4$-cycle.   
Let $C:v_1v_2\cdots v_{\ell(C)}v_1$. The following collects some observations.
\begin{itemize}
\item If $\ell(C)=5$, then by the maximality of $\ell(C)$, every vertex not on $C$ has at most one neighbor in $C$.
\item  If $\ell(C)\in\{6,7\}$, then every vertex $z$ not on $C$ has at most two neighbors in $C$ and if $z$ has two neighbors, then they are $v_i$ and $v_{i+3}$ for some $i$, where the subscripts are taken modulo $\ell(C)$.

\end{itemize}  

Suppose the case (A1), that is, $n=6$, $L = \{0,3\}$, and $e(H)=7$.
Then $\ell(C)=6$ and $C$ has only one chord, which implies that $H$ is reversing-equivalent to $F_6$.

Suppose the case (A2), that is, $n=7$, $L=\{0,1\}$, and $e(H)=9$. 
Since $L=\{0,1\}$, $\ell(C)\in\{5,6\}$. 
Suppose that $\ell(C)=5$. Then two vertices not on $C$ form an ear of length three, and so $e(H)\le 8$. 
Suppose that $\ell(C)=6$. Then we may assume that $x=v_1$ and $y=v_2$, since $L=\{0,1\}$.
By Table~\ref{table1}, $H[V(C)]$ has $6$ vertices and at most $7$ edges. The vertex $z$ in $V(H)-V(C)$ has at most two neighbors in $C$. Since $e(H)=9$, $H[V(C)]$ has exactly $7$ edges and $z$ is adjacent to $v_i$ and $v_{i+3}$ for some $i$. Then the only possible case is that
$H$ is reversing-equivalent to $F_7$.
 
Suppose the case (A3), that is, $n = 8$, $L=\{1,2\}$, and $e(H)=11$. 
Then $\ell(C)\in\{6,7\}$, since $(x,y)$ is a $\{1,2\}$-type in $H$. 
Suppose that $\ell(C)=6$. Then we may assume that $x=v_1$ and $y=v_2$.
Thus $C$ has at most one chord and a vertex not on $C$ has at most one neighbor in $C$.
Therefore the vertices not on $C$ form an ear of length three. 
Then $e(H)\le 6+1+3=10$.
Suppose that $\ell(C)=7$. Since $e(H[V(C)])\le 9$ by Table~\ref{table1}, $e(H)= 11$ implies that $e(H[V(C)])= 9$ and the vertex $z$ not on $C$ has exactly two neighbors in $C$. 
By (A2), $H[V(C)]$ is reversing-equivalent to $F_7$. 
Then $H$ is reversing-equivalent to $F_8$. 

To check the cases (A4) and (A5), suppose that $L=\{2,3\}$.
Since there is no $7$-cycle containing both $x$ and $y$,  $\ell(C)\in \{5,6,9\}$. 
Suppose that $\ell(C)=5$. Then we may assume that $x=v_0$ and $y=v_2$. 
Thus $C$ has no chord and each ear of $C$ must have ends $x$ and $y$, which is a contradiction to the maximality of $C$.
Suppose that $\ell(C)=6$. Then we may assume that $x=v_0$ and $y=v_3$.  Thus a vertex not on $C$ has at most two neighbors in $C$ and, if it has two, the neighbors must be $x$ and $y$. Therefore there is at most one vertex with two neighbors in $C$, since $H$ has no $4$-cycle.
If $n=7$, then the neighbors of the vertex not on $C$ must be $x$ and $y$, and so $C$ has no chord and $e(H)\le 6+2=8$, which implies that (A4) holds. If $n=9$, then by the maximality of $C$, an ear of $C$ has length at most three, which implies that $e(H)\le 6+3+2=11$.
Suppose that $\ell(C)=9$. Then $n=9$ and we may assume that $x=v_0$ and $y\in\{v_2,v_3\}$. 
Similar to the previous case, by tedious case checking, we can show that $C$ cannot have three chords, and so $e(H)\le 11$. 
\end{proof}
\end{document}